\date{\today}
\newcommand{\bbD}{{\mathbb{D}}}
\newcommand{\bbR}{{\mathbb{R}}}
\newcommand{\bbZ}{{\mathbb{Z}}}
\newcommand{\bbC}{{\mathbb{C}}}
\newcommand{\cA}{{\mathcal{A}}}
\newcommand{\cB}{{\mathcal{B}}}
\newcommand{\cE}{{\mathcal{E}}}
\newcommand{\cF}{{\mathcal{F}}}
\newcommand{\cJ}{{\mathcal{J}}}
\newcommand{\cO}{{\mathcal{O}}}
\newcommand{\cS}{{\mathcal{S}}}
\newcommand{\fA}{{\mathfrak{A}}}
\newcommand{\fK}{{\mathfrak{K}}}
\newcommand{\fa}{{\mathfrak{a}}}
\newcommand{\fj}{{\mathfrak{j}}}
\newcommand{\fv}{{\mathfrak{v}}}
\newcommand{\fw}{{\mathfrak{w}}}
\newcommand{\fz}{{\mathfrak{z}}}
\newcommand{\bC}{{\mathbf{C}}}
\newcommand{\ba}{{\mathbf{a}}}
\newcommand{\bb}{{\mathbf{b}}}
\newcommand{\bc}{{\mathbf{c}}}
\newcommand{\e}{{\epsilon}}
\renewcommand{\k}{\varkappa}
\newcommand{\z}{\zeta}
\newcommand{\vp}{{\vec{p}}}
\newcommand{\vq}{{\vec{q}}}
\newcommand{\oc}{\overset{\circ}}
\newcommand{\is}{\mathcal{IS}_E}
\renewcommand{\Im}{\text{\rm Im}\,}
\newcommand{\GSMP}{\text{\rm GSMP}}
\newcommand{\KS}{\text{\rm KS}}
\newcommand{\KSA}{\text{\rm KSA}}
\newcommand{\tr}{\text{\rm tr}\,}
\newcommand{\dist}{\text{\rm dist}}
\newcommand{\Res}{\text{\rm Res}\,}
\allowdisplaybreaks \numberwithin{equation}{section}
\newtheorem{theorem}{Theorem}[section]
\newtheorem{lemma}[theorem]{Lemma}
\newtheorem{proposition}[theorem]{Proposition}
\newtheorem{corollary}[theorem]{Corollary}
\theoremstyle{definition}
\newtheorem{definition}[theorem]{Definition}
\newtheorem{notation}[theorem]{Notation}
\newtheorem{remark}[theorem]{Remark}
\date{\today}
\title
{Killip-Simon problem and Jacobi flow on GSMP matrices}
\author{B. Eichinger and P. Yuditskii\thanks{Supported by the Austrian Science Fund FWF, project no: P22025-N18.}}
\begin{document}

\maketitle
\begin{abstract}
One of the first and therefore most important theorems in perturbation theory claims that for an arbitrary self-adjoint operator $A$ there exists a perturbation $B$ of Hilbert-Schmidt class with arbitrary small operator norm, which destroys completely the absolutely continuos (a.c.) spectrum of the initial operator $A$ (von Neumann). However, if $A$ is the discrete free 1-D Schr\"odinger operator and $B$ is an arbitrary Jacobi matrix (of Hilbert-Schmidt class) the a.c. spectrum remains perfectly the same, that is, the interval $[-2,2]$. Moreover, Killip and Simon described explicitly  the spectral properties for such $A+B$. Jointly with Damanik they generalized this result to the case of perturbations of periodic Jacobi matrices in the non-degenerated case. Recall that the spectrum of a periodic Jacobi matrix is a system of intervals of a very specific nature. Christiansen, Simon and Zinchenko posed in a review dedicated to F. Gesztesy (2013)
 the following question: ``is there an extension of the Damanik-Killip-Simon theorem to the general finite system of intervals case?" In this paper we solve this problem completely. Our method deals with the Jacobi flow on GSMP matrices. GSMP matrices are probably  a new object in the spectral theory. They form a certain Generalization of matrices related to the Strong Moment Problem, the latter ones are a very close relative of Jacobi and CMV matrices. The Jacobi flow on them is also a probably new member of the rich family of integrable systems. Finally, related to Jacobi matrices of Killip-Simon class, analytic vector bundles and their curvature play a certain role in our construction and, at least on the level of ideology, this role is quite essential.
\end{abstract}

\section{Introduction}

\subsection{Main result}

\noindent
\textit{(1) Von Neumann Theorem} \cite{vN} states that for an arbitrary self-adjoint operator $A$, having a nontrivial absolutely continuous (a.c.) component of the spectrum, there exists a self-adjont perturbation $\delta A$ of Hilbert-Schmidt class  such that $A+\delta A$ has a pure point spectrum. Moreover, $\delta A$ may have
 an arbitrary small operator norm.

Therefore, the following result is already quite non-trivial.

\noindent
\textit{(2) Deift-Killip Theorem} \cite{DK}. For a discrete one-dimensional Schr\"odinger operator with square summable potential, the absolutely continuous part of the spectrum is $[-2,2]$.

Thus, under a special perturbations of Hilbert-Schmidt class (the square summable potential) the absolutely continuous spectrum of the free, discrete 1-D Schr\"odinger operator  is perfectly preserved. 
It is totally surprising that one can find a \textit{complete explicit characterization} of the spectral data if the perturbation is an arbitrary  Jacobi matrix of Hilbert-Schmidt class.

\noindent
\textit{(3) Killip-Simon Theorem} \cite{KS}. Let $d\sigma$ be a probability measure  on $\bbR$ with bounded but infinite support. As it is well known the orthonormal polynomials $P_n(x)$ with respect to this measure obey a three-term recurrence relation
\begin{equation}\label{eq1}
xP_n(x)=a(n)P_{n-1}(x)+b(n)P_n(x)+a(n+1) P_{n+1}(x), \quad a(n)>0.
\end{equation}
The following are equivalent:
\begin{itemize}
\item[(op)] $\sum_{n\ge 1}|a(n)-1|^2<\infty$ and $\sum_{n\ge 0}|b(n)|^2<\infty$.
\item[(sp)] The measure $d\sigma$ is supported on $[-2,2]\cup X$, and moreover
\begin{equation}\label{eq2}
\int_{-2}^2|\log\sigma'(x)|\sqrt{4-x^2}dx+\sum_{x_k\in X}\sqrt{x_k^2-4}^3<\infty.
\end{equation}
\end{itemize}
\begin{remark}
Of course the (op)-condition means that the Jacobi matrix
$$
J_+=\begin{bmatrix}
b(0)&a(1)& & \\
a(1)&b(1)& a(2)& \\
 &\ddots&\ddots&\ddots
\end{bmatrix}
$$
represents a Hilbert-Schmidt class perturbation of the matrix $\oc J_+$ with the constant coefficients 
$\oc a(n)=1$ and $\oc b(n)=0$. In this case we consider $J_+$ as an operator acting in the standard space of one-sided sequences $\l^2_+$. In its turn, the (sp)-condition means that the related \textit{spectral measure} $d\sigma$ has an absolutely continuous component supported on $[-2,2]$. Moreover, the spectral density $\sigma'(x)$ with respect to the Lebesgue measure satisfies an explicitly given integral condition, which in particular means that $\sigma'(x)\not=0$ a.e. on this interval. Besides that, the measure may have at most countably many mass points (the set $X$) outside of the given interval. Again, the corresponding set $X$ satisfies an explicitly given condition, which in particular means that the only possible accumulation points of this set are the endpoints $\pm 2$. Finally, note that there is no restriction on the \textit{singular component} of the measure $d\sigma$ on the interval $[-2,2]$.
\end{remark}

Later, also in a paper, which was published in Annals, the authors jointly with David Damanik generalized their result on the case of perturbations of \textit{periodic}  Jacobi matrices. To state this theorem we need a couple of definitions.

We define a distance between two one-sided sequences $b=\{b(n)\}_{n\ge 0}$ and
$\tilde b=\{\tilde b(n)\}_{n\ge 0}$ from $\l_+^\infty$ by
\begin{equation}\label{eq3}
\dist^2(b,\tilde b)=\dist_{\eta}^2(b,\tilde b):=\sum_{n\ge 0}|b(n)-\tilde b(n)|^2\eta^{2n}, \quad \eta\in(0,1).
\end{equation}
The distance $\dist (J_+,\tilde J_+)$ between two Jacobi matrices is defined via the distances between the generating coefficient sequences. 

Let $J(E)$ be the isospectral set of periodic two-sided Jacobi matrices with a given spectral set $E\subset\bbR$.
 The distance between $J_+$ and $J(E)$ is defined in a standard way
\begin{equation}\label{eq4}
\dist(J_+,J(E))=\inf\{\dist(J_+,\oc J_+):\ \oc J\in J(E)\},
\end{equation}
where $\oc J_+$ is the restriction of a two-sided matrix $\oc J$ on the positive half-axis.

\noindent
\textit{(4) Damanik-Killip-Simon Theorem} (DKST) \cite{KSDp}. Assume that $J_+$ is a Jacobi matrix and let $d\sigma$ be the associated spectral measure.
The following are equivalent:
\begin{itemize}
\item[(opp)]  Let $S_+$ denote the shift operator in $\l^2_+$. Then
\begin{equation}\label{opp}
\sum_{n\ge 0}\dist^2((S_+^*)^n J_+S_+^n, J(E))<\infty.
\end{equation}
\item[(spp)] The measure $d\sigma$ is supported on $E\cup X$, and moreover
\begin{equation}\label{eq5}
\int_{E}|\log\sigma'(x)|\sqrt{\dist(x,\bbR\setminus E)}dx+\sum_{x_k\in X}\sqrt{\dist(x_k, E)}^3<\infty.
\end{equation}
\end{itemize}

\begin{remark}
Note that \eqref{opp}  means that the shifts of the given Jacobi matrix $J_+$ approach to the isospectral set $J(E)$, but possibly not to a specific element $\oc J$ of this set. In the same time  \eqref{eq5} looks as a  straightforward counterpart of the condition \eqref{eq2}.
\end{remark}
\begin{remark}
Let us point out that the spectral set of any periodic two-sided Jacobi matrix $\oc J$ is a system of interval of a very special nature: the  system of intervals $E=[\bb_0,\ba_0]\setminus\cup_{j=1}^g(\ba_j,\bb_j)$ represents the spectrum of a periodic Jacobi matrix if and only if $E=T_m^{-1}([-2,2])$, where  $T_m(z)$ is a polynomial   with only real critical points, that is,
$$
T_m'(c)=0 \ \text{for} \ c\in\bbR,
$$
and its critical values $T_m(c)$ obey the conditions $|T_m(c)|\ge 2$. Actually, the Damanik-Killip-Simon Theorem was proved under an additional regularity condition $|T_m(c)|>2$ for all critical points $c$. In this case the degree $m=g+1$.
\end{remark}

The paper \cite{jreview} reviews recent progress in the understanding of the class of so called  \textit{finite gap} Jacobi matrices and their perturbations. In the end of the article the authors posed the following question: ``Is there an extension of the Damanik-Killip-Simon theorem to the general finite system of intervals $E$ case?"  In the present paper \textit{we solve completely this problem}, see Theorem \ref{mainhy} below.

Finite gap Jacobi matrices were discovered in 
%a connection with asymptotics of orthogonal polynomials with respect to measures supported on a system of intervals 
the context of approximation theory
\cite{AKH, Akh60}, \cite[Chapter X]{AKHef}. They became especially famous because of their relation with the theory of integrable systems, for  historical comments we would refer to \cite{MaT} with many references therein. But the true meaning of this class was significantly clarified recently by C. Remling (in a paper, which  was also published in Annals): for a system of intervals $E$ the finite gap class $J(E)$ consists of all limit points of Jacobi matrices with an essential spectrum on $E$, having this $E$ as the support of their a.c. spectrum.

\noindent
\textit{(5) Remling Theorem} \cite{REMA11}. Let $E$ be a system of intervals.
Let $J_+$ be a Jacobi matrix with the generating coefficient sequences $\{a(n),b(n)\}$ such that its spectrum $\sigma(J_+)=E\cup X$, where $X$ is a set of isolated points, which accumulate only to the endpoints of the intervals, and $\sigma'(x)\not=0$ for a.e. $x\in E$. If 
$$
\oc a(n)=\lim_{m_k\to +\infty} a(n+m_k),\quad \oc b(n)=\lim_{m_k\to +\infty} b(n+m_k),
$$
for all $n\in\bbZ$, then the corresponding two-sided Jacobi matrix $\oc J$ belongs to $J(E)$.

Note that the system of shifts $\{(S_+^*)^nJ_+S_+^n\}_{n\ge 0}$ forms a precompact set in the compact-open topology (generated by the distance \eqref{eq3}).

For $E=[\bb_0,\ba_0]\setminus\cup_{j=1}^g(\ba_j,\bb_j)$ the class $J(E)$ represents a $g$-dimensional torus, which can be parametrized explicitly.

\noindent
\textit{(6) Baker-Akhiezer parametrization for the class $J(E)$}, see e.g. \cite[Theorem 9.4]{GT}.
For $\alpha\in \bbR^g/\bbZ^g$ let 
\begin{eqnarray}
\cA(\alpha)&=&\bar a^2\frac{\theta(\alpha+\mu+\bar\alpha)\theta(\alpha-\mu+\bar\alpha)}{\theta(\alpha+\bar\alpha)^2},
\label{aal}\\
\cB(\alpha)&=&\bar b+\partial_\xi\ln\frac{\theta(\alpha-\mu+ \bar\alpha)}{\theta(\alpha+\bar\alpha)}\label{bal},
\end{eqnarray}
 where 
 $$
 \theta(z)=\theta(z,\Omega)=\sum_{n\in \bbZ^g}e^{\pi i\langle \Omega n,n \rangle+2\pi i\langle z,n \rangle}, \quad z\in \bbC^g,
 $$
 with the following system of parameters depending on $E$: 
 \begin{itemize}
 \item $\Omega$ is a symmetric $g\times g$ matrix with a positive imaginary part, $\Im \Omega >0$; 
 \item $\bar\alpha\in\bbC^g$ is an appropriate shift; 
 \item $\mu\in\bbR^g/\bbZ^g$ and $\xi\in\bbR^g$ are certain fixed directions of discrete and continuous translations  on the torus $\bbR^g/\bbZ^g$, respectively; 
 \item $\bar a>0$ and $\bar b\in\bbR$ are normalization constants. 
 \end{itemize}
 Then $\oc J\in J(E)$ if and only if
 \begin{equation}\label{eq}
\oc a(n)^2=\cA(\alpha-\mu n), \quad
\oc b(n)=\cB(\alpha-\mu n),
\end{equation}
for some $\alpha\in \bbR^g/\bbZ^g$. In this case we write $\oc J=J(\alpha)$. Thus,
\begin{equation}\label{param1}
J(E)=\{J(\alpha):\ \alpha\in\bbR^g/\bbZ^g\}.
\end{equation}

\begin{definition}
For an arbitrary finite system of intervals $E$, we say that a Jacobi matrix $J_+$ belongs to the Killip-Simon class  $\KS(E)$ if  for some $X$ the corresponding spectral measure $d\sigma$ is supported on
$E\cup X$ and obeys \eqref{eq5}.
\end{definition}
\begin{theorem}\label{mainhy}
$J_+$ belongs to $\KS(E)$ if and only if there exist $\epsilon_\alpha(n)\in\l_+^2(\bbR^g)$ and $\epsilon_a(n)\in \l^2_+$,
$\epsilon_b(n)\in \l^2_+$ such that
 \begin{eqnarray}
a(n)^2&=&\cA(\sum_{k=0}^n\epsilon_\alpha(k)-\mu n)+\epsilon_a(n),\label{132}\\
b(n)&=&\cB(\sum_{k=0}^n\epsilon_\alpha(k)-\mu n)+\epsilon_b(n),\label{133}
\end{eqnarray}
where $\cA(\alpha)$ and $\cB(\alpha)$ are defined in \eqref{aal} and \eqref{bal}, respectively.
\end{theorem}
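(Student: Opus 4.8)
The plan is to prove both implications through a single \emph{sum rule} (trace formula) that equates the spectral functional in \eqref{eq5} with a nonnegative additive functional $\sum_{n\ge0}D(n)$ of the Jacobi coefficients whose local term $D(n)$ vanishes precisely when the site-$n$ data lie on the $\mu$-orbit of the isospectral torus, and then to upgrade the resulting $\ell^2$ condition to the explicit parametric form \eqref{132}--\eqref{133}. The natural intermediate statement is the Damanik-Killip-Simon condition \eqref{opp}: I would first show that $J_+\in\KS(E)$ is equivalent to $\sum_{n\ge0}\dist^2((S_+^*)^nJ_+S_+^n,J(E))<\infty$, and separately that this $\ell^2$-proximity to the torus is equivalent to \eqref{132}--\eqref{133}.

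For the sum rule itself I would run the \emph{Jacobi flow on GSMP matrices} as the analogue of coefficient stripping. One step of the flow should yield a \emph{step-by-step sum rule} relating the increment of a spectral entropy to the local coefficient data at site $n$; here the functional model for GSMP matrices replaces the polynomial magic formula $T_m(J)$ of the periodic case, which is unavailable for general $E$ since no such polynomial exists unless the harmonic measures of the bands are rational. Summing the step-by-step relations over $n$ and passing to the limit---controlling the boundary contributions by a lower-semicontinuity argument for the entropy---should produce the full sum rule, with the weights $\sqrt{\dist(x,\bbR\setminus E)}$ and $\sqrt{\dist(x_k,E)}^3$ emerging from the Green function and equilibrium measure of $E$.

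To pass from the $\ell^2$ condition to \eqref{132}--\eqref{133}, I would use that Remling's theorem forces every right limit of $J_+$ into $J(E)=\{J(\alpha)\}$, so that at each site one may select $\alpha(n)\in\bbR^g/\bbZ^g$ with $a(n)^2=\cA(\alpha(n))+\epsilon_a(n)$, $b(n)=\cB(\alpha(n))+\epsilon_b(n)$ and $\ell^2$ errors. Setting $\epsilon_\alpha(n):=\alpha(n)-\alpha(n-1)+\mu$, so that $\alpha(n)=\sum_{k=0}^n\epsilon_\alpha(k)-\mu n$, it remains to show $\{\epsilon_\alpha(n)\}\in\ell^2$; this holds once the map $\alpha\mapsto(\cA(\alpha),\cB(\alpha))$ is an immersion off a lower-dimensional singular set, so that $\ell^2$ control of the coefficient increments transfers to $\ell^2$ control of the phase increments. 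The converse direction is then a direct computation: substituting \eqref{132}--\eqref{133} into the coefficient side, the local terms $D(n)$ vanish on the exact $\mu$-orbit and are quadratically small in $\epsilon_\alpha(n),\epsilon_a(n),\epsilon_b(n)$, hence summable, and the sum rule transfers this finiteness to the spectral side \eqref{eq5}, including the eigenvalue sum over $X$.

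The main obstacle is the construction and control of the sum rule for a \emph{general} system of intervals, where the polynomial structure $E=T_m^{-1}([-2,2])$ is absent and the objects in $J(E)$ are almost periodic rather than periodic. This is exactly where the GSMP matrices and the analytic vector bundles of the abstract should carry the load: the higher-order remainder in the step-by-step sum rule and the non-degeneracy of the immersion $\alpha\mapsto(\cA,\cB)$ are naturally encoded in the curvature of a character-automorphic bundle over the Riemann surface of $E$, and establishing the required curvature and semicontinuity estimates uniformly---in particular at the band edges, where the equilibrium density vanishes and the parametrization is most delicate---is the crux of the argument.
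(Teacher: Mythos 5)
Your proposal reproduces the broad skeleton advertised in the introduction (Jacobi flow on GSMP matrices, $\ell^2$-proximity to the isospectral torus, a magic-formula substitute for general $E$), but it contains a concrete gap at the step where you pass from $\ell^2$ control of the Jacobi coefficients to $\ell^2$ control of the phases $\epsilon_\alpha(n)$. You propose to select $\alpha(n)$ from the single pair $(a(n)^2,b(n))$ and to justify the transfer of $\ell^2$ estimates by asserting that $\alpha\mapsto(\cA(\alpha),\cB(\alpha))$ is an immersion off a lower-dimensional set. For $g\ge 2$ this map goes from the $g$-dimensional torus $\bbR^g/\bbZ^g$ into $\bbR^2$, so it cannot be an immersion anywhere, and a single pair $(a(n),b(n))$ does not determine $\alpha(n)$ even approximately; consequently the increments $\epsilon_\alpha(n)=\alpha(n)-\alpha(n-1)+\mu$ are not controlled by your data. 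This is precisely the point of the paper's change of coordinates: the isospectral torus is realized inside the $2(g+1)$-dimensional space of GSMP block data $(\vp_0(n),\vq_0(n))$ as the surface $\is$ cut out by $p_g=1/\lambda_0$, $\langle\vp,\vq\rangle=-\bc_0/\lambda_0$, $\Lambda_k(\vp,\vq)=\lambda_k$, with the non-degeneracy \eqref{grad} of the gradient matrix \eqref{gradw}; the nearest point $\{\oc\vp(\alpha_n),\oc\vq(\alpha_n)\}$ in \eqref{estmain1} then pins down $\alpha_n$ with the required quantitative control, and \eqref{m29}--\eqref{m31} supply the $\ell^2$ distances. Without some such higher-dimensional window of data your selection of $\alpha(n)$ cannot be made to work.

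Two further points. First, your sum rule is built on a step-by-step entropy argument with semicontinuity at the band edges; the paper deliberately avoids constructing any new sum rule for general $E$. Its first basic observation is that a \emph{rational} function $V$ with $E=V^{-1}([-2,2])$ always exists (\eqref{eq13}--\eqref{eq15}), so the spectral condition \eqref{eq5} is equivalent, via the density computation \eqref{densks}, to the already-known block-matrix Killip--Simon theorem applied to $V(A_+)$, i.e.\ to \eqref{inhs}; the only ``flow derivative'' used is the algebraic identity of Lemma \ref{lemjder} for the functional \eqref{eq10}. Your remark that no magic formula exists unless harmonic measures are rational applies to polynomial preimages, not to the rational $V$. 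Second, the genuinely hard step — converting the Hilbert--Schmidt condition on $V(A)$ into conditions on the coefficients of $A$ themselves (Theorem \ref{th73} and the $\Lambda^\#$ machinery of Sections 3--6), and then the reproducing-kernel estimates \eqref{mlkap1}--\eqref{mlkap3} with the Gram--Schmidt analysis of \eqref{mthos} needed for the converse — is compressed in your proposal to ``the local terms vanish on the orbit and are quadratically small.'' The paper explicitly warns, with reference to the higher-order sum rules literature and Lukic's counterexample, that this extraction is where the real difficulty lies, and your proposal does not indicate how it would be carried out.
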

\begin{remark} In the one interval case the functions $\cA$ and $\cB$ are constants, e.g. if $E=[-2,2]$, then $\cA=1$ and $\cB=0$ and we obtain the original Killip-Simon Theorem.
\end{remark}

\begin{remark}\label{rem7}
It is easy to see that a Jacobi matrix of the form \eqref{132}-\eqref{133} satisfies \eqref{opp}, see Lemma \ref{lem72}. Moreover, from our explicit formulas one can give immediately a suitable approximant  for $(S_+^*)^nJ_+S^n_+$, this is
$J(\alpha_n)\in J(E)$, $\alpha_n=\sum_{k=0}^n\epsilon_\alpha(k)-\mu n$; or conclude that, if the series $\beta:=\sum_{k=0}^\infty\epsilon_\alpha(k)$ conditionally converges, then the coefficients  of $J_+$ approach, in fact, to the coefficients of the fixed element  $J(\beta)\in J(E)$,
$$
a^2(n)-\cA(\beta-\mu n)\to 0\ \text{and}\ b(n)-\cB(\beta-\mu n)\to 0,\ \text{where}\ n\to\infty.
$$
This representation is a little bit ambiguous, for this reason see Remark \ref{rem71}.
\end{remark}

\subsection{Basic ideas of the method and the structure of the paper}

\noindent
\textit{The proof of DKST}   was based on two things:
\begin{itemize}
\item[(i)] Magic formula for periodic Jacobi matrices
\item[(ii)] Matrix version of the Killip-Simon theorem
\end{itemize}

The first one is the following identity. Let $S$ be the shift in the space of two sided sequences $\l^2$. If 
$E=[\bb_0,\ba_0]\setminus\cup_{j=1}^g(\ba_j,\bb_j)=T^{-1}_{g+1}([-2,2])$, then
\begin{equation}\label{mf}
T_{g+1}(\oc J)=S^{g+1}+S^{-(g+1)}
\end{equation}
for all $\oc J\in J(E)$. The last matrix can be understood as the $(g+1)\times(g+1)$-block Jacobi matrix with the constant block coefficients $\oc A(n)=I_{g+1}$ and $\oc B(n)=\mathbf{0}_{g+1}$.

Now, for $J_+$ the matrix $T_{g+1}(J_+)$ is a $(2g+3)$-diagonal matrix, or, also a one-sided $(g+1)\times(g+1)$ Jacobi block-matrix
$$
T_{g+1}(J_+)=\begin{bmatrix}
B(0)&A(1)& & \\
A(1)&B(1)& A(2)& \\
 &\ddots&\ddots&\ddots
\end{bmatrix}.
$$
Such matrix has a spectral $(g+1)\times(g+1)$ matrix-measure, say $d\Sigma$. According to \cite{KSDp} the matrix analog of \eqref{eq2} is of the form
\begin{equation}\label{eq12}
\int_{-2}^2|\log\det\Sigma'(y)|\sqrt{4-y^2}dy+\sum_{y_k\in Y}\sqrt{y_k^2-4}^3<\infty,
\end{equation}
as before  $[-2,2]\cup Y$ is the support of $d\Sigma$. On the one hand this condition can be rewritten by means of the spectral measure $d\sigma$ of the initial Jacobi matrix $J_+$ into the form \eqref{eq5}, $y=T_{g+1}(x)$. On the other hand, due to the matrix version of the Killip-Simon theorem, \eqref{eq12} is equivalent to $T_{g+1}(J_+)-(S_+^{g+1}+(S_+^*)^{g+1})$ belongs to the Hilbert-Schmidt class. This is a certain bunch of conditions on the coefficients of $J_+$, but we should recognize that extracting from this simple-looking condition the final one \eqref{opp}, is a very non-trivial task.

\smallskip
Our \textit{first basic observation} is the following.

\begin{lemma}
For a system of intervals $E$  there exists a unique rational function $V(z)$ such that
\begin{equation}\label{eq13}
E=[\bb_0,\ba_0]\setminus\bigcup_{j=1}^g(\ba_j,\bb_j)=V^{-1}([-2,2]),
\end{equation}
and $\Im V(z)>0$ for $\Im z>0$.
\end{lemma}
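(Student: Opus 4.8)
The plan is to realize $V$ as a Joukowski transform of a Blaschke-type inner function built from the Green's function of the exterior domain. Write $\Omega=\overline{\bbC}\setminus E$, and recall that the Joukowski map $w=\Phi+\Phi^{-1}$ identifies the punctured disk $\bbD\setminus\{0\}$ biholomorphically with $\overline{\bbC}\setminus[-2,2]$, carrying the unit circle $\{|\Phi|=1\}$ onto $[-2,2]$. Hence it suffices to construct a proper holomorphic map $\Phi\colon\Omega\to\bbD$ of degree $g+1$ with $|\Phi|=1$ on $E=\partial\Omega$, which is real-symmetric ($\Phi(\bar z)=\overline{\Phi(z)}$) and has a simple zero at $\infty$ together with exactly one simple zero $x_j$ in each gap $(\ba_j,\bb_j)$. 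Given such $\Phi$, the function $V:=\Phi+\Phi^{-1}$ is real on $E$ (where $|\Phi|=1$), extends across $E$ by Schwarz reflection to a single-valued meromorphic, hence rational, function on $\overline{\bbC}$, and satisfies $V^{-1}([-2,2])=\Phi^{-1}(\partial\bbD)=E$ by construction.

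To build $\Phi$, I would first set
\[
G(z)=g_\Omega(z,\infty)+\sum_{j=1}^{g}g_\Omega(z,x_j),
\]
a positive harmonic function on $\Omega$ that vanishes on $E$ and carries logarithmic poles at $\infty$ and at the as-yet-unchosen points $x_j$. Its (a priori multivalued) harmonic conjugate $\tilde G$ produces the candidate $\Phi=\exp(-(G+i\tilde G))$, which has modulus $e^{-G}<1$ inside $\Omega$, modulus $1$ on $E$, and precisely the prescribed zeros. The only obstruction to $\Phi$ being single-valued—and therefore to $V$ being rational—is that the increments of $\tilde G$ around the gaps need not lie in $2\pi\bbZ$. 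Writing $\omega_k(\cdot)$ for the harmonic measure governing the period of the conjugate of $g_\Omega(\cdot\,,\zeta)$ over a cycle encircling the $k$-th gap, single-valuedness is exactly the system
\[
\omega_k(\infty)+\sum_{j=1}^{g}\omega_k(x_j)\in\bbZ,\qquad k=1,\dots,g .
\]
So everything reduces to choosing one point $x_j$ in each gap so that these $g$ relations hold simultaneously.

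The heart of the matter — and the step I expect to be the main obstacle — is the solvability and, crucially, the \emph{unique} solvability of this period system with each $x_j$ confined to its gap. I would establish it by a degree/continuity argument: as $x_j$ traverses its gap the harmonic measure $\omega_k(x_j)$ is strictly monotone and sweeps a full unit of period, so the period map $(x_1,\dots,x_g)\mapsto(\text{periods}\bmod 1)$ from the product of the gaps to the torus $(\bbR/\bbZ)^g$ is proper with nowhere-vanishing Jacobian, hence a diffeomorphism; this yields both existence and uniqueness of the pole configuration. (Equivalently, one may phrase the same statement as the bijectivity of Jacobi inversion for the Abel map on the hyperelliptic curve $Y^2=\prod_i(z-\lambda_i)$, where $\lambda_i$ are the band edges.) Real-symmetry of $\Phi$ is then automatic because all data are invariant under $z\mapsto\bar z$ once the $x_j$ are real, which in turn forces $V=\Phi+\Phi^{-1}$ to be real rational.

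Finally I would dispatch the two remaining points. For uniqueness of $V$ among \emph{all} rational functions with the two stated properties, note that $\Im V>0$ on $\bbC_+$ makes $V$ Herglotz, and compactness of $E$ together with the requirement $V^{-1}([-2,2])=E$ forces $V$ to have degree exactly $g+1$ (each monotonicity lap on $\bbR$ contributes exactly one of the $g+1$ bands, pinning a pole at $\infty$ and one pole in each gap). Then $g_{\Omega_0}(V(z),\infty)$, with $\Omega_0=\overline{\bbC}\setminus[-2,2]$, is positive harmonic on $\Omega$, vanishes on $E$, and has simple logarithmic poles exactly at $\infty$ and at the gap-poles of $V$; by the uniqueness of such a function it must coincide with $G$, so the pole positions solve the \emph{same} period system and are therefore the unique $x_j$ found above, which determines $\Phi$ up to a unimodular constant that real-symmetry fixes — hence $V$ is unique. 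The condition $\Im V>0$ on $\bbC_+$ itself follows from real-symmetry together with the residue signs: $V=\Phi+\Phi^{-1}$ has simple real poles with positive residues and positive derivative on each band, which is precisely the Herglotz property. An equivalent route to existence, which I would keep as a remark, is to produce $\Phi$ as the conformal map of $\bbC_+$ onto the associated comb domain — a half-strip whose channels encode the harmonic measures of the bands — where existence is the Riemann mapping theorem and the requirement that $e^{i\theta}$ close up reproduces the very same period conditions.
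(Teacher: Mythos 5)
Your construction is correct in substance, but it takes a genuinely different and heavier route than the paper. Both proofs reduce the lemma to producing a single-valued holomorphic $\Phi\colon\bar\bbC\setminus E\to\bbD$ of degree $g+1$, unimodular on $E$, and then set $V=\Phi+\Phi^{-1}$. The paper simply takes $\Phi$ to be the Ahlfors function $\Delta$ of the domain, for which the closed formula \eqref{eq14} is available: the square root of $\prod_{j=0}^g\frac{z-\ba_j}{z-\bb_j}$ is single-valued in $\bar\bbC\setminus E$ because each band carries exactly two branch points, so single-valuedness, the location of the zeros $\bc_j\in(\ba_j,\bb_j)$, and the partial-fraction form \eqref{eq15} with $\lambda_j>0$ (hence the Herglotz property) all drop out of an explicit algebraic identity. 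You instead build $\Phi$ from $\exp(-(G+i\tilde G))$ with $G$ a sum of Green's functions, which forces you to solve the period problem by Jacobi inversion. What your route buys is a proof that does not depend on the explicit formula (and hence generalizes), plus an actual uniqueness argument --- the paper's proof only exhibits $V$ and leaves uniqueness implicit, whereas your identification of $g_{\Omega_0}(V(z),\infty)$ with $G$ and the degree count for a Herglotz $V$ with $V^{-1}([-2,2])=E$ is a genuine addition. What it costs is that the entire weight of existence now rests on real Jacobi inversion.

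On that key step your stated justification is too loose: for $k\neq j$ the function $x_j\mapsto\omega_k(x_j)$ tends to $0$ at both endpoints of the $j$-th gap (the endpoints lie in bands other than $E_k$), so it cannot be strictly monotone, and it certainly does not sweep a full unit of period. The correct statement is that the full period map $(x_1,\dots,x_g)\mapsto\bigl(\sum_j\omega_k(x_j)\bigr)_k$ is a diffeomorphism onto the torus, and this requires the nondegeneracy of the period matrix (Riemann's bilinear relations), i.e.\ exactly the Jacobi-inversion theorem you mention parenthetically. Since you do invoke that theorem, the proof closes, but the componentwise monotonicity argument should be deleted rather than repaired; it is the one point where your write-up asserts something false. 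A second, cosmetic remark: the weights in the Herglotz representation correspond to \emph{negative} residues of $V$ at its real poles in the convention $\lambda_j/(\bc_j-z)$ of \eqref{eq15}, so ``positive residues'' should read ``positive masses'' (equivalently $1/\Phi'(x_j)<0$).
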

\begin{proof}
Let $\Delta(z)$ be the Alphors function in the domain $\bar\bbC\setminus E$. Among all analytic functions in this domain, which vanish at infinity and are bounded by one in absolute value, this function has the biggest possible value Cap$(E)=|z\Delta(z)|_{z=\infty}$ (the so-called analytic capacity) \cite{ALF}. As it is well known
\begin{equation}\label{eq14}
\frac{1-\Delta(z)}{1+\Delta(z)}=\sqrt{\prod_{j=0}^g\frac{z-\ba_j}{z-\bb_j}}.
\end{equation}
Then
\begin{equation}\label{eq15}
V(z)=\frac{1}{\Delta(z)}+\Delta(z)=\lambda_0z+\bc_0+\sum_{j=1}^g\frac{\lambda_j}{\bc_j-z}
\end{equation}
where $\lambda_j>0$, $j\ge 0$, and $\Delta(\bc_j)=0$, $\bc_j\in(\ba_j,\bb_j)$, $j\ge 1$.
\end{proof}

Note that in this proof we represented $V(z)$ as a superposition of a function $\Delta:\bar\bbC\setminus E\to \bbD$ with the Zhukovskii map. Essentially \eqref{eq15} is our \textit{generalized magic formula}, though it holds of course not for Jacobi matrices.

\bigskip
\noindent
\textit{Jacobi, CMV and SMP matrices}. Jacobi matrices probably the oldest object in the spectral theory of self-adjoint operators generated by the moment problem \cite{AKHmp}
\begin{equation}\label{eq16}
s_k=\int x^k d\sigma.
\end{equation}
In this problem we are looking for a measure $d\sigma$ supported on the real axis, which provides the representation 
\eqref{eq16} for the given moments $\{s_k\}_{k\ge 0}$. In this sense CMV matrices are related to the \textit{trigonometric moment problem}, which corresponds to the same question with respect to a measure supported on the unit circle. Note that this problem is also classical \cite{AKHmp}, but corresponding CMV matrices are a comparably fresh object in the spectral theory \cite{2005v1, 2005v2}. The \textit{strong moment problem} corresponds to measures on the real axis in the case that the moments are given for \textit{all integers} $k$.  An extensive bibliography of works on the strong moment problem can be found in the survey  \cite{JN}, concerning its matrix generalization see \cite{Sim1,Sim2}. 

As usual the solution of the problem deals with the orthogonalization of the generating system of functions, that is, the system
$$
1, \frac {-1} x, x, \frac {(-1)^2}{x^2}, x^2,\dots
$$
in the given case. The multiplication operator by the independent variable in $L^2_{d\sigma}$ with respect to the related \textit{orthonormal basis we call SMP matrix} (this is exactly the way of the appearance of Jacobi and CMV matrices in connection with the power and trigonometric moment problem, respectively). In another terminology they are called Laurent-Jacobi matrices \cite{BD2,DUD,HN}. Very similar to the CMV-case, this is a five-diagonal matrix of a special structure, say $A_+=A_+(d\sigma)$. We assume that the measure is compactly supported and the origin does not belong to the support of this measure. In this case our $A_+$ is bounded, moreover $A_+^{-1}$ is also a bounded operator of a similar five-diagonal structure (just shifted by one element!)

Note that, by a linear change of variable, we can always normalize an arbitrary \textit{two intervals} system to the form
$\bc_1=0$, that is,
\begin{equation}\label{eq17}
E=[\bb_0,\ba_0]\setminus(\ba_1,\bb_1)=V^{-1}([-2,2]),\quad V(z)=\lambda_0+\bc_0-\frac{\lambda_1}{z}.
\end{equation}
Without going in details, dealing with the structure of SMP matrices, we can formulate our \textit{second basic observation}. 
\begin{proposition}\cite{EPY}
Let $A(E)$ be the set of all two sided SMP matrices of period two with their spectrum on $E$ \eqref{eq17}.
Then $\oc A\in A(E)$ if and only if 
\begin{equation}\label{eq18}
V(\oc A)=\lambda_0\oc A+\bc_0-\lambda_1 (\oc A)^{-1}=S^2+S^{-2}.
\end{equation}
\end{proposition}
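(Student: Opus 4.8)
The plan is to prove the "magic formula" analog \eqref{eq18} by mimicking the structure of the classical magic formula \eqref{mf}, but transported through the Zhukovskii-type map that produced $V$ in \eqref{eq15}. The central object is a period-two two-sided SMP matrix $\oc A$, whose spectrum is the two-interval set $E$ in \eqref{eq17}. The guiding intuition is exactly the one stressed after the Lemma: $V$ is a superposition of the Ahlfors function $\Delta:\bar\bbC\setminus E\to\bbD$ with the Zhukovskii map $w\mapsto w^{-1}+w$. Since the classical formula says $T_{g+1}(\oc J)=S^{g+1}+S^{-(g+1)}$ precisely because $T_{g+1}$ conformally uniformizes $E$ onto $[-2,2]$ with the right covering degree, I expect $V(\oc A)=S^2+S^{-2}$ to hold because $V$ plays the identical uniformizing role, and the SMP matrix $\oc A$ carries the correct period-two structure corresponding to the two intervals.

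First I would establish the two containments separately. For the forward direction, assume $\oc A\in A(E)$. The key is to exploit the five-diagonal period-two structure of SMP matrices: by the normalization $\bc_1=0$ we have that $\oc A^{-1}$ is again a shifted five-diagonal matrix, so $V(\oc A)=\lambda_0\oc A+\bc_0-\lambda_1\oc A^{-1}$ is a genuine bounded two-sided operator whose bandwidth I would compute explicitly and show equals that of $S^2+S^{-2}$ (i.e.\ only the $\pm 2$ diagonals survive). The nontrivial content is that the intermediate diagonals cancel and the surviving entries are constants equal to $1$; this cancellation is forced by the spectral condition $\sigma(\oc A)=E=V^{-1}([-2,2])$ together with periodicity. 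Concretely, I would pass to the spectral/functional-model side: in $L^2_{d\sigma}$ the operator $\oc A$ is multiplication by the independent variable, so $V(\oc A)$ is multiplication by $V(x)$, and since $V(E)=[-2,2]$ traversed as a double cover, the operator $V(\oc A)$ must have spectrum $[-2,2]$ with the two-sheeted multiplicity structure that is realized precisely by $S^2+S^{-2}$.

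For the converse, assume \eqref{eq18} holds. Then $V(\oc A)=S^2+S^{-2}$ has spectrum $[-2,2]$, so by the spectral mapping theorem $\sigma(\oc A)\subseteq V^{-1}([-2,2])=E$, and I would argue the reverse inclusion from the fact that $S^2+S^{-2}$ has no eigenvalues or gaps inside $[-2,2]$, forcing $\oc A$ to fill out all of $E$. To recover periodicity of period two, I would read off from \eqref{eq18} that the shift structure of the right-hand side pins down $\oc A$ up to the finitely many free parameters of $A(E)$; this is where I would invoke the explicit description of two-sided period-two SMP matrices (which, like the block-Jacobi case, forms a torus parametrized by the isospectral data) to conclude $\oc A\in A(E)$.

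The main obstacle I anticipate is the bookkeeping in the forward direction: verifying algebraically that the off-diagonal bands of $V(\oc A)$ between $\pm 1$ and $\pm 2$ vanish, and that the $\pm 2$ bands are exactly the identity. Unlike the polynomial magic formula, here $V$ involves the inverse $\oc A^{-1}$, so the cancellation mixes the five-diagonal entries of $\oc A$ with those of its (shifted) inverse in a way that is not purely combinatorial. I would handle this by working in the functional model rather than with raw matrix entries, using that the map $\Delta$ conjugates the shift on the model space to multiplication, so that the Zhukovskii combination $\Delta^{-1}+\Delta$ becomes $S^2+S^{-2}$ by construction. Since the statement is cited from \cite{EPY}, I would lean on that reference for the detailed entrywise computation and present the functional-model argument as the conceptual core, analogous to how the original magic formula follows from $T_{g+1}$ uniformizing $E$.
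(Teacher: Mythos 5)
Your conceptual core --- that in the functional model the Ahlfors function $\Delta$ acts as the shift, so the Zhukovskii combination $V(\oc A)=\Delta(\oc A)^{-1}+\Delta(\oc A)$ becomes $S^{2}+S^{-2}$ ``by construction'' --- is exactly how the paper handles the general version of this statement (Proposition \ref{prop18}): in the basis \eqref{gsmpbase} one has $\Delta f^\alpha_n=f^\alpha_{n+(g+1)}$, and \eqref{magic} then follows directly from \eqref{eq15}. (The two-interval Proposition itself is only cited from \cite{EPY}; the paper gives no separate proof of it.) But the way you actually carry out the forward direction has a gap: you lean on the claim that $V(\oc A)$ ``must have spectrum $[-2,2]$ with the two-sheeted multiplicity structure that is realized precisely by $S^2+S^{-2}$''. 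Spectrum plus multiplicity determines an operator only up to unitary equivalence; it does not identify the matrix of $V(\oc A)$ \emph{in the fixed SMP basis} with $S^2+S^{-2}$. The identity \eqref{eq18} is an entrywise statement in that basis, and the only thing that delivers it is the observation that the SMP basis is built from powers of $\Delta$, so multiplication by $\Delta$ is literally $S^2$ --- the point you mention only at the end and then defer to \cite{EPY} for ``the detailed entrywise computation''. That observation \emph{is} the proof; the multiplicity argument is not a substitute for it.

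The more serious gap is the converse. You assert that the shift structure of $S^2+S^{-2}$ ``pins down $\oc A$ up to the finitely many free parameters of $A(E)$'', but that is precisely what has to be proved; the spectral mapping theorem only gives $\sigma(\oc A)\subseteq V^{-1}([-2,2])=E$, and neither the absence of gaps in $\sigma(S^2+S^{-2})$ nor an appeal to the torus parametrization explains why $\oc A$ must land \emph{on} the isospectral torus rather than merely have the right spectrum. The standard route (as in the Damanik--Killip--Simon magic-formula argument) is to show that $V(\oc A)=S^2+S^{-2}$ forces $\oc A$ to be reflectionless on $E$: express $(\oc A-z)^{-1}$ through $(V(\oc A)-V(z))^{-1}$ by partial fractions in the rational function $V$, use that $S^2+S^{-2}$ is reflectionless on $[-2,2]$, and then invoke the characterization of $A(E)$ as the reflectionless (equivalently, period-two) SMP matrices with spectrum $E$. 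Without some version of this step your argument establishes only the ``only if'' half of an ``if and only if''.
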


\begin{remark} 
It is highly important in \eqref{eq18} to be hold  that both $\oc A$ and $(\oc A)^{-1}$ are five-diagonal matrices.
\end{remark}

Naturally, \eqref{eq17}-\eqref{eq18} have to be generalized to the multi-interval case. This leads to the concept of GSMP
matrices (G for generalized), see the next subsection. However, even after such a generalization the result on spectral properties of ("some") GSMP matrices of Killip-Simon class would be interesting probably only
to a small circle of specialists, working with the strong moment problem. The point is that GSMP matrices are used here as a certain intermediate (but very important) object. In a sense, this is the best possible choice of a \textit{system of coordinates}. We can try to clarify the last sentence. The standard point of view on $J(E)$ is to associate $J(E)$ with the hyperelliptic Riemann surface $\mathfrak{R}_E=\{(z,w):\ w^2=\prod_{j=0}^g(z-\ba_j)(z-\bb_j)\}$. Then $J(E)$ corresponds to the "real part" of the Jacobian variety Jac$(\mathfrak{R}_E)$ of this surface see e.g. \cite{MaT, MUM}.  Periodic GSMP matrices, satisfying 
\begin{equation}\label{magic}
V(\oc A)=S^{g+1}+S^{-(g+1)}
\end{equation}
for $V(z)$ given in \eqref{eq15}, are most likely the best possible choice for a coordinate system on the affine part of Jac$(\mathfrak{R}_E)$, at least in application to spectral theory.

Thus, the point is to go back to Jacobi matrices. Let $d\sigma$ be compactly supported and $0$ does not belong to its support. We can define the map 
$$
\cF_+: \text{SMP}\to \text{Jacobi}
$$ just setting $J_+(\sigma)$ in correspondence with the given $A_+(\sigma)$.
If so, we can define (in a naive way) a discrete dynamical system (\textit{Jacobi flow on SMP matrices}) by the map $\cJ_+$, which corresponds to the following commutative diagram:
\begin{equation}\label{defjfg+}
\begin{array}{ccc}
 \text{SMP} & \xrightarrow{\mathcal J_+}  & \text{SMP}  \\
   &   &    \\
_{\mathcal F_+}  \big\downarrow &   &  _{\mathcal F_+} \big\downarrow  \\
   &   &    \\
\text{Jacobi}  &  \xrightarrow{\mathcal S_+}  &  \text{Jacobi} 
\end{array}
\end{equation}
where $\cS_+ J_+=S^*_+ J_+ S_+$.

The  \textit{third basic observation} deals with the idea of getting properties of the class $\KS(E)$ from the corresponding properties of the class of SMP (or, generally, GSMP) matrices using the above introduced dynamical system
$$
A_+(n)=\cJ_+^{\circ n}(A_+).
$$
 
 This definition \eqref{defjfg+} is naive for the following reason. In the transformation $J_+(n)=\cS_+^{\circ n} J_+$ the eigenvalues in the gaps start to move. E.g., in the generic case for an initial $\oc J_+$, which corresponds to one of our fundamental  operators $\oc J\in J(E)$, the eigenvalues will cover densely  the spectral gaps $(\ba_j,\bb_j)$. Thus, corresponding to such measures $A_+(n)$ just can not be properly defined. The easiest way to explain that nevertheless our program is doable is the following:  pass to two-sided Jacobi matrices and enjoy unitarity of the shift $S$ in $\l^2$! (One can  actually work with one-sided matrices but use methods related to two dimensional cyclic subspaces, which is naturally required if one works with two-sided matrices).

In the next subsection we give  formal definitions for GSMP matrices and the Jacobi flow on them,  but probably we can already outline the structure of the current paper:

Section 2. We recall the functional model for finite gap Jacobi matrices. In this model each operator is marked by a Hardy
space $H^2(\alpha)$ of character-automorphic functions  in the domain $\bar\bbC\setminus E$, where $\alpha$ is a character of the fundamental group of this domain \eqref{aleqal}, so, as before, $\alpha\in\bbR^g/\bbZ^g$ cf. \eqref{param1}. Here $J(\alpha)$ is the multiplication operator by the independent variable with respect to the basis $\{e^\alpha_n\}_{n\in\bbZ}$
\eqref{defeal}, and $\{e^\alpha_n\}_{n\ge 0}$ is an intrinsic basis in $H^2(\alpha)$.  The point is that the in this domain \textit{inner} function $\Delta(z)$ and the fixed ordering $\bC=\{\bc_1,...,\bc_g\}$ of its zeros generates another natural basis $\{f^\alpha_n\}_{n\ge 0}$ in $H^2(\alpha)$ \eqref{gsmpbase}. Thus, we obtain a new family of operators
$$
A(E,\bC)=\{A(\alpha,\bC):\ \alpha\in\bbR^g/\bbZ^g\}.
$$
This is the collection of all periodic GSMP matrices associated with the given spectral set $E$ and a fixed ordering $\bC$ of zeros of the Alphors function $\Delta(z)$. The fact that $\Delta(z)$ is \textit{single valued} (the character corresponding to this function is trivial) is responsible for the periodicity of an arbitrary $A(\alpha,\bC)$. 

Another characteristic feature of $\Delta(z)$ is its certain conformal invariance. Indeed, if $w=w_j=\frac{1}{c_j-z}$, then $\Delta_j(w):=\Delta(z)$ is the Alphors function in the $w$-plane. The given ordering $\bC$ generates the specific ordering
$$
\bC_j=\left\{\frac 1{\bc_{j+1}-\bc_j},\dots,\frac 1{\bc_{g}-\bc_j},0,\frac 1{\bc_{1}-\bc_j},\dots,\frac 1{\bc_{j-1}-\bc_j}\right\}
$$
and the multiplication by $w$ with an appropriate shift is again a \textit{periodic GSMP matrix}. In other words,
\begin{equation}\label{aut1}
S^{-j}(\bc_j-A(\alpha,\bC))^{-1}S^j\in A(E_j,\bC_j),
\end{equation}
where  $E_j=\{y=\frac 1{c_j-x}:\ x\in E\}$. Note that $0=w(\infty)$.
Let us point out that the spectral condition \eqref{eq5} possesses the same conformal invariance property.
Thus, passing from the $e$-basis to the $f$-basis in $H^2(\alpha)$, we payed a certain prize: $J(\alpha)$ is three diagonal and  $A(\alpha,\bC)$ is
a $(2g+3)$-diagonal matrix. 
 In the same time we essentially win, since $(\bc_j-J(\alpha))^{-1}$ has infinitely many non-trivial diagonals, but due to \eqref{aut1} all matrices
$(\bc_j-A(\alpha,\bC))^{-1}$ are still $(2g+3)$ diagonal.
For them \eqref{eq15} (in the chosen basis) is nothing but the magic formula \eqref{magic}.

The Jacobi flow on $A(E,\bC)$ can be defined in a very natural way. Since
$S^{-1}J(\alpha) S=J(\alpha-\mu)$, we set 
$$
\cJ A(\alpha,\bC)=A(\alpha-\mu,\bC).
$$
As we see, this is just one new object in the family of integrable systems.

Therefore, thanks to this section we are well prepared to understand the structure of GSMP matrices, $A\in \GSMP(\bC)$, and the Jacobi flow on them, $A(n)=\cJ^{\circ n}A$, in the general case, which is done in the Sections 3 and 4, respectively.

Section 5. Thanks to the block-matrix version of the Killip-Simon theorem, it is a fairly simple task to write the necessary and sufficient condition for $A\in\GSMP(\bC)$ with the spectral data \eqref{eq5}  in the form 
\begin{equation}\label{inhs}
V(A)-(S^{-(g+1)}+S^{g+1})\  \text{is in the Hilbert-Schmidt class}.
\end{equation}
Or, equivalently,
\begin{equation}\label{inhs2}
H_+(A)<\infty
\end{equation}
for the \textit{Killip-Simon functional of the problem}, which is basically the $\l^2_+$-part of the trace of $(V(A)-(S^{-(g+1)}+S^{g+1}))^2$, for the precise expression see \eqref{eq10}. In the spirit of our third basic observation, we compute the "derivative" of this functional in the direction of the Jacobi flow, that is, the value
$$
\delta_{\cJ}H_+(A):=H_+(A)-H_+(\cJ A),
$$ 
see Lemma \ref{lemjder}. \textit{This derivative represents a finite sum of squares!} Now, we can rewrite \eqref{inhs2} as the "integral" $\sum_{n\ge 0}\delta_{\cJ}H_+(\cJ^{\circ n}A)<\infty$ to get  certain $\l^2$-properties, which are already more related to the Jacobi matrix $J=\cF A$ than to the given GSMP matrix $A$ itself.

Section 6. But all this was related to the coefficients of $V(A)$, not to the ones of $A$ (or the system of iterates $A(n)$, to be more precise). This is probably the hardest technical part of the work. To indicate the difficulty, we would mention the following. In \cite{NPVY} we found higher-order generalizations of Killip-Simon sum rules (relations between coefficients of $J_+$ and the spectral measure $d\sigma$), for a \textit{single interval spectrum}. But only for a very special family (related to Chebyshev polynomials of an arbitrary degree $n$), which was initially found in \cite{LNS}, we were able to convert the result of the form \eqref{inhs} to explicit relations on the coefficients of the given $J_+$. Otherwise, each particular case becomes a reason for an interesting research, see e.g. \cite{K2004, GZ,SZ}. Moreover, a nice looking general conjecture was recently disproved by M. Lukic \cite{LU}. By the  way, for a highly interesting new development in this area see \cite{GNR}. In this section we prove Theorem \ref{th73}. Practically, this is already a parametric representation for coefficients of Jacobi matrices of $\KS(E)$. 

Section 7. In this section we finalize the parametric representation for Killip-Simon Jacobi matrices associated to an arbitrary system of intervals $E$, that is, we prove the main Theorem \ref{mainhy}. In the end of this section we demonstrate implicitly our \textit{last basic for this paper observation} that the spectral theory in the spirit of \cite{CD} could be more powerful than the  classical orthogonal polynomials approach \cite{AKHmp, BS}, see especially Subsection \ref{subs72}. Explicitly this was demonstrated in \cite{PY, VY, PVY}.
At the moment we are not able to present a theory of spaces of vector bundles, which corresponds as  model spaces to Jacobi matrices of Killip-Simon class even in a finite gap case. 

\subsection{GSMP matrices and Jacobi flow on them in solving the Killip-Simon problem}

In this subsection we give formal definitions for the named objects so that in the end of it we are able to state Theorem \ref{th73}. This is the main ingredient in our proof of Theorem \ref{mainhy}.

Let $\{e_n\}$ be the standard basis in $\l^2$. Depending on the context, $\l^2_+$ is the  set of square-summable one-sided sequences or the subspace of $\l^2$ spanned by $\{e_n\}_{n\ge 0}$. In the last case $\l^2_-:=\l^2\ominus \l^2_+$ and $P_+:\l^2\to \l^2_+$ is the orthogonal projector. 
Also $\{\delta_k\}_{k=0}^g$ denotes the standard basis in the Euclidian space $\bbC^{g+1}$.

By $T^*$ we denote the conjugated operator to an operator  $T$, or the conjugated matrix  if $T$ is a matrix.
In particular, for a vector-column $\vp\in \bbC^{g+1}$, $(\vp)^*$ is a $(g+1)$-dimensional vector-row. Consequently, the scalar product in $\bbC^{g+1}$ can be given in the following form
$$
\langle \vp,\vq \rangle=(\vq)^*\vp.
$$
The notation $T^-$ denotes the upper triangular part of a matrix $T$ (including the main diagonal), respectively 
$T^+:=T-T^-$ is its lower triangular part (excluding the main diagonal).

GSMP matrices form a certain special subclass of real symmetric $(2g+3)$-diagonal matrices, $g\ge 1$.
First of all, the class depends on an ordered collection of distinct points $\bC=\{\bc_1,\dots,\bc_g\}$. That is, if needed we will specify the notation $\GSMP(\bC)$. We will define two-sided GSMP matrices, but their restrictions on the positive half-axis will be highly important.

\begin{definition}
We say that $A$ is \textit{GSMP-structured} if it is a $(g+1)$-block Jacobi matrix
\begin{equation}\label{n1}
A=\begin{bmatrix}
\ddots&\ddots&\ddots&& &\\
&A^*(\vp_{-1})&B(\vp_{-1},\vq_{-1})&A(\vp_0)& & \\
& &A^*(\vp_{0})&B(\vp_{0},\vq_0)&A(\vp_1)& \\
& & &\ddots&\ddots&\ddots
\end{bmatrix}
\end{equation}
such that
\begin{equation}\label{n2}
A(\vp)=\delta_g \vp\,^*,
\quad
B(\vp,\vq)
=(\vq \vp\,^*)^-+(\vp\vq\,^*)^++\tilde\bC,
\end{equation}
and
\begin{equation}\label{n3}
\tilde \bC=\begin{bmatrix}
\bc_1& & & \\
& \ddots& & \\
& & \bc_g & \\
& & &0
\end{bmatrix},\ 
\vp_j=
\begin{bmatrix}
p^{(j)}_0\\
\vdots\\
p^{(j)}_g
\end{bmatrix}, \ 
\vq_j=
\begin{bmatrix}
q^{(j)}_0\\
\vdots\\
q^{(j)}_g
\end{bmatrix}, \quad p^{(j)}_g>0.
\end{equation}
 We call $\{\vp_j,\vq_j\}_{j\in\bbZ}$ the generating coefficient sequences (for the given $A$).
\end{definition}

\begin{remark}
Concerning the last condition in \eqref{n3}: actually, it is important that $p^{(j)}_g\not= 0$. The choice $p^{(j)}_g> 0$
is a matter of normalization. Further, throughout this paper we will assume in this definition that the much  stronger condition 
\begin{equation}\label{n5}
\inf_{j\in\bbZ}p^{(j)}_g>0
\end{equation}
holds. Note that these coefficients $\{p^{(j)}_g\}_{j\in\bbZ}$ form the non-trivial part of the last upper non-vanishing $(g+1)$-th diagonal of a GSMP-structured matrix $A$.
\end{remark}

\begin{definition} Let $S$ be the shift operator $Se_n=e_{n+1}$.
A GSMP-structured matrix $A$ belongs to the GSMP class if the matrices $\{\bc_k-A\}_{k=1}^g$ are invertible, and moreover
$S^{-k}(\bc_k-A)^{-1}S^k$ are GSMP-structured. To abbreviate we write $A\in \GSMP(\bC)$.
\end{definition} 

\begin{remark}
As it follows from the definition the entries of the last upper non-trivial $(g+1)$-th diagonal of the matrix $S^{-k}(\bc_k-A)^{-1}S^k$ should satisfy a counterpart of the condition \eqref{n5}. This set of conditions can be written explicitly by means of the coefficients of the initial GSMP-structured matrix $A$, see \eqref{altdef}.  Moreover, this set of conditions on the forming sequences $\{\vp_j,\vq_j\}_{j\in\bbZ}$ can be considered as a \textit{constructive definition}  of GSMP matrices, see 
Theorem \ref{defaltdef}. That is, $A\in\GSMP(\bC)$ if it is GSMP-structured and \eqref{altdef} holds for the generating sequences.
\end{remark}

Let $J$ be a Jacobi matrix with coefficients
$\{a(n),b(n)\}$:
\begin{equation}\label{ijf1}
Je_n=a(n) e_{n-1}+b(n) e_n +a(n+1) e_{n+1}, \quad a(n)>0, \ n\in\bbZ.
\end{equation}
The two-dimensional space spanned by $e_{-1}$ and $e_0$ forms a cyclic subspace for $J$. Also, $J$ can be represented as a two-dimensional perturbation of the orthogonal sum with respect to the decomposition $\l^2=\l^2_-\oplus \l^2_+$
\begin{equation}\label{ijf2}
J=\begin{bmatrix}
J_-& 0\\
0& J_+
\end{bmatrix}+a(0)(e_0\langle \cdot, e_{-1} \rangle+e_{-1}\langle \cdot, e_{0} \rangle ).
\end{equation}
We have a similar decomposition for $A\in\GSMP(\bC)$
\begin{equation}\label{ijf3}
A=\begin{bmatrix}
A_-& 0\\
0& A_+
\end{bmatrix}+\|\vp_0\|(\tilde e_0\langle \cdot, \tilde e_{-1} \rangle+\tilde e_{-1}\langle \cdot, \tilde e_{0} \rangle ),
\end{equation}
where
$$
\tilde e_{-1}=e_{-1}, \quad \tilde e_0:=\frac{1}{\|\vp_0\|}P_+ A e_{-1}. 
$$

\begin{definition}
For $A\in \GSMP$ the Jacobi matrix $J=\cF A$ is uniquely defined by the conditions
\begin{equation}\label{ijf4}
r_{\pm}(z):=\langle (J_{\pm}-z)^{-1} e_{\frac{-1\pm 1}{2}}, e_{\frac{-1\pm 1}{2}}\rangle=
\langle (A_{\pm}-z)^{-1}\tilde e_{\frac{-1\pm 1}{2}}, \tilde e_{\frac{-1\pm 1}{2}}\rangle
\end{equation}
and $a(0)=\|\vp_0\|$.
\end{definition}

\begin{definition} Let
$\cS J:= S^{-1}J S$. 
The Jacobi flow on GSMP matrices is generated by the transformation $\cJ$, which makes the following diagram commutative
\begin{equation}\label{defjfg}
\begin{array}{ccc}
 \text{GSMP} & \xrightarrow{\mathcal J}  & \text{GSMP}  \\
   &   &    \\
_\mathcal F  \big\downarrow &   &  _\mathcal F \big\downarrow  \\
   &   &    \\
\text{Jacobi}  &  \xrightarrow{\mathcal S}  &  \text{Jacobi} 
\end{array}
\end{equation}
The corresponding discrete dynamical system (Jacobi flow) is of the form $A(n+1)=\cJ A(n)$.
\end{definition}

The coefficients of the Jacobi matrix $J=\cF A$ are easily represented by means of the Jacobi flow acting on the initial $A$. Namely,

\begin{corollary}\label{cor113}
Let $J=\cF A$ and $A(n)=\cJ^{\circ n} A$. In the above notations
\begin{equation}\label{coefflow}
a(n)=\|\vp_0(n)\|, \quad b(n-1)=q_g^{(-1)}(n)p_g^{(-1)}(n).
\end{equation}
\end{corollary}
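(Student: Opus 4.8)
My plan is to transport the statement through the commutative diagram \eqref{defjfg} and then read off the two coefficients directly from the defining properties of $\cF$. Since $\cF\circ\cJ=\cS\circ\cF$ holds for every GSMP matrix, a trivial induction gives $\cF\circ\cJ^{\circ n}=\cS^{\circ n}\circ\cF$, and because $\cS J=S^{-1}JS$ this yields
$$
\cF(A(n))=\cS^{\circ n}(\cF A)=S^{-n}JS^{n}.
$$
Conjugation by the shift keeps a Jacobi matrix Jacobi: checking on basis vectors, $S^{-n}JS^{n}$ has coefficients $\tilde a(m)=a(m+n)$ and $\tilde b(m)=b(m+n)$. In particular the number that the definition of $\cF$ calls $a(0)$ for the matrix $\cF(A(n))$ equals $a(n)$, and the number it calls $b(-1)$ equals $b(n-1)$. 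Hence it suffices to express, for an \emph{arbitrary} $A\in\GSMP$, the quantities $a(0)$ and $b(-1)$ of $J=\cF A$ through the generating coefficients $\{\vp_j,\vq_j\}$, and then to specialize to $A(n)$.

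The first quantity is handed to us: the normalization in \eqref{ijf4} is precisely $a(0)=\|\vp_0\|$. Applying this to $A(n)$, whose leading coefficient is $\vp_0(n)$, yields $a(n)=\|\vp_0(n)\|$, the first identity in \eqref{coefflow}.

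For $b(-1)$ I would exploit the resolvent matching in \eqref{ijf4} with the lower sign. As $\tilde e_{-1}=e_{-1}$, it reads $\langle(J_--z)^{-1}e_{-1},e_{-1}\rangle=\langle(A_--z)^{-1}e_{-1},e_{-1}\rangle$; expanding both resolvents in powers of $z^{-1}$ for large $|z|$ (valid since both operators are bounded) and comparing the $z^{-2}$ coefficients gives $b(-1)=\langle J_-e_{-1},e_{-1}\rangle=\langle A_-e_{-1},e_{-1}\rangle$. The coupling in \eqref{ijf3}, being built from $\tilde e_0\in\l^2_+$ and $\tilde e_{-1}=e_{-1}$, contributes nothing to the $(-1,-1)$ diagonal entry, so that $\langle A_-e_{-1},e_{-1}\rangle=\langle Ae_{-1},e_{-1}\rangle$. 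It remains to locate this entry in the block form \eqref{n1}--\eqref{n3}. With the convention that block $j$ occupies the scalar indices $(g+1)j,\dots,(g+1)j+g$, the index $-1$ is the last ($g$-th) slot of the diagonal block $B(\vp_{-1},\vq_{-1})$. Its $(g,g)$ entry comes only from $(\vq_{-1}\vp_{-1}^{*})^-$, which equals $q_g^{(-1)}p_g^{(-1)}$ on the diagonal, since $(\vp_{-1}\vq_{-1}^{*})^+$ excludes the diagonal and the last entry of $\tilde\bC$ is $0$. Thus $b(-1)=q_g^{(-1)}p_g^{(-1)}$, and the same reading for $A(n)$ produces $b(n-1)=q_g^{(-1)}(n)p_g^{(-1)}(n)$, the second identity in \eqref{coefflow}.

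I expect the genuinely delicate point to be this last step rather than the formal diagram chase. The resolvent identity in \eqref{ijf4} only constrains the spectral measure of $J_-$ at the cyclic vector $e_{-1}$, so one must pass through the moment expansion to recover $b(-1)$ as a diagonal matrix element, and then keep the index bookkeeping straight (block $-1$, slot $g$ $\leftrightarrow$ scalar index $-1$) while using the rank-one nature of the coupling $P_+AP_-$, which is itself a consequence of the special form $A(\vp)=\delta_g\vp^{*}$ (only its last row survives), to be certain that the relevant diagonal entry of $A_-$ is literally that of $A$. Once these structural facts are in place, both formulas fall out.
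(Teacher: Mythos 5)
Your argument is correct, and since the paper states Corollary \ref{cor113} without an explicit proof, your verification is essentially the intended one: the identity $\cF\circ\cJ^{\circ n}=\cS^{\circ n}\circ\cF$ is the defining property of $\cJ$ in \eqref{defjfg}, the first formula in \eqref{coefflow} is the normalization $a(0)=\|\vp_0\|$ built into the definition of $\cF$, and the second reduces to matching $z^{-2}$ coefficients in \eqref{ijf4}. The one point where you diverge from what the paper has on record concerns the $b$-coefficient: you extract $b(n-1)$ from the $r_-$-identity for $A(n)$, landing on the $(g,g)$ entry of the diagonal block $B(\vp_{-1}(n),\vq_{-1}(n))$, which by \eqref{n2}--\eqref{n3} equals $q_g^{(-1)}(n)p_g^{(-1)}(n)$ because $(\vp\vq^{\,*})^+$ excludes the diagonal and the last diagonal entry of $\tilde\bC$ is $0$. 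The paper's own bookkeeping (the remark containing \eqref{remjf} together with \eqref{jfex1}--\eqref{jfex2}) instead reads the same number off the $r_+$-identity one step earlier, as $\langle B(\vp_0,\vq_0)\vp_0,\vp_0\rangle/\|\vp_0\|^2$ for $A(n-1)$, and then uses the explicit flow formulas of Theorem \ref{th54} to recognize this as $q_g^{(-1)}(n)p_g^{(-1)}(n)$. The two computations are equivalent, and yours is the more economical, since it needs nothing about $\cJ$ beyond the commutative diagram. The caveats you flag at the end --- that the coupling $P_+AP_-$ is rank one so it does not touch the $(-1,-1)$ diagonal entry, and the block/slot index bookkeeping --- are exactly the right things to check, and you check them correctly.
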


Now we can define the Killip-Simon class of GSMP matrices. Let $E$ be a system of $g+1$ disjoint intervals, $E=[\bb_0,\ba_0]\setminus \cup_{j=1}^g(\ba_j,\bb_j)$.
Let $V(z)=V_E(z)$ be the unique function, which was given in \eqref{eq15}.

\begin{proposition}\label{prop18}
 $\oc A\in\GSMP(\bC)$  belongs to the isospectral set  of periodic matrices $A(E,\bC)$ if and only if 
it obeys the magic formula \eqref{magic}.
\end{proposition}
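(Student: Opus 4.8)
The forward implication is essentially already contained in the functional model of Section~2, and I would begin by recording it in operator-theoretic terms. In the $f$-basis $\{f^\alpha_n\}$ the matrix $A(\alpha,\bC)$ is multiplication by the independent variable in the corresponding $L^2$ space on $E$, while the Ahlfors function $\Delta(z)$ — being inner, single valued, and having exactly $g+1$ zeros in $\bar\bbC\setminus E$ (the $g$ points $\bc_1,\dots,\bc_g$ together with the zero at $\infty$) — acts in this basis as the period shift $f^\alpha_n\mapsto f^\alpha_{n+g+1}$, i.e. as $S^{g+1}$. Since $|\Delta|=1$ on $E$ we have $1/\Delta=\bar\Delta$ there, so multiplication by $1/\Delta$ is the adjoint of multiplication by $\Delta$, namely $S^{-(g+1)}$. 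As $V=\Delta+1/\Delta$ by \eqref{eq15}, multiplication by $V(z)$ equals $S^{g+1}+S^{-(g+1)}$, which is exactly the magic formula \eqref{magic} for $\oc A=A(\alpha,\bC)$. Thus the real content of the proposition is the converse.

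For the converse, assume $\oc A\in\GSMP(\bC)$ obeys \eqref{magic}. First I would pin down the spectrum. The right-hand side $S^{g+1}+S^{-(g+1)}$ is unitarily equivalent (via the Fourier transform, split into residue classes mod $g+1$) to multiplication by $2\cos((g+1)\theta)$ and hence has spectrum $[-2,2]$; by the spectral mapping theorem and \eqref{eq13}, $\sigma(\oc A)\subseteq V^{-1}([-2,2])=E$, and the invertibility of $\bc_j-\oc A$ demanded in the definition of $\GSMP(\bC)$ is consistent with the $\bc_j$ lying in the spectral gaps. Note that $V$ is \emph{not} injective on $E$: it maps each of the $g+1$ bands homeomorphically onto $[-2,2]$, so $V(\oc A')=V(\oc A)$ alone does not determine $\oc A$, and in particular periodicity is not immediate.

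The heart of the matter is to show that \eqref{magic} forces $\oc A$ into the torus $A(E,\bC)$ of \eqref{param1}. The plan is to argue that the magic formula makes $\oc A$ \emph{reflectionless} on $E$: since $S^{g+1}+S^{-(g+1)}$ is manifestly reflectionless on $[-2,2]$, and $V$ realizes $E$ as a branched $(g+1)$-sheeted cover of $[-2,2]$ with $\Im V>0$ in the upper half plane, the boundary behaviour of the diagonal Green function of $V(\oc A)$ transports to that of $\oc A$, yielding purely imaginary normal boundary values of the resolvent of $\oc A$ on $E$. A GSMP matrix that is reflectionless on $E$, has spectrum $E$, and is built with the prescribed ordering $\bC$ of the zeros of $\Delta$ is then identified, through the functional model of Section~2, with $A(\alpha,\bC)$ for a uniquely determined $\alpha\in\bbR^g/\bbZ^g$; in particular it is automatically $(g+1)$-periodic. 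This is the GSMP analogue of the classical description of the isospectral torus as the set of reflectionless two-sided matrices with spectrum $E$, read off from the Baker--Akhiezer data \eqref{aal}--\eqref{param1}.

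The step I expect to be the main obstacle is the rigorous transport of reflectionlessness through the rational map $V$ (equivalently, reading off the character-automorphic spectral data of $\oc A$ directly from \eqref{magic}), together with the attendant classification that $A(E,\bC)$ exhausts all reflectionless GSMP matrices with spectrum $E$ and ordering $\bC$. A more economical route that avoids reflectionlessness is a direct uniqueness argument: the shifted matrix $\oc A_1:=S^{-(g+1)}\oc A S^{g+1}$ again lies in $\GSMP(\bC)$ and satisfies the same magic formula, so it suffices to prove that within $\GSMP(\bC)$ the value $V(\oc A)$ together with the banded GSMP structure determines $\oc A$ uniquely; this gives periodicity ($\oc A_1=\oc A$) and, combined with \eqref{param1}, membership in $A(E,\bC)$. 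Either way the analytic core is the same: passing, through the single-valued inner function $\Delta$ of degree $g+1$, between the banded GSMP picture and the character-automorphic Hardy-space model.
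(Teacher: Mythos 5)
Your forward implication is exactly the paper's argument: the proof of Proposition \ref{prop18} given in the text consists precisely of the observation that, in the basis \eqref{gsmpbase}, multiplication by the single-valued inner function $\Delta$ is the block shift $\Delta f_n^\alpha=f_{n+(g+1)}^\alpha$, so that $V=\Delta+1/\Delta$ from \eqref{eq15} becomes $S^{g+1}+S^{-(g+1)}$. That part of your write-up is correct and matches the paper.

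The converse, which you yourself flag as ``the real content,'' is where the proposal has genuine gaps. First, your ``more economical route'' is not salvageable as stated: you propose to show that within $\GSMP(\bC)$ the value $V(\oc A)$ together with the banded structure determines $\oc A$ uniquely, and to deduce periodicity from $V(S^{-(g+1)}\oc A S^{g+1})=V(\oc A)$. But by the forward direction every $A(\alpha,\bC)$, $\alpha\in\bbR^g/\bbZ^g$, satisfies $V(A(\alpha,\bC))=S^{g+1}+S^{-(g+1)}$, and by Theorem \ref{thm:multbyzsmp} the map $\alpha\mapsto A(\alpha,\bC)$ is (essentially) one-to-one; hence $V$ collapses a whole $g$-dimensional torus to a single operator and cannot be injective on $\GSMP(\bC)$. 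So this route proves nothing, and periodicity of $\oc A$ does not follow from it. Second, the reflectionlessness route is the right idea in spirit, but the two steps you name as the ``main obstacle'' are exactly the content of the claim and are left unproven: (a) the partial-fraction identity $(V(\oc A)-y)^{-1}=\sum_{V(x_k)=y}V'(x_k)^{-1}(\oc A-x_k)^{-1}$ only gives a \emph{sum} condition on the boundary values of the resolvents at the $g+1$ preimages, so extracting reflectionlessness of $\oc A$ at each individual $x_k$ requires using the full $(g+1)\times(g+1)$ matrix Green function of $V(\oc A)$, not just its scalar diagonal; and (b) the classification ``reflectionless GSMP matrix with spectrum $E$ and ordering $\bC$ $\Rightarrow$ member of $A(E,\bC)$'' is asserted by analogy but not derived from the functional model of Section~2. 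Note also that spectral mapping only yields $\sigma(\oc A)\subseteq E$ together with $V(\sigma(\oc A))=[-2,2]$; since $V$ maps \emph{each} band of $E$ onto $[-2,2]$, this does not yet exclude that $\sigma(\oc A)$ omits entire bands, so even the equality $\sigma(\oc A)=E$ needs an argument. To be fair, the paper's own proof is a two-line remark that makes the converse implicit in the identification of $A(E,\bC)$ with the isospectral set of periodic GSMP matrices; but as a self-contained proof your proposal establishes only the forward direction.
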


\begin{definition} Let $A\in\GSMP(\bC)$. Let $\sigma_\pm$ be the related spectral measures, that is,
\begin{equation}\label{specmes}
r_{\pm}(z)=\int \frac{d\sigma_{\pm}(x)}{x-z},
\end{equation}
where $r_{\pm}(z)$ are given in \eqref{ijf4}.
We say that $A$ belongs to the Killip-Simon class
$\KSA(E, \bC)$ if
the measures $\sigma_\pm$ are supported on $E\cup X_{\pm}$, and both satisfy \eqref{eq5}.
\end{definition}

The following theorem is just a consequence of the matrix version of Killip-Simon theorem.

\begin{theorem}
$A\in\GSMP(\bC)$ belongs to the Killip-Simon class $\KSA(E,\bC)$ if
the difference $V_E(A)-(S^{-(g+1)}+S^{g+1})$ belongs to the Hilbert-Schmidt class.
\end{theorem}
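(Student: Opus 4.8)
The plan is to reduce the two-sided statement to the half-line block setting and then invoke the matrix Killip--Simon theorem, the only genuinely new input being the change of variables $y=V_E(x)$. First I would use the decomposition \eqref{ijf3}, which exhibits $A$ as a rank-two perturbation of $A_-\oplus A_+$ localized at the vectors $\tilde e_{-1},\tilde e_0$. Since $V_E(z)=\lambda_0 z+\bc_0+\sum_{j=1}^g\lambda_j(\bc_j-z)^{-1}$ is a finite linear combination of the identity and the resolvents $(\bc_j-A)^{-1}$, the resolvent identity $(\bc_j-A)^{-1}-(\bc_j-A_-\oplus A_+)^{-1}=(\bc_j-A)^{-1}R(\bc_j-A_-\oplus A_+)^{-1}$ shows that $V_E(A)-\bigl(V_E(A_-)\oplus V_E(A_+)\bigr)$ has finite rank. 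Consequently the hypothesis that $V_E(A)-(S^{-(g+1)}+S^{g+1})$ is Hilbert--Schmidt forces each half-line block Jacobi matrix $V_E(A_\pm)$ to be a Hilbert--Schmidt perturbation of the free one-sided block shift $S_+^{g+1}+(S_+^*)^{g+1}$.

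Next I would apply the matrix version of the Killip--Simon theorem from \cite{KSDp} to each of the two $(g+1)\times(g+1)$ block Jacobi matrices $V_E(A_\pm)$; that $V_E(A_\pm)$ is block tridiagonal, i.e. genuinely $(2g+3)$-diagonal, is exactly the content of the magic formula (Proposition \ref{prop18}) and of the GSMP structure. The matrix theorem then turns the Hilbert--Schmidt condition into the statement that the matrix spectral measure $d\Sigma_\pm$, supported on $[-2,2]\cup Y_\pm$, obeys \eqref{eq12}, namely $\int_{-2}^2|\log\det\Sigma_\pm'(y)|\sqrt{4-y^2}\,dy+\sum_{y_k\in Y_\pm}\sqrt{y_k^2-4}^{\,3}<\infty$. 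At this point the operator-theoretic content is finished and what remains is a measure-transport computation.

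The heart of the proof, which I expect to be the main obstacle, is to convert \eqref{eq12} for $d\Sigma_\pm$ into the scalar condition \eqref{eq5} for $d\sigma_\pm$ under the $(g+1)$-to-one map $y=V_E(x)$ of $E$ onto $[-2,2]$. There are two ingredients. First, a determinant formula expressing $\det\Sigma_\pm'(y)$ through the product $\prod_{i=0}^{g}\sigma_\pm'(x_i)$ over the preimages $x_0,\dots,x_g\in E$ of $y$, together with Jacobian factors $|V_E'(x_i)|$, so that $\int_{-2}^2|\log\det\Sigma_\pm'(y)|\sqrt{4-y^2}\,dy$ becomes $\int_E|\log\sigma_\pm'(x)|\sqrt{\dist(x,\bbR\setminus E)}\,dx$ up to a convergent additive term from the Jacobian. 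Second, the matching of weights: using $4-V_E(x)^2=(2-V_E(x))(2+V_E(x))$ and the fact that at each band edge $V_E\mp2$ has a simple zero, one checks $\sqrt{4-y^2}\,dy\asymp\sqrt{\dist(x,\bbR\setminus E)}\,dx$ on each band and $\sum\sqrt{y_k^2-4}^{\,3}\asymp\sum\sqrt{\dist(x_k,E)}^{\,3}$ under $y_k=V_E(x_k)$. Here the representation $V_E=\Delta^{-1}+\Delta$ in terms of the Ahlfors function is convenient, since the local behavior of $\Delta$ at the band edges controls the Jacobian and the weight simultaneously. Establishing the precise link between the $(g+1)\times(g+1)$ matrix measure of $V_E(A_\pm)$ and the scalar measure $d\sigma_\pm$ is the step demanding care; the weight comparison is then a local analysis at the $2g+2$ endpoints of $E$, entirely parallel to the polynomial case of \cite{KSDp}.
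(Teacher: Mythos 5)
Your proposal follows essentially the same route as the paper: the theorem is obtained there from the matrix Killip--Simon theorem of \cite{KSDp} together with Theorem \ref{thdensks}, whose proof rests on exactly the determinant identity \eqref{densks} relating $\det\Sigma'(y)$ to $\prod_{V(x)=y}\sigma'(x)$ that you identify as the key measure-transport step, plus the local weight comparison at the band edges. The only difference is that you spell out the finite-rank reduction from the two-sided $V_E(A)$ to the half-line blocks $V_E(A_\pm)$, a detail the paper leaves implicit.
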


However, the next statement is already highly non-trivial. Practically, it gives a parametrization of the coefficients of \textit{Jacobi matrices} of Killip-Simon class with the essential spectrum on $E$, see Remark \ref{rem22}.
\begin{theorem}\label{th73} 
For $A\in \GSMP(\bC)$, let $A(n+1)=\cJ A(n)$, $A(0)=A$ and let $\{\vp_j(n),\vq_j(n)\}_{j\in \bbZ}$ be the forming $A(n)$ coefficient sequences. The given $A$ belongs to $\KSA(E,\bC)$ if and only if
\begin{eqnarray}
\{p^{(-1)}_j(n)-p^{(0)}_j(n)\}_{n\ge 0}\in \l^2_+,&
 \{q^{(-1)}_j(n)-q^{(0)}_j(n)\}_{n\ge 0}\in \l^2_+
 % \ m29
\label{m29}\\
\{\lambda_0p_g^{(0)}(n)-1\}_{n\ge0}\in \l_+^2, 
&\{\lambda_0\langle \vp_0(n),\vq_0(n) \rangle+\bc_0\}_{n\ge 0}\in \l_+^2
% \ m30 
\label{m30}\\
&\{\Lambda_k (\vp_0(n),\vq_0(n))-\lambda_k\}_{n\ge 0}\in\l^2_+
\label{m31} 
%\ m31
\end{eqnarray}
hold for all $j=0,\dots, g-1$ and all $k=1,\dots, g$.
\end{theorem}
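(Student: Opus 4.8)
The plan is to reduce the spectral membership $A\in\KSA(E,\bC)$ to a single $\l^2$–summability statement along the Jacobi flow, and then to read off the individual conditions \eqref{m29}--\eqref{m31} from an explicit formula for the discrete derivative of the Killip--Simon functional. First I would use the equivalence $A\in\KSA(E,\bC)\iff H_+(A)<\infty$, which is the content of the preceding theorem together with the definition of the functional in \eqref{eq10}: writing $W(A):=V_E(A)-(S^{-(g+1)}+S^{g+1})$, the quantity $H_+(A)$ is the $\l^2_+$–part of $\tr(W(A)^2)$. Here $W(A)$ is banded, since $A$ is $(2g+3)$–diagonal and, by the $\GSMP(\bC)$ property, each $(\bc_k-A)^{-1}$ is $(2g+3)$–diagonal after the shift $S^{\pm k}$; hence $H_+(A)=\sum_{n\ge0}\|W(A)e_n\|^2$ is a sum of nonnegative densities along the half–line.

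Next I would invoke Lemma \ref{lemjder}, which computes the discrete derivative $\delta_{\cJ}H_+(A)=H_+(A)-H_+(\cJ A)$ as a finite sum of squares depending only on the boundary coefficients $\vp_{-1},\vp_0,\vq_{-1},\vq_0$. Since the $n$–th density $\|W(A)e_n\|^2$ of $H_+(A)$ equals the boundary density of $A(n)=\cJ^{\circ n}A$ (the flow being the left shift of $\cF A$), telescoping along the flow yields the identity $H_+(A)=\sum_{n\ge0}\delta_{\cJ}H_+(A(n))$, valid whether the two sides are finite or not and with all summands nonnegative. Consequently $A\in\KSA(E,\bC)$ is equivalent to $\sum_{n\ge0}\delta_{\cJ}H_+(A(n))<\infty$, which is the reduction carried out in Section 5.

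The crux — and the step I expect to be the main obstacle — is to evaluate $\delta_{\cJ}H_+(A)$ explicitly in terms of the generating sequences $\{\vp_j,\vq_j\}$ and to identify the resulting squares with the entries of \eqref{m29}--\eqref{m31}. The difficulty is that $W(A)$ contains the resolvents $(\bc_k-A)^{-1}$, whose relevant matrix entries must be expressed through $\vp_0$ and $\vq_0$; the functions $\Lambda_k(\vp_0,\vq_0)$ in \eqref{m31} are precisely these resolvent entries, and controlling them is where the structure theory of GSMP matrices and the conformal invariance \eqref{aut1} enter. I expect the computation to produce a positive combination $\delta_{\cJ}H_+(A)=\sum_\ell c_\ell\,s_\ell(A)^2$ in which the $s_\ell(A)$ run exactly through $p^{(-1)}_j-p^{(0)}_j$ and $q^{(-1)}_j-q^{(0)}_j$ for $0\le j\le g-1$, through $\lambda_0p^{(0)}_g-1$ and $\lambda_0\langle\vp_0,\vq_0\rangle+\bc_0$, and through $\Lambda_k(\vp_0,\vq_0)-\lambda_k$ for $1\le k\le g$. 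As a consistency check that no square is missing, their simultaneous vanishing should be equivalent, via the magic formula \eqref{magic} and Proposition \ref{prop18}, to $W(A)=0$ on the boundary block, i.e. to $A$ being locally periodic; this is also what forces the weights $c_\ell$ to be strictly positive.

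Finally, evaluating this identity at $A(n)$ merely replaces each coefficient by its $n$–th iterate, so $\delta_{\cJ}H_+(A(n))=\sum_\ell c_\ell\,s_\ell(A(n))^2$. Since the weights $c_\ell$ are bounded above and below away from zero uniformly in $n$, using \eqref{n5} and the uniform GSMP bounds, the finiteness of $\sum_{n\ge0}\delta_{\cJ}H_+(A(n))$ is equivalent to the simultaneous membership of each sequence $\{s_\ell(A(n))\}_{n\ge0}$ in $\l^2_+$. These are exactly the conditions \eqref{m29}--\eqref{m31}, which completes the proof.
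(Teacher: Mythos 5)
Your overall skeleton — reducing $A\in\KSA(E,\bC)$ to $H_+(A)<\infty$ and telescoping $H_+(A)=\sum_{n\ge 0}\delta_{\cJ}H_+(A(n))$ via Lemma \ref{lemjder} — is exactly the paper's strategy, and that part is fine. The genuine gap is the step you yourself flag as "the crux'': you assert, without proof, that $\delta_{\cJ}H_+(A)$ is a positive combination $\sum_\ell c_\ell\,s_\ell(A)^2$ where the $s_\ell$ run \emph{exactly} through the quantities in \eqref{m29}--\eqref{m31}. That identity is not true in that form and is not how the argument goes. What Lemma \ref{lemjder} actually produces is $\tfrac12\|V(\cJ A)e_{-1}\|^2-1-\log(\cJ v)^{(-1)}_{g,g}(\cJ v)^{(0)}_{g,g}$, i.e.\ squares of entries of $V(A(n))$ (the $w^{(j)}_{l,g}$ and $v^{(j)}_{l,l}$), which are complicated rational functions of the $\vp,\vq$ coefficients. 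Converting the $\l^2$-summability of \emph{those} into \eqref{m29}--\eqref{m31} is the hard content of the theorem, and it requires machinery you have not supplied: the sublemma on rotation matrices (Lemma \ref{lem52}) applied to the commutation relation between the orthogonal factors $O(\vp)$ and the triangular blocks of $V(A)$ to extract $\{p^{(-1)}_j(n)-p^{(0)}_j(n)\}\in\l^2_+$; and, for \eqref{m31}, the two-step argument in which one first obtains only $\{(\Lambda^\#_{-1,g}(n)-\lambda_g)\,p^{(-1)}_{g-1}(n)\}\in\l^2_+$ — which does \emph{not} yield $\{\Lambda^\#_{-1,g}(n)-\lambda_g\}\in\l^2_+$ because $p^{(-1)}_{g-1}(n)$ may approach zero — and must then combine the flow recursion $\Lambda^\#_{-1,g}(n+1)=\frac{\cos\phi^{-1}_g(n)}{\cos\phi^{-2}_g(n)}\Lambda^\#_{-1,g}(n)$ with the companion estimate against $q^{(-1)}_{g-1}(n)$ and the lower bound $\inf_n\bigl((p^{(-1)}_{g-1}(n))^2+(q^{(-1)}_{g-1}(n))^2\bigr)>0$. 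Your "consistency check'' (simultaneous vanishing of the squares forces local periodicity) is a plausibility heuristic, not a substitute for these computations.

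The converse direction has the same problem: to go from \eqref{m29}--\eqref{m31} back to $V(A)-(S^{-(g+1)}+S^{g+1})$ being Hilbert--Schmidt, one must show that every entry $w^{(-1)}_{l,g}(n)$, $v^{(-1)}_{g,g}(n)$, $v^{(0)}_{g,l}(n)$ of the boundary blocks is an $\l^2_+$-sequence, which the paper does by explicitly computing the resolvent vectors $(\bc_k-A)^{-1}e_{k-1}$ and $(\bc_k-A)^{-1}e_{-1}$ (Lemma \ref{lem:gsmpEntries} and \eqref{altd2bis}) and expressing the entries through the Blaschke--Potapov products and the $\Lambda^\#_{j,k}$. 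Without establishing these explicit formulas — or your claimed sum-of-squares identity, which would have to be verified entry by entry against them — neither implication of the theorem is actually proved.
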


\begin{remark}\label{rem22}
We define the Killip-Simon class of Jacobi matrices by the spectral property \eqref{eq5}.
Since the spectral data of $J_+$ coincides with the spectral data of $A_+$,  a combination of 
Corollary \ref{cor113} (see \eqref{coefflow}), and the above theorem (see \eqref{m29}-\eqref{m31}) gives
 a parametric representation for matrices from $\KS(E)$. With a certain effort we can derive
 \eqref{132}-\eqref{133} from this. This is probably the central point of our GSMP matrices approach: from the very beginning we can see clearly a certain set of polynomials
 $\Lambda_k(\vp(n),\vq(n))$ which are constants modulo $\l^2_+$-sequences \eqref{m31}. So, as soon as such an expression appears in a numerator or a denominator of any rational function of the coefficients of $A(n)$, it can be replaced by a positive constant $\lambda_k$ (modulo an $\l^2_+$-sequence). Otherwise, it is really hard to imagine, how one can simplify conditions of the form \eqref{inhs} to that ones that deal with any explicit properties of coefficients of $J\in \KS(E)$, see the discussion on higher-order sum rules in the end of the previous subsection.
 \end{remark}

\section{Functional models for $J(E)$ and $A(E,\bC)$. \\ Jacobi flow on periodic GSMP matrices}

\subsection{ Hardy spaces and class $J(E)$}

In what follows, we will use functional models for the class of reflectionless matrices $J(E)$ in the form as considered in \cite{SY}. To this end, we need
to recall certain special functions related to function theory in
 the common resolvent domain $\Omega=\overline{\mathbb C}\setminus E$ for $J\in J(E)$. Note that in this case, $E$ can be a set of an essentially more complicated structure \cite{Has, Pom, WID}, than
 a system of intervals.  

Let $\mathbb D/\Gamma\simeq\overline{\mathbb C}\setminus E$ be a uniformization of the domain $\Omega$. It means that there exists a Fuchsian
group $\Gamma$ and a meromorphic function $\fz:\mathbb D\rightarrow\overline{\mathbb C}\setminus E$, $\fz\circ\gamma=\fz$ for all
$\gamma\in\Gamma$, such that
\begin{equation*}
 \forall z\in\overline{\mathbb C}\setminus E~\exists\zeta\in\mathbb D\!:~\fz(\zeta)=z \text{ and } \fz(\zeta_1)=\fz(\zeta_2)\Rightarrow
\zeta_1=\gamma(\zeta_2).
\end{equation*}
We assume that $\fz$ meets the normalization $\fz(0)=\infty$, $(\zeta \fz)(0)>0$.

Let $\Gamma^{*}$ be the group of characters of the discrete group $\Gamma$,
$$
\Gamma^*=\{\alpha|\ \alpha:\Gamma\to \bbR/\bbZ\ \text{such that}\ \alpha(\gamma_1\gamma_2)=\alpha(\gamma_1)+\alpha(\gamma_2)\}
$$
Since $\Gamma$ is formed by $g$ independent generators, say $\{\oc \gamma_j\}_{j=1}^g$, the group $\Gamma^*$ is equivalent to $\bbR^g/\bbZ^g$,
\begin{equation}\label{aleqal}
\alpha\simeq\{\alpha(\oc \gamma_1),\dots, \alpha(\oc\gamma_g)\}\in\bbR^g/\bbZ^g.
\end{equation}

\begin{definition}
 \label{def:htwo}
For $\alpha\in\Gamma^*$
we define the Hardy  space of character automorphic functions as
\begin{equation*}
H^2(\alpha) = H^2_{\Omega}(\alpha) = \{ f \in H^2\!:~ f \circ \gamma = e^{2\pi i\alpha(\gamma)} f,~\gamma\in\Gamma \},
\end{equation*}
where $H^2$ denotes the standard Hardy class in $\mathbb D$.
\end{definition}

Fix $z_0\in\Omega$ and let $\text{\rm orb}(\zeta_0)=\fz^{-1}(z_0)=\{\gamma(\zeta_0)\}_{\gamma\in\Gamma}$. The Blaschke product $b_{z_0}$ with zeros at
$\fz^{-1}(z_0)$ is called the Green function of the group $\Gamma$ (cf.~\cite{SY}). It is related to the standard Green
function $G(z,z_0)$ in the domain $\Omega$ by
\begin{equation*}
\log \frac{1}{|b_{z_0}(\zeta)|} = G\left(\fz(\zeta),z_0\right).
\end{equation*}
The function $b_{z_0}$ is character automorphic, that is, $b_{z_0}\circ\gamma=e^{2\pi i\mu_{z_0}}b_{z_0}$, where $\mu_{z_0}\in\Gamma^{*}$. For $b_{z_0}$ we fix the normalization $b_{z_0}(0)>0$ if $z_0\not=\infty$ and $(\fz b)(0)>0$
for the Blaschke product $b$ related to infinity.

We define $k_{\zeta_0}^{\alpha}(\zeta)=k^{\alpha}(\zeta,\zeta_0)$ as the reproducing kernel of the space $H^2(\alpha)$, that is,
\begin{equation*}
 \left\langle f, k_{\zeta_0}^{\alpha}\right\rangle = f(\zeta_0)\quad \forall f\in H^2(\alpha).
\end{equation*}
\begin{remark}\label{rema22}
Let us point out that in our case this reproducing kernels possess a representation by means of $\theta$ functions associated with the given Riemann surface \cite{Fay}. As already mentioned, $k^\alpha$ has sense in a much more general situation, say, domains of Widom type. Although, generally speaking,  they can not be represented via $\theta$ functions, they still play a role of special functions in the related problems.
\end{remark}

Let $k^{\alpha}(\zeta)=k_{0}^{\alpha}(\zeta)$, $b(\zeta)=b_{\fz(0)}(z)$, and  $\mu=\mu_{\fz(0)}$. We have an evident decomposition
\begin{equation}\label{ort1}
 H^{2}(\alpha)=\{e^{\alpha}\}\oplus b H^2(\alpha-\mu), \quad e^{\alpha}=\frac{k^{\alpha}(\zeta)}{\sqrt{k^{\alpha}(0)}}.
\end{equation}
This decomposition plays an essential role in the proof of the following theorem.

\begin{theorem}
 \label{thm:onb}
The system of functions 
\begin{equation}\label{defeal}
 e_{n}^{\alpha}(\zeta)=b^{n}(\zeta)\frac{k^{\alpha-n\mu}(\zeta)}{\sqrt{k^{\alpha -n\mu}(0)}}
\end{equation}

\begin{itemize}
 \item[(i)] forms an orthonormal basis in $H^2(\alpha)$ for $n\in \mathbb N$ and
 \item[(ii)] forms an orthonormal basis in $L^2(\alpha)$ for $n\in\mathbb Z$,
\end{itemize}
where 
\begin{equation*}
L^2(\alpha) =  \{ f \in L^2\!:~ f \circ \gamma = e^{2\pi i\alpha(\gamma)} f,~\gamma\in\Gamma \}.
\end{equation*}
\end{theorem}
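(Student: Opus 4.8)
The plan is to build both bases by iterating the single orthogonal decomposition \eqref{ort1}, treating multiplication by the Blaschke product $b$ as the exact analog of multiplication by $z$ in the classical disc. First I would record the elementary properties of the candidate vectors. Since $e^{\alpha-n\mu}=k^{\alpha-n\mu}/\sqrt{k^{\alpha-n\mu}(0)}$ lies in $H^2(\alpha-n\mu)$ and $b\circ\gamma=e^{2\pi i\mu(\gamma)}b$, multiplication by $b^{n}$ carries $H^2(\alpha-n\mu)$ into $H^2(\alpha)$, so $e_n^\alpha=b^n e^{\alpha-n\mu}\in H^2(\alpha)$ for $n\ge 0$ by \eqref{defeal}; and because $|b|=1$ a.e.\ on $\bbT$, multiplication by $b^n$ is an isometry on $L^2$, whence $\|e_n^\alpha\|=\|e^{\alpha-n\mu}\|=1$. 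Applying \eqref{ort1} with $\alpha$ replaced by $\alpha-n\mu$ and multiplying the resulting splitting by the isometry $b^n$ gives the telescoping identity
\[
b^{n}H^2(\alpha-n\mu)=\{e_n^\alpha\}\oplus b^{n+1}H^2(\alpha-(n+1)\mu),
\]
and iterating from $n=0$ yields, for every $N$,
\[
H^2(\alpha)=\Big(\bigoplus_{n=0}^{N-1}\{e_n^\alpha\}\Big)\oplus b^{N}H^2(\alpha-N\mu),
\]
which at once delivers the mutual orthonormality of $\{e_n^\alpha\}_{n\ge 0}$.

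For part (i) it then remains to show that the remainder $b^NH^2(\alpha-N\mu)$ shrinks to $\{0\}$. Every element of this space is divisible by $b^N$, hence vanishes at $\z=0$ to order at least $N$ (recall that $b=b_{\fz(0)}$ has a simple zero at $0$, since $\fz(0)=\infty$ and the uniformization is unbranched). A function in $H^2\subset\mathrm{Hol}(\bbD)$ vanishing to infinite order at the interior point $0$ is identically zero, so $\bigcap_N b^NH^2(\alpha-N\mu)=\{0\}$ and $\{e_n^\alpha\}_{n\ge 0}$ is a complete orthonormal system in $H^2(\alpha)$.

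For part (ii) I would first promote orthonormality to all integer indices. Given $n,m\in\bbZ$, set $p=\min(n,m)$ and factor out the $L^2$-isometry $b^{p}$: since $b^{\,n-p}e^{\alpha-n\mu}=e_{n-p}^{\,\alpha-p\mu}$ and likewise for $m$, unitarity of $b^p$ reduces $\langle e_n^\alpha,e_m^\alpha\rangle_{L^2}$ to $\langle e_{n-p}^{\,\alpha-p\mu},e_{m-p}^{\,\alpha-p\mu}\rangle$ with nonnegative indices, where it equals $\delta_{n,m}$ by (i). The same isometry identifies $\overline{\mathrm{span}}\{e_n^\alpha:n\ge -N\}=b^{-N}H^2(\alpha+N\mu)$: for $n\ge -N$ one has $b^N e_n^\alpha=e_{N+n}^{\,\alpha+N\mu}$, and these span $H^2(\alpha+N\mu)$ by (i). These subspaces increase with $N$ because $bH^2(\alpha+N\mu)\subset H^2(\alpha+(N+1)\mu)$, so completeness of $\{e_n^\alpha\}_{n\in\bbZ}$ in $L^2(\alpha)$ is equivalent to density of $\bigcup_{N\ge 0}b^{-N}H^2(\alpha+N\mu)$.

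This last density is the main obstacle: it is the character-automorphic substitute for completeness of $\{z^n\}_{n\in\bbZ}$ in $L^2(\bbT)$, and it is not formal, requiring input from the function theory of $\Omega=\bar\bbC\setminus E$. I would prove it by identifying the orthogonal complement. A function $f\in L^2(\alpha)$ annihilating every $b^{-N}H^2(\alpha+N\mu)$ satisfies $b^Nf\perp H^2(\alpha+N\mu)$ for all $N$, i.e.\ each $b^Nf$ lies in the anti-holomorphic part $L^2(\alpha+N\mu)\ominus H^2(\alpha+N\mu)$. Writing this complement as $\overline{bH^2(-\alpha-(N+1)\mu)}$ — the character version of $L^2\ominus H^2=\overline{zH^2}$, which itself follows from \eqref{ort1} applied to $-\alpha$ together with the F.\ and M.\ Riesz theorem — forces $b^{-N}\bar f$ to be boundary values of $H^2$-functions for every $N$; since $E$ is of Widom type the harmonic measure is non-degenerate (the Direct Cauchy Theorem framework of \cite{SY} applies), and such an $f$ must vanish. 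I expect the careful bookkeeping of characters through this exhaustion, rather than any single sharp estimate, to be the delicate point of the argument.
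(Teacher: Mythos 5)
Your argument is correct and follows essentially the same route as the paper, whose proof is only a pointer: item (i) is obtained exactly as you do, by iterating the decomposition \eqref{ort1} and killing the residual subspace via the order of vanishing of $b^N$ at the origin, while item (ii) is delegated to \cite[Theorem E]{SY}, whose proof proceeds precisely through the exhaustion $\bigcup_N b^{-N}H^2(\alpha+N\mu)$ and the description of $L^2(\alpha)\ominus H^2(\alpha)$ that you reconstruct. Your character bookkeeping and the reduction of orthonormality for $n\in\bbZ$ to the case $n\in\bbN$ by the unimodularity of $b$ on the boundary are both sound, so this is a faithful filling-in of the details the paper leaves to the reference.
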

\begin{proof}
 Item (i) follows from the above paragraphs and a proof for (ii) in a much more general case can be found in \cite[Theorem E]{SY}.
\end{proof}

The following theorem describes all elements of $J(E)$ for a given finite-gap set $E$.

\begin{theorem}
 \label{thm:multbyz}
The multiplication operator by $\fz$ in $L^2(\alpha)$ with respect to the basis $\{e_n^{\alpha}\}$ from Theorem~\ref{thm:onb}
is the following Jacobi matrix $J=J(\alpha)$:
\begin{equation*}
 \fz e_{n}^{\alpha}=a(n;\alpha)e_{n-1}^{\alpha} + b(n;\alpha)e_{n}^{\alpha}+a(n+1;\alpha)e^{\alpha}_{n+1},
\end{equation*}
where
\begin{equation*}
 a(n;\alpha)=\mathcal A(\alpha-n\mu), \quad\mathcal A(\alpha)=(\fz b)(0)\sqrt{\frac{k^{\alpha}(0)}{k^{\alpha+\mu}(0)}}
\end{equation*}
and 
\begin{equation*}
 b(n;\alpha)=\mathcal B(\alpha-n \mu),~~ \mathcal B(\alpha)=\frac{\fz b(0)}{b^{\prime}(0)}+
\left\{  \frac{\left(k^{\alpha}\right)^{\prime}(0)}{k^{\alpha}(0)}- \frac{\left(k^{\alpha+\mu}\right)^{\prime}(0)}{k^{\alpha+\mu}(0)}\right\}
+\frac{\left(\fz b\right)^{\prime}(0)}{b^{\prime}(0)}.
\end{equation*}
This Jacobi matrix $J(\alpha)$ belongs to $J(E)$. Thus, we have a map from $\Gamma^{*}$ to $J(E)$. Moreover, this map is one-to-one.
\end{theorem}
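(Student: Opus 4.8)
The plan is to compute the matrix of the bounded self-adjoint operator $M_\fz$ of multiplication by $\fz$ in the orthonormal basis $\{e_n^\alpha\}$ of $L^2(\alpha)$, read off its entries, and then settle membership in $J(E)$ and injectivity. First I would record that, since $\fz$ maps $\partial\bbD$ onto $E\subset\bbR$, its boundary values are real and bounded; hence $M_\fz$ is bounded and self-adjoint, and its matrix in $\{e_n^\alpha\}$ is automatically real symmetric. The tridiagonal structure is the first real point. Iterating the splitting \eqref{ort1} I introduce the nested subspaces $\cL_n:=b^nH^2(\alpha-n\mu)\subset L^2(\alpha)$, $n\in\bbZ$, whose consecutive differences are exactly the lines spanned by the $e_n^\alpha$. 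The key remark is that $\fz b$ is a \emph{bounded} character-automorphic function of character $\mu$: the simple pole of $\fz$ over $\infty$ is cancelled by the zero of $b$, so $\fz b\in H^\infty(\mu)$. Writing $\fz b^{n+1}f=b^n(\fz b\,f)$ with $\fz b\,f\in H^2(\alpha-n\mu)$ shows $\fz\,\cL_{n+1}\subseteq\cL_n$, and combined with self-adjointness this forces $\langle M_\fz e_n^\alpha,e_m^\alpha\rangle=0$ for $|m-n|\ge2$.

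Next I would exploit quasi-periodicity. Because $|b|=1$ on $\partial\bbD$, multiplication by $b$ is an isometry; substituting $e_n^\alpha=b^ne^{\alpha-n\mu}$ into $\langle M_\fz e_n^\alpha,e_n^\alpha\rangle$ and $\langle M_\fz e_n^\alpha,e_{n+1}^\alpha\rangle$ and cancelling the factors $b^n$ reduces both to inner products depending only on the shifted character $\alpha-n\mu$. This is precisely why the diagonal and off-diagonal entries have the form $b(n;\alpha)=\cB(\alpha-n\mu)$ and $a(n;\alpha)=\cA(\alpha-n\mu)$ for two scalar functions $\cB,\cA$ on $\Gamma^*$.

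It then remains to evaluate $\cA$ and $\cB$ explicitly via the reproducing property at $0$. For the off-diagonal entry, with $\beta=\alpha-n\mu$ the vector $g:=b\,k^{\beta-\mu}\in H^2(\beta)$ vanishes at $0$, so $\fz g\in H^2(\beta)$ (the zero of $g$ absorbs the pole of $\fz$); using that $\fz$ is real on $\partial\bbD$ I can move $\fz$ to the other slot and apply reproduction, $\langle\fz k^\beta,g\rangle=(\fz g)(0)=(\fz b)(0)\,k^{\beta-\mu}(0)$, and after normalization this yields the stated $\cA$. Since $(\fz b)(0)>0$ by the chosen normalizations and all $k^\bullet(0)>0$, the off-diagonal entries are positive. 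For the diagonal entry I would Laurent-expand $\fz$ at $0$, discard the principal part (orthogonal to $H^2(\beta)$), and collect the constant term; this produces the logarithmic-derivative combination $(k^\beta)'(0)/k^\beta(0)$ of the reproducing kernel together with the contributions $(\fz b)(0)/b'(0)$ and $(\fz b)'(0)/b'(0)$, i.e. the announced $\cB$.

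Finally, $J(\alpha)\in J(E)$ follows from the model itself: the spectrum of $M_\fz$ is the essential range of $\fz|_{\partial\bbD}$, which is $E$, while the character-automorphic structure of $H^2(\alpha)$ forces the reflectionless property on $E$ that defines the finite-gap class (here I would invoke \cite{SY}); injectivity of $\alpha\mapsto J(\alpha)$ follows because $J(\alpha)$ determines its half-line spectral measure and hence the Dirichlet divisor in the gaps, which the Abel map sends back to $\alpha$ injectively, cf. \cite{SY,GT}. I expect the main obstacle to be the explicit computation of the diagonal coefficient $\cB$: the pole of $\fz$ over $\infty$ means one must track the Taylor data of $\fz$, $b$ and $k^\beta$ at $0$ and carefully separate the constant term from the principal part, whereas the tridiagonality and the off-diagonal coefficient become routine once $\fz b\in H^\infty(\mu)$ is recognized.
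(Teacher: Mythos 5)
The paper itself contains no proof of Theorem~\ref{thm:multbyz}: it is recalled from the functional-model literature (cf.\ \cite{SY}, \cite{GT}), so there is no internal argument to measure yours against. Your outline is essentially the standard derivation and its skeleton is sound: recognizing $\fz b\in H^\infty(\mu)$ gives $\fz\,\cL_{n+1}\subseteq\cL_n$ for the nested spaces $\cL_n=b^nH^2(\alpha-n\mu)$, which together with self-adjointness yields tridiagonality; unitarity of multiplication by $b$ reduces every entry to the case $n=0$ and produces the quasi-periodicity $a(n;\alpha)=\cA(\alpha-n\mu)$, $b(n;\alpha)=\cB(\alpha-n\mu)$ (this is exactly \eqref{sjm}); and the reproducing property at $0$ gives $\cA$, with positivity from the normalizations $(\fz b)(0)>0$, $k^{\alpha}(0)>0$. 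Delegating $J(\alpha)\in J(E)$ and injectivity to the reflectionless/Abel-map machinery of \cite{SY,GT} is legitimate, since that is where the paper itself points.

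The one step that does not work as written is the computation of $\cB$ --- precisely the step you flag as the main obstacle. You propose to Laurent-expand $\fz$ at $\zeta=0$ and ``discard the principal part (orthogonal to $H^2(\beta)$)''. But $\zeta^{-1}H^2$ is \emph{not} orthogonal to $H^2$ (e.g.\ $\langle \zeta^{-1}\cdot\zeta,1\rangle=1$), and $\fz k^\beta$ has poles at every point of the orbit of $0$, not only at $0$; subtracting the singular part at $\zeta=0$ alone destroys automorphy and leaves the remaining poles. What is actually orthogonal to $H^2(\beta)$ is the full component of $\fz k^\beta$ along $e_{-1}^\beta=b^{-1}k^{\beta+\mu}/\sqrt{k^{\beta+\mu}(0)}$. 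The clean repair uses the tridiagonality you have already established: write $\fz e_0^\beta=\cA(\beta)e_{-1}^\beta+\cB(\beta)e_0^\beta+\cA(\beta-\mu)e_1^\beta$, multiply through by $b$ so that every term lies in $H^2$, and match the zeroth and first Taylor coefficients at $\zeta=0$ (using $b(0)=0$). The zeroth coefficient re-derives $\cA$; the first gives
\[
\cB(\beta)=\frac{(\fz b)'(0)}{b'(0)}+\frac{(\fz b)(0)}{b'(0)}\Bigl(\frac{(k^{\beta})'(0)}{k^{\beta}(0)}-\frac{(k^{\beta+\mu})'(0)}{k^{\beta+\mu}(0)}\Bigr).
\]
Note that this is not literally the displayed formula in the statement, where the bracket carries no prefactor and a separate additive constant $(\fz b)(0)/b'(0)$ appears. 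For $E=[-2,2]$ one has $\fz=\zeta+\zeta^{-1}$, $b=\zeta$, $k^\alpha\equiv 1$, so the displayed formula returns $\cB=1$ whereas the paper's own subsequent remark asserts $\cB=0$; the displayed version therefore contains a typo, and the formula above is the intended one. So a correctly executed version of your computation would in fact reproduce (and correct) the statement.
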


\begin{remark}
Using the representation of the reproducing kernels via $\theta$ functions, see Remark \ref{rema22},
 one gets $\cA(\alpha)$ and $\cB(\alpha)$ in  the form
\eqref{aal} and \eqref{bal}, respectively.
\end{remark}

\begin{remark}
The following important relation is
an immediate  consequence of the above functional model 
\begin{equation}\label{sjm}
S^{-1}J(\alpha) S=J(\alpha-\mu), \quad S e_n:= e_{n+1}.
\end{equation}
In particular, $J(\alpha)$ is periodic if and only if $N\mu=\mathbf{0}_{\Gamma^*}$ for a certain positive integer $N$.
\end{remark}

\subsection{Class $A(E,\bC)$ and Jacobi flow}

Now we turn to the functional model for $A(E,\bC)$.
The rational function $V(z)$ and the in $\bar\bbC\setminus E$ single valued function $\Delta(z)$ were defined in
\eqref{eq14}-\eqref{eq15}.
Let us list characteristic properties of $\Delta(z)$:
\begin{itemize}
 \item[(i)] $|\Delta|<1$ in $\Omega$ and $|\Delta|=1$ on $E$,
 \item[(ii)] $\Delta(\infty)=\Delta(\bc_j)=0$, $1\le j\le g$, otherwise $\Delta(z)\not=0$.
\end{itemize}
All this implies that $\Delta(z)$ is given by $\Delta(\fz(\zeta))=b(\zeta)\prod_{j=1}^g b_{\bc_j}(\zeta)$. In particular, $\mu+\sum^g_{j=1}\mu_{\bc_j}=\mathbf{0}_{\Gamma^*}$.

Let us fix $\zeta_{j}\in\bbD$ such that $\fz(\zeta_{j})=\bc_j$ and $\oc\gamma_j(\zeta_{j})=\bar \zeta_{j}$ for the generator $\oc\gamma_j$ of the group $\Gamma$. In order to construct a functional model for operators from $A(E,\bC)$, we start with the following counterpart of the orthogonal decomposition \eqref{ort1}:
\begin{equation}\label{smpbase0}
H^2(\alpha)=\{k^{\alpha}_{\zeta_{1}},\dots, k^{\alpha}_{\zeta_{g}}, k^{\alpha}\}\oplus\Delta H^2(\alpha)
=\{f^{\alpha}_0\}\oplus\dots\oplus\{f^{\alpha}_g\}\oplus\Delta H^2(\alpha),
\end{equation}
where
\begin{equation}\label{smpbase}
f_0^{\alpha}=\frac{e^{-\pi i\alpha(\oc\gamma_1)}k^\alpha_{\zeta_{1}}}{\sqrt{k^\alpha_{\zeta_{1}}(\zeta_{1})}}, \
f_1^\alpha=\frac{e^{-\pi i(\alpha-\mu_{\bc_1})(\oc\gamma_2)}b_{\bc_1} k_{\z_{2}}^{\alpha-\mu_{\bc_1}}}{\sqrt{k_{\z_{2}}^{\alpha-\mu_{\bc_1}}(\z_{2})}},...,
\ f_g^\alpha=\frac{\prod_{j=1}^g b_{\bc_j} k^{\alpha+\mu}}{\sqrt{k^{\alpha+\mu}(0)}}.
\end{equation}

\begin{theorem}
 \label{thm:onbsmp}
The system of functions 
\begin{equation}\label{gsmpbase}
f_{n}^{\alpha}=\Delta^m f_j^\alpha,\quad 
 n=(g+1)m+j, \ j\in[0,\dots,g]
\end{equation}
\begin{itemize}
 \item[(i)] forms an orthonormal basis in $H^2(\alpha)$ for $n\in \mathbb N$ and
 \item[(ii)] forms an orthonormal basis in $L^2(\alpha)$ for $n\in\mathbb Z$.
\end{itemize}
\end{theorem}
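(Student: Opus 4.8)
The plan is to deduce the theorem from a single finite-dimensional orthogonal decomposition, namely \eqref{smpbase0}, and then to iterate it using the inner function $\Delta$. Recall the features of $\Delta$ recorded above: it is character-automorphic with \emph{trivial} character (because $\mu+\sum_{j=1}^g\mu_{\bc_j}=\mathbf{0}_{\Gamma^*}$), so multiplication by $\Delta$ maps $H^2(\alpha)$ into itself; it is inner ($|\Delta|<1$ in $\Omega$, $|\Delta|=1$ on $E$), so this multiplication is an isometry on $H^2(\alpha)$ and a unitary on $L^2(\alpha)$; and it factors as $\Delta(\fz(\zeta))=b(\zeta)\prod_{j=1}^g b_{\bc_j}(\zeta)$. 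Once \eqref{smpbase0} is in hand, writing $K_\alpha:=\{f_0^\alpha\}\oplus\dots\oplus\{f_g^\alpha\}$ for the $(g+1)$-dimensional orthogonal complement of $\Delta H^2(\alpha)$, the systems in \eqref{gsmpbase} are exactly $\{\Delta^m K_\alpha\}_{m\ge 0}$ (for (i)) and $\{\Delta^m K_\alpha\}_{m\in\bbZ}$ (for (ii)), so everything reduces to the standard wandering-subspace structure of an inner function.

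First I would establish \eqref{smpbase0} by peeling off one Blaschke factor at a time, iterating the elementary decomposition \eqref{ort1}. For a point $\bc_j$ the reproducing kernel $k^\alpha_{\zeta_j}$ is orthogonal to every function of $H^2(\alpha)$ vanishing at $\zeta_j$, and those are precisely the elements of $b_{\bc_j}H^2(\alpha-\mu_{\bc_j})$; hence the analogue of \eqref{ort1} at $\bc_j$ reads $H^2(\alpha)=\{k^\alpha_{\zeta_j}\}\oplus b_{\bc_j}H^2(\alpha-\mu_{\bc_j})$. Applying this successively at $\bc_1,\dots,\bc_g$ (multiplying the $j$-th remainder through by $b_{\bc_1}\cdots b_{\bc_{j}}$) peels off one-dimensional summands spanned by the $f_j^\alpha$ of \eqref{smpbase} and leaves $\prod_{j=1}^g b_{\bc_j}\,H^2(\alpha-\sum_{j}\mu_{\bc_j})$. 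Since $\sum_j\mu_{\bc_j}=-\mu$, this remainder equals $\prod_{j=1}^g b_{\bc_j}\,H^2(\alpha+\mu)$; a final application of \eqref{ort1} with character $\alpha+\mu$ splits off $f_g^\alpha=\prod_j b_{\bc_j}k^{\alpha+\mu}/\sqrt{k^{\alpha+\mu}(0)}$ and leaves $\prod_j b_{\bc_j}\cdot b\cdot H^2(\alpha)=\Delta H^2(\alpha)$. The identities $\|k^\alpha_{\zeta_j}\|^2=k^\alpha_{\zeta_j}(\zeta_j)$ together with the unimodularity of the Blaschke factors and of the phase prefactors appearing in \eqref{smpbase} make each $f_j^\alpha$ a unit vector, and the nesting of the remainder spaces makes them mutually orthogonal; this is exactly \eqref{smpbase0}.

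With \eqref{smpbase0} proved, part (i) follows by iteration: applying the isometry ``multiplication by $\Delta$'' repeatedly gives $H^2(\alpha)=\bigoplus_{m\ge 0}\Delta^m K_\alpha$, provided $\bigcap_{m\ge 0}\Delta^m H^2(\alpha)=\{0\}$. The latter is the standard fact that a nonconstant inner function admits no nonzero common multiple of all its powers (if $f=\Delta^m g_m$ with $\|g_m\|=\|f\|$ for every $m$, then $f$ vanishes to infinite order, so $f\equiv 0$). Re-indexing $\Delta^m f_j^\alpha$ by $n=(g+1)m+j$ yields the claimed orthonormal basis. For part (ii), multiplication by $\Delta$ is now unitary on $L^2(\alpha)$, so the subspaces $\{\Delta^m K_\alpha\}_{m\in\bbZ}$ remain mutually orthogonal; their closed span contains $\Delta^{-M}H^2(\alpha)$ for every $M$, hence $\bigcup_{M\ge 0}\Delta^{-M}H^2(\alpha)$, which is dense in $L^2(\alpha)$. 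Thus $L^2(\alpha)=\bigoplus_{m\in\bbZ}\Delta^m K_\alpha$ and $\{f^\alpha_n\}_{n\in\bbZ}$ is an orthonormal basis; alternatively one may invoke the general statement \cite[Theorem E]{SY} exactly as in the proof of Theorem~\ref{thm:onb}.

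The norm and character bookkeeping in the peeling step is routine; the one genuinely substantive point is completeness in $L^2(\alpha)$, i.e. the density of $\bigcup_{M}\Delta^{-M}H^2(\alpha)$. I expect this to be the main obstacle, since it is precisely where the inner (rather than merely analytic) character of $\Delta$ is used, and where, in the general Widom-type setting alluded to in Remark~\ref{rema22}, one must appeal to the deeper function-theoretic results of \cite{SY} rather than to an elementary Beurling argument.
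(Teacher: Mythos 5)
Your proof is correct and takes essentially the same route as the paper's (two-line) proof: item (i) is deduced from the decomposition \eqref{smpbase0} iterated via the isometry of multiplication by the inner, character-trivial function $\Delta$, and item (ii) rests on the description of $L^2(\alpha)\ominus H^2(\alpha)$ from \cite{SY}, exactly as the paper indicates. You in fact supply more detail than the paper, which states \eqref{smpbase0} without proof; your peeling argument for it and the observation that $\bigcap_{m\ge 0}\Delta^m H^2(\alpha)=\{0\}$ are both correct and correctly identify the $L^2$-completeness in (ii) as the only step requiring the deeper input from \cite{SY}.
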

\begin{proof}
 Item (i) follows from \eqref{smpbase0} and for (ii) we have to use the description of the orthogonal complement 
 $L^2(\alpha)\ominus H^2(\alpha)$, see \cite{SY}.
\end{proof}

Similarly as we had before, this allows us to parametrize all elements of $A(E,\bC)$ for a given  $E$ by the 
characters of $\Gamma^*$.

\begin{theorem}
 \label{thm:multbyzsmp}
In the above notations
the multiplication operator by $\fz$ with respect to the basis $\{f_n^{\alpha}\}$ is a GSMP matrix $A(\alpha;\bC)\in A(E,\bC)$.
Moreover, this map $\Gamma^*\to A(E;\bC)$ is one-to-one up to the identification 
$$
(p_j,q_j)\mapsto (-p_j,-q_j) \ \text{in} \ A(E;\bC), \quad 0\le j\le g-1.
$$
\end{theorem}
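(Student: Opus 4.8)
The plan is to read off the matrix $A(\alpha;\bC)$ of the bounded self-adjoint operator $M_\fz$ (multiplication by $\fz$ in $L^2(\alpha)$, self-adjoint because $\fz$ takes real values in $E$ on $\partial\bbD$ and bounded because $E$ is bounded) in the basis $\{f_n^\alpha\}$ from Theorem \ref{thm:onbsmp}, and to identify it with an element of $A(E,\bC)$ in four moves: (i) show the matrix is GSMP-structured, i.e.\ has the block form \eqref{n1}--\eqref{n3}; (ii) check the GSMP-class conditions on the resolvents $(\bc_k-A)^{-1}$; (iii) verify the magic formula \eqref{magic}; and then (iv) invoke Proposition \ref{prop18}. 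The injectivity statement is handled separately at the end.

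For step (i) I would work with the orthogonal decomposition \eqref{smpbase0} together with its $\Delta$-translates, which (since $\Delta\circ\fz=b\prod_{j=1}^g b_{\bc_j}$ is inner) give $L^2(\alpha)=\bigoplus_{m\in\bbZ}\Delta^m\{f_0^\alpha,\dots,f_g^\alpha\}$ with $\Delta^m f_j^\alpha=f_{(g+1)m+j}^\alpha$. Because $M_\Delta$ is then the pure shift $S^{g+1}$ in the $f$-basis, it suffices to expand $\fz f_j^\alpha$, $0\le j\le g$, in the basis. The matrix entries are computed from the reproducing-kernel nature of $f_0^\alpha,\dots,f_{g-1}^\alpha$ (centred at $\zeta_1,\dots,\zeta_g$) and of $f_g^\alpha$ (centred at $0$): the diagonal entries produce the constants $\fz(\zeta_{j+1})=\bc_{j+1}$ and the $\infty$-value at $0$, combining into $\tilde\bC=\Diag(\bc_1,\dots,\bc_g,0)$ once the pole of $\fz$ at $0$ is absorbed; the within-block and symmetric off-diagonal entries assemble into the Hankel-type expression $(\vq\vp^*)^-+(\vp\vq^*)^+$; and, crucially, the coupling of block $m$ to block $m\pm1$ is \emph{rank one}, $A(\vp)=\delta_g\vp^*$, because only the top function $f_g^\alpha$ carries the factor $\Delta$ that reaches the neighbouring block. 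The normalization $(\fz b)(0)>0$ fixes $p_g^{(j)}>0$.

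For steps (ii)--(iii): invertibility of $\bc_k-A$ is immediate since $\sigma(M_\fz)=E$ while $\bc_k$ lies in a gap, and the GSMP-structure of $S^{-k}(\bc_k-A)^{-1}S^k$ is the conformal invariance \eqref{aut1}: under $w_k=\tfrac1{\bc_k-z}$ one has $\infty\mapsto0$, $E\mapsto E_k$, and $\Delta$ is carried to the Ahlfors function $\Delta_k$ with zeros reordered to $\bC_k$, so $M_{w_k}=(\bc_k-M_\fz)^{-1}$ is, in the reordered and shifted $f$-basis, again a GSMP-structured matrix for $(E_k,\bC_k)$; hence $A\in\GSMP(\bC)$. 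The magic formula is then a one-line computation: using $V=\tfrac1\Delta+\Delta$ and $|\Delta|=1$ on $E$, $V(M_\fz)=M_{\Delta\circ\fz}+M_{\overline{\Delta\circ\fz}}=S^{g+1}+(S^{g+1})^*=S^{g+1}+S^{-(g+1)}$, which is \eqref{magic}. With \eqref{magic} verified and $A\in\GSMP(\bC)$, Proposition \ref{prop18} gives $A(\alpha;\bC)\in A(E,\bC)$; single-valuedness of $\Delta$ is what makes $A(\alpha;\bC)$ periodic.

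Finally, injectivity up to the stated sign identification: $A(\alpha;\bC)$ and $J(\alpha)$ are matrices of the \emph{same} operator $M_\fz$ on $L^2(\alpha)$ in the two bases $\{f_n^\alpha\}$ and $\{e_n^\alpha\}$, so recovering the intrinsic resolvent (Weyl) data — equivalently applying $\cF$ — returns $J(\alpha)$, and $\alpha\mapsto J(\alpha)$ is one-to-one by Theorem \ref{thm:multbyz}; hence $\alpha$ is determined by the matrix. The only remaining freedom is the sign of the half-angle phase factors (such as $e^{-\pi i\alpha(\oc\gamma_1)}$ in $f_0^\alpha$) normalizing $f_0^\alpha,\dots,f_{g-1}^\alpha$ in \eqref{smpbase}, which are fixed only up to sign by $\alpha\in\Gamma^*\simeq\bbR^g/\bbZ^g$; each flip sends $(\vp_j,\vq_j)\mapsto(-\vp_j,-\vq_j)$ while $f_g^\alpha$ carries no such factor and is pinned by positivity, which is exactly the stated identification. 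I expect the main obstacle to be step (i): verifying the precise block form \eqref{n2}, in particular the rank-one inter-block coupling and the clean Hankel splitting $(\vq\vp^*)^-+(\vp\vq^*)^+$, which requires careful bookkeeping with the reproducing kernels and the Blaschke factors $b_{\bc_j}$ rather than any single clever identity.
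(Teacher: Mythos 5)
Your outline of the block structure, the magic formula, and the reduction of injectivity to the Jacobi case (via the resolvent data and Theorem \ref{thm:multbyz}) is consistent with what the paper does; indeed the paper dismisses the sparsity pattern in one sentence (``the structure of the matrix is fixed by the choice of the orthonormal basis'') and recovers $\alpha$ from $r_+(z)$ exactly as you suggest. But you have misidentified where the actual content of the proof lies, and the step you skip is a genuine gap: you never verify that the entries of the matrix of $M_{\fz}$ in the basis $\{f_n^{\alpha}\}$ are \emph{real}. A self-adjoint operator in an orthonormal basis of complex-valued functions gives a priori only a Hermitian matrix, whereas $\GSMP(\bC)$ is by definition a class of \emph{real symmetric} matrices generated by real vectors $\vp_j,\vq_j$ (and everything downstream --- the orthogonal rotations of the Jacobi flow, the real Blaschke--Potapov factors, the $\l^2_+$ conditions of Theorem \ref{th73} --- depends on this). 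This is precisely what the phase factors $e^{-\pi i(\alpha-\mu_{\bc_1}-\dots-\mu_{\bc_{j-1}})(\oc\gamma_j)}$ in \eqref{smpbase} are there for, and checking that they do the job is the whole of the paper's written proof: one computes
$p_j(\alpha)=\langle \fz f_j^\alpha, f_{-1}^\alpha\rangle$ explicitly in terms of $k^{\beta_j}(\zeta_j)$ with $\beta_j=\alpha-\sum_{k<j}\mu_{\bc_k}$, and uses the reflection symmetry $\overline{k^\beta(\bar\zeta)}=k^\beta(\zeta)$ together with the choice $\oc\gamma_j(\zeta_j)=\bar\zeta_j$ to get $k^{\beta_j}(\zeta_j)=e^{-2\pi i\beta_j(\oc\gamma_j)}\overline{k^{\beta_j}(\zeta_j)}$, whence $e^{-\pi i\beta_j(\oc\gamma_j)}k^{\beta_j}(\zeta_j)\in\bbR$; reality of the $q_j$ follows similarly from $p_g(\alpha)q_j(\alpha)$. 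You mention the half-angle phases only at the very end, as the source of the sign ambiguity, but without the reality computation that remark is unsupported --- and, conversely, the identification $(p_j,q_j)\mapsto(-p_j,-q_j)$ is not an afterthought but falls out of exactly this argument, since the square root of $e^{-2\pi i\beta_j(\oc\gamma_j)}$ is defined only up to $\pm1$.

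A secondary, smaller issue: invoking Proposition \ref{prop18} to conclude $A(\alpha;\bC)\in A(E,\bC)$ is circular as the paper is organized, since the proof of that proposition begins by citing the parametrization of $A(E,\bC)$ by characters, i.e.\ the theorem you are proving. You do not need it: membership in $A(E,\bC)$ follows directly from periodicity (because $\Delta f_n^\alpha=f_{n+g+1}^\alpha$, so $S^{-(g+1)}AS^{g+1}=A$) together with $\sigma(M_{\fz})=E$ on $L^2(\alpha)$, once the GSMP structure and reality are in place.
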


\begin{proof}
The structure of the matrix is fixed by the choice of the orthonormal basis. We only need to check that, under the normalization \eqref{smpbase}, $p_j(\alpha)$
 and $q_j(\alpha)$ are real. For $\beta_j=\alpha-\sum_{k=1}^{j}\mu_{c_k}$, we have
 $$
 p_j(\alpha)=\langle \fz f_j^\alpha, f_{-1}^\alpha \rangle=(b\fz)(0)\prod_{k=1}^{j-1}b_{\bc_k}(0)
 e^{-\pi\beta_j(\oc\gamma_j)}\frac{k^{\beta_j}(0,\zeta_{j})}{\sqrt{k^{\beta_j}_{\zeta_{j}}(\zeta_{j})k^{\alpha+\mu}(0)}}.
 $$
 Since $\overline{k^\beta(\bar \zeta)}=k^\beta(\zeta)$  for all $\beta\in\Gamma^*$, we get
 $$
k^\beta(\zeta_{j})= \overline{k^\beta(\bar\z_{j})}= \overline{k^\beta(\oc\gamma_j(\zeta_{j}))}= 
 e^{-2\pi i\beta(\oc\gamma_j)}\overline{k^\beta(\zeta_{j})}.
 $$
 Therefore, $e^{-\pi i\beta(\oc\gamma_j)}\overline{k^\beta(\zeta_{j})}=
 e^{-\pi i\beta(\oc\gamma_j)}k^\beta(0,\z_{j})$ is real. Note that the square root of $e^{-2\pi i\beta(\oc\gamma_j)}$ is defined up to the multiplier $\pm 1$.
 Similarly, we prove that $q_j(\alpha)$ are real based on
 $$
 p_g(\alpha)q_j(\alpha)=\langle f^\alpha_g,f^\alpha_j \rangle.
 $$
 
 If a periodic $\oc A\in A(E,\bC)$ is given, we introduce its resolvent function $r_+(z)$, see Theorem \ref{th213} below, and define $\alpha$ exactly as in the Jacobi matrix case, see e.g. \cite{SY}.
\end{proof}

\begin{definition}\label{def:312}
We define the Jacobi flow  on $A(E;\bC)$ as the dynamical system generated by the following map: 
$$
\mathcal J A(\alpha)=A(\alpha-\mu), \quad \alpha\in\Gamma^*.
$$
\end{definition}

We can describe this operation in a very explicit form.
\begin{lemma}
 \label{thm:jacobiflowper}
Let $\cO(\alpha)$ be the unitary, periodic $(g+1)\times (g+1)$-block diagonal  matrix given by
\begin{equation}
\label{eqn:ufunctional}
 \cO(\alpha)\begin{bmatrix}e_{(g+1)m}&\dots& e_{(g+1)m+g}
 \end{bmatrix}=\begin{bmatrix}e_{(g+1)m}&\dots& e_{(g+1)m+g}
 \end{bmatrix} O(\alpha), 
 \end{equation}
 where
  \begin{equation}\label{jfp0}
O(\alpha)=
 \begin{bmatrix}
I_{g-2}&0\\
0&\begin{bmatrix}\frac{p_{g-1}(\alpha)}{\sqrt{p_{g-1}^2(\alpha)+p_g^2(\alpha)}}&\frac{p_g(\alpha)}{\sqrt{p_{g-1}^2(\alpha)+p_g^2(\alpha)}}\\ \frac{p_g(\alpha)}{\sqrt{p_{g-1}^2(\alpha)+p_g^2(\alpha)}}&\frac{-p_{g-1}(\alpha)}{\sqrt{p_{g-1}^2(\alpha)+p_g^2(\alpha)}} \end{bmatrix}
\end{bmatrix}.
 \end{equation}
 Then
\begin{equation}\label{jfp}
 \mathcal O A(\alpha;\bC):=S^{-1}\mathcal O(\alpha)^*A(\alpha;\bC)\mathcal O(\alpha)S=
 A(\alpha+\mu_{\bc_g};\bc_g,\bc_1,\dots,\bc_{g-1}).
\end{equation}
\end{lemma}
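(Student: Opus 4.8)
The plan is to verify \eqref{jfp} at the level of the functional models of Theorems~\ref{thm:onbsmp} and~\ref{thm:multbyzsmp}. Both $A(\alpha;\bC)$ and $A(\alpha+\mu_{\bc_g};\bc_g,\bc_1,\dots,\bc_{g-1})$ represent the multiplication operator by $\fz$, the first in the basis $\{f_n^\alpha\}$ built from the ordering $\bC$, the second in the basis $\{f_n^{\alpha'}\}$ built from the cyclically shifted ordering $\bC'=(\bc_g,\bc_1,\dots,\bc_{g-1})$ and character $\alpha'=\alpha+\mu_{\bc_g}$. These two models live in $L^2(\alpha)$ and $L^2(\alpha')$; since $|b_{\bc_g}|=1$ on $E$, multiplication by $\overline{b_{\bc_g}}$ is a unitary $L^2(\alpha')\to L^2(\alpha)$ commuting with $M_{\fz}$, so it does not affect the matrix of $M_{\fz}$. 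Writing $\Phi_\alpha e_n=f_n^\alpha$ and $\Phi_{\alpha'}'e_n=f_n^{\alpha'}$, the identity \eqref{jfp} therefore reduces to the purely basis-theoretic statement
$$\Phi_\alpha\,\cO(\alpha)\,S=M_{\overline{b_{\bc_g}}}\,\Phi_{\alpha'}',$$
i.e. that $\cO(\alpha)S$ carries the model basis of $A(\alpha';\bC')$ onto that of $A(\alpha;\bC)$ up to the trivial intertwiner $M_{\overline{b_{\bc_g}}}$.

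The shift of the character is forced by the orthogonal decomposition, analogous to \eqref{ort1} and \eqref{smpbase0} for the single Blaschke factor $b_{\bc_g}$,
$$H^2(\alpha')=\{k_{\zeta_g}^{\alpha'}\}\oplus b_{\bc_g}H^2(\alpha),\qquad \alpha'=\alpha+\mu_{\bc_g}.$$
Since $b_{\bc_g}$ has character $\mu_{\bc_g}$ and a single zero orbit over $\bc_g$, multiplication by it is an isometry of $H^2(\alpha)$ onto the orthocomplement of $k_{\zeta_g}^{\alpha'}$; the new leading function $f_0^{\alpha'}$ is precisely the normalized $k_{\zeta_g}^{\alpha'}$, while $f_n^{\alpha'}\in b_{\bc_g}H^2(\alpha)$ for $n\ge1$. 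Comparing \eqref{smpbase} for the two orderings and using $\alpha'-\mu_{\bc_g}=\alpha$, one checks directly that $f_j^{\alpha'}=b_{\bc_g}f_{j-1}^{\alpha}$ for the interior indices $1\le j\le g-1$, and more generally that the stripped sequence $\{b_{\bc_g}^{-1}f_n^{\alpha'}\}_{n\ge1}$ agrees with $\{f_{n-1}^{\alpha}\}$ away from the top of each $(g+1)$-block. This unit shift of indices is implemented by $S$, and the block-periodicity is inherited from the ordering-independence of $\Delta$, so that $f_{(g+1)m+j}^{\alpha'}=\Delta^m f_j^{\alpha'}$ exactly as in \eqref{gsmpbase}.

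The only genuine discrepancy occurs at the seam between consecutive blocks. Modulo the common factor $\prod_{k=1}^{g-1}b_{\bc_k}$ and the common character $\gamma=\alpha+\mu+\mu_{\bc_g}$, the old functions $f_{g-1}^{\alpha}$ and $f_g^{\alpha}$ are the reproducing kernels $k_{\zeta_g}^\gamma$ and $k_0^\gamma$ (the latter at $0$, i.e. at $\infty$), and they must be recombined to match $b_{\bc_g}^{-1}f_g^{\alpha'}$, the stripped kernel at $0$ for the new ordering. Both $\{f_{g-1}^\alpha,f_g^\alpha\}$ and the corresponding new pair are orthonormal bases of the same two-dimensional span of these kernels, hence related by a planar rotation, and the claim of the lemma is that this rotation is exactly the Givens block $O(\alpha)$ of \eqref{jfp0}, with entries $p_{g-1}(\alpha)$ and $p_g(\alpha)$ normalized by $\sqrt{p_{g-1}^2(\alpha)+p_g^2(\alpha)}$. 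That these are the relevant coefficients is natural: by the definition $p_j(\alpha)=\langle\fz f_j^\alpha,f_{-1}^\alpha\rangle$ from the proof of Theorem~\ref{thm:multbyzsmp}, $p_{g-1}(\alpha)$ and $p_g(\alpha)$ are precisely the components through which $\fz f_{-1}^\alpha$ couples to $f_{g-1}^\alpha$ and $f_g^\alpha$, i.e. the components along the two directions being rotated.

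The hard part is this last identification. Concretely I would compute the Gram matrix of $\{k_{\zeta_g}^{\gamma},k_0^{\gamma}\}$ in terms of reproducing-kernel values, expand the normalized $k_0^\gamma$ in the orthonormal pair $\{f_{g-1}^\alpha,f_g^\alpha\}$, and match the two coefficients with $p_{g-1}(\alpha)/\sqrt{p_{g-1}^2+p_g^2}$ and $p_g(\alpha)/\sqrt{p_{g-1}^2+p_g^2}$ using the explicit formula for $p_j(\alpha)$ in Theorem~\ref{thm:multbyzsmp}. Two routine points remain: the overall sign of the rotation, which is absorbed by the identification $(p_j,q_j)\mapsto(-p_j,-q_j)$ of Theorem~\ref{thm:multbyzsmp}, and the check that the resulting matrix is again GSMP-structured for the ordering $\bC'$ with positive leading diagonal entry, which follows once the basis identification is in place. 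An alternative, more operator-theoretic route would bypass the explicit kernels by showing directly that $\cO A(\alpha;\bC)$ is GSMP-structured for $\bC'$ and obeys the magic formula \eqref{magic}, hence lies in $A(E,\bC')$ by Proposition~\ref{prop18}, and then fixing its character via the resolvent function $r_+$ as in the proof of Theorem~\ref{thm:multbyzsmp}; even so, reproducing the explicit rotation \eqref{jfp0} requires the same seam computation.
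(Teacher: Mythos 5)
Your proposal is correct and follows essentially the same route as the paper's own (three-sentence) proof: the operation swaps the order of the two reproducing kernels attached to $\bc_g$ and $\infty$, which is a rotation in their two-dimensional span, and after the shift one recovers the GSMP basis \eqref{smpbase} for the ordering $(\bc_g,\bc_1,\dots,\bc_{g-1})$ and character $\alpha+\mu_{\bc_g}$. Your write-up is in fact more detailed than the paper's, which likewise does not carry out the explicit matching of the rotation angle with $(p_{g-1},p_g)/\sqrt{p_{g-1}^2+p_g^2}$ that you defer to a Gram-matrix computation.
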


\begin{proof}
Actually, in this operation we just switched  the order of two reproducing kernels related to $\bc_g$ and $\infty$. This is a rotation in the two dimensional space. Then, up to the shift, we derived a GSMP basis of the form \eqref{smpbase}, but with the new ordering $(\bc_g,\bc_1,\dots,\bc_{g-1})$ and the new character $\alpha+\mu_{c_g}$.
\end{proof}
\begin{theorem}\label{th211}
In the above notations
\begin{equation}\label{perjac}
\cJ A(\alpha;\bC)=\cO^{\circ g} A(\alpha;\bC)
\end{equation}
\end{theorem}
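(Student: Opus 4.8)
The plan is to compose $g$ applications of the elementary single-step flow described in Lemma~\ref{thm:jacobiflowper} and track how the character and the ordering $\bC$ evolve under each step. Recall that the Jacobi flow $\cJ$ on $A(E,\bC)$ is defined abstractly in Definition~\ref{def:312} by $\cJ A(\alpha;\bC)=A(\alpha-\mu;\bC)$, whereas $\cO$ is a concrete unitary conjugation (a single Givens rotation followed by a shift) whose effect, according to \eqref{jfp}, is to cyclically permute the ordering and shift the character by $\mu_{\bc_g}$. Since these two operations live on the same family $A(E,\cdot)$ but act differently on the data $(\alpha,\bC)$, the identity $\cJ=\cO^{\circ g}$ is really a bookkeeping statement: $g$ single rotations must reconstitute the original ordering while shifting the character by exactly $-\mu$.

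First I would iterate \eqref{jfp} $g$ times. After one step the ordering becomes $(\bc_g,\bc_1,\dots,\bc_{g-1})$ and the character becomes $\alpha+\mu_{\bc_g}$. The key observation is that each application of $\cO$ cyclically rotates the ordering by one position and, at the same time, adds the character increment associated with the point that was just moved to the front. Carefully, at the $\ell$-th step the point being rotated to the last-but-one slot is $\bc_{g-\ell+1}$ (indices read cyclically), so the accumulated character shift after $g$ steps is $\sum_{j=1}^g\mu_{\bc_j}$, and after exactly $g$ rotations the ordering returns to its original form $(\bc_1,\dots,\bc_g)=\bC$. Thus $\cO^{\circ g}A(\alpha;\bC)=A(\alpha+\sum_{j=1}^g\mu_{\bc_j};\bC)$.

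To finish I would invoke the relation $\mu+\sum_{j=1}^g\mu_{\bc_j}=\mathbf{0}_{\Gamma^*}$, which was established just after \eqref{smpbase0} from the factorization $\Delta(\fz(\zeta))=b(\zeta)\prod_{j=1}^g b_{\bc_j}(\zeta)$ of the single-valued inner function $\Delta$. This immediately gives $\sum_{j=1}^g\mu_{\bc_j}=-\mu$, so the accumulated shift is exactly $-\mu$ and $\cO^{\circ g}A(\alpha;\bC)=A(\alpha-\mu;\bC)=\cJ A(\alpha;\bC)$, which is the claim. The single-valuedness of $\Delta$ (trivial total character) is precisely what forces the $g$ rotations to close up both the ordering and the character correctly; this is the conceptual heart of the argument rather than a computation.

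The main obstacle I anticipate is the careful indexing of the intermediate characters and orderings through the cyclic permutation: each step \eqref{jfp} shifts the character by $\mu_{\bc_g}$ \emph{relative to the current ordering}, so one must recompute which $\mu_{\bc_j}$ is picked up at each stage as the front element changes. Verifying that these increments sum to $\sum_{j=1}^g\mu_{\bc_j}$ (and not some permuted or partial sum) requires keeping the rotation and the character update strictly synchronized; a clean way to do this is to label the state after $\ell$ steps as $A(\alpha+\sum_{i=0}^{\ell-1}\mu_{\bc_{g-i}};\,\bc_{g-\ell+1},\dots,\bc_{g},\bc_1,\dots,\bc_{g-\ell})$ and check the induction step against \eqref{jfp} directly, then close the loop using $\sum_{j=1}^g\mu_{\bc_j}=-\mu$.
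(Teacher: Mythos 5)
Your proposal is correct and follows exactly the paper's argument: iterate \eqref{jfp} $g$ times, observe that the cyclic rotation of the ordering has period $g$ so $\bC$ is restored, and that the accumulated character shift is $\sum_{j=1}^g\mu_{\bc_j}=-\mu$ by the single-valuedness of $\Delta$. The paper's proof is a one-line version of the same bookkeeping; your explicit induction on the intermediate states is just a more careful write-up of it.
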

\begin{proof}
We use \eqref{jfp}, having in mind that $\sum_{j=1}^g\mu_{\bc_j}=-\mu$  and that after all permutations we obtain the original ordering $\bC$.
\end{proof}

The  next lemma allows us to estimate components of the vector $f^{\alpha}_j$, $j=0,..,g,$ by means of the basis $\{e^\alpha_n\}_{n\ge 0}$.

\begin{lemma}
Let
\begin{equation}\label{defF}
f_j^\alpha=\sum_{k=0}^\infty F^j_k(\alpha) e_k^\alpha
\end{equation}
Then
\begin{equation}\label{estF}
|F^j_k(\alpha)|\le C(E) \eta^k, \quad j=0,\dots,g,
\end{equation}
where $1>\eta>\max\{|b(\bc_1)|,\dots,|b(\bc_g)|\}$.
\end{lemma}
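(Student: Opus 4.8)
The plan is to read the coefficients off as inner products and to extract the geometric decay from the inner factor $b^k$ carried by $e_k^\alpha$. Since $\{e_k^\alpha\}_{k\ge 0}$ is an orthonormal basis of $H^2(\alpha)$ by Theorem \ref{thm:onb}, we have $F_k^j(\alpha)=\langle f_j^\alpha,e_k^\alpha\rangle$. By \eqref{defeal} we factor $e_k^\alpha=b^k g_k$ with $g_k=k^{\alpha-k\mu}/\sqrt{k^{\alpha-k\mu}(0)}\in H^2(\alpha-k\mu)$, and by \eqref{smpbase}, up to a unimodular constant and a positive normalization, $f_j^\alpha=B_j\,k^{\beta_j}_{\zeta_{j+1}}$, where $B_j=\prod_{l=1}^{j}b_{\bc_l}$, $\beta_j=\alpha-\sum_{l=1}^{j}\mu_{\bc_l}$, and with the convention $\zeta_{g+1}=0$, $\beta_g=\alpha+\mu$ for $j=g$.

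First I would settle the model case $j=0$, where $B_0\equiv 1$ and $f_0^\alpha$ is a bare reproducing kernel. The reproducing property gives $|F_k^0(\alpha)|=|e_k^\alpha(\zeta_1)|/\sqrt{k^\alpha_{\zeta_1}(\zeta_1)}$, and \eqref{defeal} together with the Cauchy--Schwarz bound $|k^{\alpha-k\mu}(\zeta_1)|\le\sqrt{k^{\alpha-k\mu}(0)}\sqrt{k^{\alpha-k\mu}_{\zeta_1}(\zeta_1)}$ yields
\begin{equation*}
|F_k^0(\alpha)|\le |b(\zeta_1)|^k\sqrt{\frac{k^{\alpha-k\mu}_{\zeta_1}(\zeta_1)}{k^\alpha_{\zeta_1}(\zeta_1)}}.
\end{equation*}
Here $|b(\zeta_1)|=|b(\bc_1)|$ because $\log|b|^{-1}=G(\fz(\cdot),\infty)$ depends on $\zeta$ only through $\fz(\zeta)$, and the ratio of reproducing kernels is a continuous positive function of the characters, hence bounded above uniformly over the compact torus $\Gamma^*\simeq\bbR^g/\bbZ^g$. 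This gives $|F_k^0(\alpha)|\le C(E)|b(\bc_1)|^k\le C(E)\eta^k$.

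For $j\ge 1$ I would move the unimodular factor $B_j$ onto $e_k^\alpha$ (using $|B_j|=1$ on $\partial\bbD$), obtaining $F_k^j(\alpha)=(\text{const})\,\overline{(P_{H^2(\beta_j)}[\overline{B_j}\,h])(\zeta_{j+1})}$ with $h:=b^k g_k\in H^2(\alpha)$. Since $B_j$ is inner, decomposing $h=B_j h_1+h_0$ with $h_1\in H^2(\beta_j)$ and $h_0\in H^2(\alpha)\ominus B_jH^2(\beta_j)$ gives $P_{H^2(\beta_j)}[\overline{B_j}h]=h_1$, so it remains to bound $h_1(\zeta_{j+1})=(h(\zeta_{j+1})-h_0(\zeta_{j+1}))/B_j(\zeta_{j+1})$; note $B_j(\zeta_{j+1})\ne 0$ since the $\zeta_l$ are distinct. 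The quotient term carries $h(\zeta_{j+1})=b(\zeta_{j+1})^k g_k(\zeta_{j+1})$, of size $O(|b(\bc_{j+1})|^k)$, while the finite-dimensional correction $h_0$ lives in the fixed model space of $B_j$ and is determined by the values $h(\zeta_l)=b(\zeta_l)^k g_k(\zeta_l)$, $l=1,\dots,j$, each $O(|b(\bc_l)|^k)$. Hence every contribution decays at rate $\max\{|b(\bc_1)|,\dots,|b(\bc_{j+1})|\}\le\max_{1\le l\le g}|b(\bc_l)|<\eta$, and the same compactness argument makes the constants uniform in $\alpha$, giving \eqref{estF}. The case $j=g$ is identical with $\zeta_{g+1}=0$: here $h(0)=0$ because $b(0)=0$, so the bound is carried entirely by the correction $h_0$, again of size $O(\max_l|b(\bc_l)|^k)$.

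The main obstacle is exactly this last reduction for $j\ge 1$: the $f_j^\alpha$ are not bare kernels but carry the inner factor $B_j$, and one must verify that projecting past $B_j$ neither destroys the $b^k$ decay nor introduces an $\alpha$-dependence beyond uniform control. The cleanest way to make the correction $h_0$ quantitative is to expand it in the basis $\{k^{\beta_j}_{\zeta_l}\}_{l=1}^{j}$ of the model space of $B_j$ and to solve the resulting fixed-size interpolation system; all its entries and its inverse depend continuously on $\alpha\in\Gamma^*$, so the bound $\max_l|b(\bc_l)|^k$ holds with an $\alpha$-independent constant $C(E)$.
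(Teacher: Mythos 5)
Your argument is correct and is essentially the paper's own proof in different clothing: the paper evaluates $\langle e^\alpha_k,\ \prod_{n}b_{\bc_n}\,k^{\beta_j}_{\zeta_j}\rangle$ by subtracting from $e^\alpha_k/\prod_n b_{\bc_n}$ explicit kernel terms that cancel the poles at the $\zeta_n$ without changing the inner product, which is exactly your decomposition $h=B_jh_1+h_0$ with $h_0$ the projection onto the finite-dimensional model space, and both proofs then reduce everything to the point values $e^\alpha_k(\zeta_l)=O(\eta^k)$ together with compactness of $\Gamma^*$ for uniformity of the constants. (One cosmetic quibble: the model space $H^2(\alpha)\ominus B_jH^2(\beta_j)$ is spanned by the kernels $k^{\alpha}_{\zeta_l}$, $l=1,\dots,j$, not by $k^{\beta_j}_{\zeta_l}$ --- the character must be $\alpha$ for the span to lie in $H^2(\alpha)$ --- but this does not affect your interpolation argument.)
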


\begin{proof}
First of all, we note that
$$
\underline C(E) \le \|k^\alpha_{\zeta_n}\|\le \overline C(E).
$$
uniformly on $\alpha\in\Gamma^*$.
Also $|b_{\bc_n}(\bc_j)|\ge \underline c(E)$ and
by definition \eqref{defeal}, $|e^\alpha_k(\bc_n)|\le \overline c(E)\eta^k$.

Since
\begin{eqnarray*}
& & e^{-\pi i\beta_j(\oc\gamma_j)}{\|k_{\zeta_j}^{\beta_j}\|}\overline{F_k^j(\alpha)}=\langle e^\alpha_k,
 \prod_{n=1}^{j-1}b_{\bc_n}
{k_{\zeta_j}^{\beta_j}}\rangle
\\
&=&\langle 
 \frac{e^\alpha_k}{ \prod_{n=1}^{j-1}b_{\bc_n}}-\sum_{n=1}^{j-1}
\frac{k^{\beta_j+\mu_{\bc_n}}_{\zeta_n}e^\alpha_k(\bc_n)}{b_{\bc_n}k^{\beta_j+\mu_{\bc_n}}_{\zeta_n}(\bc_n)
 \prod_{l=1,l\not=n}^{j-1}b_{\bc_l}(\bc_n)}
,{k_{\zeta_j}^{\beta_j}}\rangle
\\
&=& \frac{e^\alpha_k(\bc_j)}{ \prod_{n=1}^{j-1}b_{\bc_n}(\bc_j)}-\sum_{n=1}^{j-1}
\frac{k^{\beta_j+\mu_{\bc_n}(\bc_j)}_{\zeta_n}e^\alpha_k(\bc_n)}{b_{\bc_n}(\bc_j)k^{\beta_j+\mu_{\bc_n}}_{\zeta_n}(\bc_n)
 \prod_{l=1,l\not=n}^{j-1}b_{\bc_l}(\bc_n)},
\end{eqnarray*}
we get \eqref{estF}.
\end{proof}

\subsection{Transfer matrix}

In this subsection we discuss briefly the direct spectral problem of the class $A(E,\bC)$. We use the following notations
$$
s_k\vp=\begin{bmatrix} p_0\\ \vdots \\ p_{g-k} \end{bmatrix},\quad 
s_k\vq=\begin{bmatrix} q_0\\ \vdots \\ q_{g-k} \end{bmatrix}.
$$
Let $\{\delta_j\}_{j=0}^g$ be the standard basis in $\bbC^{g+1}$ and let $M_j$'s be upper triangular matrices such that
\begin{equation}\label{mdef0}
B(\vp,\vq)-\vp(\vq)^*=M(\vp,\vq):=M_0=\begin{bmatrix}M_1& 0\\
0 & 0\end{bmatrix}+(-\vp\, q_g+\vq\, p_g)\delta_g^*
\end{equation}
and
\begin{equation}\label{mdefj}
M_j=\begin{bmatrix}M_{j+1}& 0\\
0 & \bc_{g+1-j}\end{bmatrix}+(-s_j\vp\, q_{g-j}+{s_j\vq}\, p_{g-j})\delta_{g-j}^*.
\end{equation}
 for $j\ge 1$.

 \begin{theorem}\label{th213}
 Let
 \begin{equation}\label{forfa1}
\begin{bmatrix}
 R_{00}(z)& R_{0g}(z)\\
  R_{g0}(z)& R_{gg}(z)
\end{bmatrix}=
\begin{bmatrix}
\langle(B_0-z)^{-1}\vp,\vp\rangle& \langle(B_0-z)^{-1}\delta_g,\vp\rangle \\
 \langle(B_0-z)^{-1}\vp, \delta_g\rangle& \langle(B_0-z)^{-1}\delta_g,\delta_g\rangle
\end{bmatrix}
\end{equation}
and
$$
r_+(z)=\|\vp\|^2\langle(A_+-z)^{-1}\tilde e_0, \tilde e_0\rangle.
$$
Then the shift by one block for  one-sided GSMP matrix $A_+\mapsto A_+^{(1)}$, see \eqref{gsmpshift}, by means of the spectral function has the following form
\begin{equation}\label{forfa3}
r_+(z)=\frac{\fA_{11}(z) r^{(1)}_+(z)+\fA_{12}(z)}{\fA_{21}(z)r^{(1)}_+(z)+\fA_{22}(z)},
\end{equation}
where
\begin{equation}\label{forfa2}
\fA(z):=\begin{bmatrix}
\fA_{11}&\fA_{12}\\
\fA_{21}&\fA_{22}
\end{bmatrix}(z)=
\frac{1}{R_{0g}(z)}\begin{bmatrix}
R_{00}R_{gg}-R_{0g}^2& -R_{00}\\
R_{gg}&-1
 \end{bmatrix}(z).
\end{equation}
 \end{theorem}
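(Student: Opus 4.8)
The plan is to obtain \eqref{forfa3} by a coefficient-stripping (Schur-complement) computation that exploits the rank-one form $A(\vp)=\delta_g\vp^*$ of the block off-diagonal. First I would split the positive half-axis into the first block $\{e_0,\dots,e_g\}$ and its orthogonal complement, and write
\[
A_+=\begin{bmatrix} B_0 & C\\ C^* & A_+^{(1)}\end{bmatrix},
\]
where $B_0=B(\vp_0,\vq_0)$ is the first diagonal block and $A_+^{(1)}$ is the once-stripped GSMP matrix of \eqref{gsmpshift}, i.e. the one obtained by deleting the first block and relabelling $\vp_{j+1},\vq_{j+1}\mapsto\vp_j,\vq_j$. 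Because only the block $\{e_{g+1},\dots,e_{2g+1}\}$ is coupled to the first block, and that coupling is $A(\vp_1)=\delta_g\vp_1^*$, the whole off-diagonal reduces to $C=\delta_g\hat\vp_1^*$, where $\hat\vp_1$ denotes $\vp_1$ embedded in the first block of the complement. Along the way I would record, from the computation $P_+Ae_{-1}=\vp_0$ behind \eqref{ijf3}, that $\tilde e_0=\vp_0/\|\vp_0\|$ lies in the first block, so that \eqref{ijf4} reads $r_+(z)=\langle(A_+-z)^{-1}\vp_0,\vp_0\rangle$ and, identically, $r_+^{(1)}(z)=\langle(A_+^{(1)}-z)^{-1}\vp_1,\vp_1\rangle$.

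Second, the Feshbach/Schur-complement formula compresses the resolvent to the first block,
\[
P_0(A_+-z)^{-1}P_0=\bigl(B_0-z-C(A_+^{(1)}-z)^{-1}C^*\bigr)^{-1},
\]
and the rank-one factorization $C=\delta_g\hat\vp_1^*$ collapses the middle term to a scalar multiple of $\delta_g\delta_g^*$,
\[
C(A_+^{(1)}-z)^{-1}C^*=\langle(A_+^{(1)}-z)^{-1}\hat\vp_1,\hat\vp_1\rangle\,\delta_g\delta_g^*=r_+^{(1)}(z)\,\delta_g\delta_g^*.
\]
Thus computing $r_+$ amounts to inverting the rank-one perturbation $B_0-z-r_+^{(1)}\delta_g\delta_g^*$ of $B_0-z$ and pairing the result against $\vp_0$.

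Third, I would apply the Sherman--Morrison identity to $G:=(B_0-z)^{-1}$ and test against $\vp_0$, obtaining
\[
r_+(z)=\langle G\vp_0,\vp_0\rangle+\frac{r_+^{(1)}\,\langle G\delta_g,\vp_0\rangle\langle G\vp_0,\delta_g\rangle}{1-r_+^{(1)}\langle G\delta_g,\delta_g\rangle}=R_{00}+\frac{r_+^{(1)}R_{0g}R_{g0}}{1-r_+^{(1)}R_{gg}}.
\]
Since $B(\vp_0,\vq_0)$ is real symmetric (the triangular pieces in \eqref{n2} are transposes of one another), $G^T=G$, and the real vectors $\vp_0,\delta_g$ force $R_{0g}=R_{g0}$; hence the numerator factor is $R_{0g}^2$. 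Clearing the denominator gives $r_+=\bigl[(R_{00}R_{gg}-R_{0g}^2)\,r_+^{(1)}-R_{00}\bigr]\big/\bigl[R_{gg}\,r_+^{(1)}-1\bigr]$, which is exactly the M\"obius action of $\fA(z)$ from \eqref{forfa2}; the common factor $1/R_{0g}$ is immaterial for the action and is the normalization making $\det\fA=1$. The entries $R_{00},R_{0g},R_{gg}$ are by definition resolvent matrix elements of the finite matrix $B_0$, whose recursive structure is encoded by the triangular matrices $M_j$ of \eqref{mdef0}--\eqref{mdefj}.

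The only genuinely delicate point is the localization in the first step: one must check carefully that the full block off-diagonal of $A_+$ contributes the single rank-one term $\delta_g\hat\vp_1^*$ (so that the Schur correction acts only in the last coordinate of the first block through $\delta_g\delta_g^*$), and that the scalar it produces is precisely the stripped resolvent function $r_+^{(1)}$ attached to the correctly shifted data $\{\vp_{j+1},\vq_{j+1}\}$. Once this bookkeeping is in place, the remaining Schur-complement and Sherman--Morrison algebra is routine, and the requisite analyticity (invertibility of $B_0-z$ and of $A_+-z$, and nonvanishing of $R_{0g}$ and of $1-r_+^{(1)}R_{gg}$) holds for $\Im z\neq 0$ and extends by analytic continuation.
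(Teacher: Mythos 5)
Your proposal is correct and follows essentially the same route as the paper: the paper's one-line proof represents $A_+$ via exactly the decomposition \eqref{gsmpshift} (block diagonal plus the rank-two coupling $\|\vp^{(1)}\|(e_g\langle\cdot,\tilde e_0^{(1)}\rangle+\tilde e_0^{(1)}\langle\cdot,e_g\rangle)$, which is your $C+C^*$ with $C=\delta_g\hat\vp_1^*$) and then invokes the resolvent perturbation formula, which is precisely the Schur-complement/Sherman--Morrison algebra you carry out explicitly. Your identifications $\tilde e_0=\vp_0/\|\vp_0\|$, the reduction of the coupling to $r_+^{(1)}\delta_g\delta_g^*$, and the final M\"obius form all check out against \eqref{forfa2}--\eqref{forfa3}.
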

\begin{proof}
We represent $A_+$ as a two dimensional perturbation of the block diagonal matrix
\begin{equation}\label{gsmpshift}
A_+=\begin{bmatrix}
B(\vp,\vq)&0\\ 0 &A_+^{(1)}
\end{bmatrix}+\|\vp\,^{(1)}\|(e_g\langle \cdot, \tilde e^{(1)}_0 \rangle+\tilde e^{(1)}_0\langle \cdot, e_g \rangle)
\end{equation}
and apply the resolvent perturbation formula.
\end{proof}
Note that in the definition \eqref{forfa2} we use the normalization $\det\fA(z)=1$.
\begin{theorem}
Let
\begin{equation}\label{thematrix}
\fa(z;p,q)=\fa(z,\infty;p,q)=
\begin{bmatrix}
0&-{p}\\
\frac 1{p}&\frac{z-pq}{p}
\end{bmatrix},
\end{equation}
and
\begin{equation}\label{facto2}
\fa(z,\bc;p,q)=I-\frac{1}{\bc-z}\begin{bmatrix}
p\\ q
\end{bmatrix}\begin{bmatrix}
p&q
\end{bmatrix}\fj,\quad
\fj=
\begin{bmatrix}
0&-1\\
1&0
\end{bmatrix}.
\end{equation}
be the so-called Blaschke-Potapov factors \cite{POT}. Then
\begin{equation}\label{facto3}
\fA(z)=\fa(z,\bc_1;p_0,q_0)\fa(z,\bc_2;p_1,q_1)\dots\fa(z,\bc_g;p_{g-1},q_{g-1})\fa(z;p_g,q_g).
\end{equation}
\end{theorem}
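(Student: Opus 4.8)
The plan is to prove \eqref{facto3} by a coefficient–stripping (Schur complement) argument built directly on the recursion \eqref{mdef0}--\eqref{mdefj}: removing one coordinate of the central block $B(\vp,\vq)$ should produce exactly one Blaschke--Potapov factor. First I would record that \eqref{mdef0} says precisely that $M_0=B(\vp,\vq)-\vp\vq^{\,*}$ is upper triangular with diagonal $(\bc_1,\dots,\bc_g,0)$ and strictly upper entries $q_ip_j-p_iq_j$ $(i<j)$, which is immediate from \eqref{n2}; in particular $B(\vp,\vq)$ is symmetric. Setting $\hat B_j:=M_j+(s_j\vp)(s_j\vq)^{*}$, so that $\hat B_0=B(\vp,\vq)$, the recursion \eqref{mdefj} together with the cancellation $m+q_{g-j}\,s_{j+1}\vp=p_{g-j}\,s_{j+1}\vq$ (where $m$ is the last column of $M_j$) yields the bordered form
\[
\hat B_j=\begin{bmatrix}\hat B_{j+1}& p_{g-j}\,s_{j+1}\vq\\ p_{g-j}\,(s_{j+1}\vq)^{*}& \bc_{g+1-j}+p_{g-j}q_{g-j}\end{bmatrix},\qquad \hat B_g=\bigl[\bc_1+p_0q_0\bigr],
\]
with the convention $\bc_{g+1}=0$ for $j=0$.

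The subtle conceptual point — and what I expect to be the main obstacle — is that the transfer matrix $\fA$ of \eqref{forfa2}, built from the pair $(\vp,\delta_g)$, does \emph{not} recurse into a copy of itself: the coupling in the bordering above goes through $s_{j+1}\vq$, not through $\delta_{g-j}$. The correct object to propagate is instead the symmetric resolvent triple
\[
\pi_j=\langle(\hat B_j-z)^{-1}s_j\vp,\,s_j\vp\rangle,\quad \kappa_j=\langle(\hat B_j-z)^{-1}s_j\vp,\,s_j\vq\rangle,\quad \theta_j=\langle(\hat B_j-z)^{-1}s_j\vq,\,s_j\vq\rangle,
\]
and its associated matrix
\[
\mathcal T_j=\frac{1}{1-\kappa_j}\begin{bmatrix}(1-\kappa_j)^2-\pi_j\theta_j& \pi_j\\ -\theta_j& 1\end{bmatrix},\qquad \det\mathcal T_j=1.
\]

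Next I would apply the block inverse formula to the bordering of $\hat B_j$. Writing $p=p_{g-j}$, $q=q_{g-j}$ and letting $\sigma=(\bc_{g+1-j}+p_{g-j}q_{g-j}-z)-p^{2}\theta_{j+1}$ be the Schur complement of the last coordinate, a routine computation gives
\[
\pi_j=\pi_{j+1}+\frac{p^{2}(1-\kappa_{j+1})^{2}}{\sigma},\quad \kappa_j=\kappa_{j+1}+\frac{p(1-\kappa_{j+1})(q-p\theta_{j+1})}{\sigma},\quad \theta_j=\theta_{j+1}+\frac{(q-p\theta_{j+1})^{2}}{\sigma}.
\]
Substituting these into $\mathcal T_j$ — the two key identities being $1-\kappa_j=(1-\kappa_{j+1})\frac{\bc_{g+1-j}-z}{\sigma}$ and $\sigma-p(q-p\theta_{j+1})=\bc_{g+1-j}-z$ — I would check entrywise (using the explicit forms of $\fa(z,\bc;p,q)$ from \eqref{facto2}) the factorization
\[
\mathcal T_j=\mathcal T_{j+1}\,\fa(z,\bc_{g+1-j};p_{g-j},q_{g-j}),\qquad j=1,\dots,g-1.
\]

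Finally, a one–line $1\times1$ computation on $\hat B_g=[\bc_1+p_0q_0]$ gives the base case $\mathcal T_g=\fa(z,\bc_1;p_0,q_0)$, so the recursion telescopes to $\mathcal T_1=\fa(z,\bc_1;p_0,q_0)\cdots\fa(z,\bc_g;p_{g-1},q_{g-1})$. The very first peel is genuinely different because the last diagonal entry of $\hat B_0$ is $0$ (the point at infinity): substituting the Schur expressions $R_{gg}=1/\sigma_0$, $R_{0g}=p_g(1-\kappa_1)/\sigma_0$, $R_{00}=\pi_1+p_g^{2}(1-\kappa_1)^{2}/\sigma_0$ into \eqref{forfa2} and simplifying yields $\fA=\mathcal T_1\,\fa(z;p_g,q_g)$ with the infinity factor \eqref{thematrix}, which is exactly \eqref{facto3}. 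Thus the hard part is the conceptual step of identifying the right invariant $(\pi_j,\kappa_j,\theta_j)$ to propagate and recognizing that $\fa(z;p_g,q_g)$ enters differently from the factors $\fa(z,\bc_k;\cdot)$; once this is in place everything reduces to the rational algebra above. (Throughout, $z$ is taken off the spectrum so that $1-\kappa_j\neq0$ and $R_{0g}\neq0$; the identity then holds for all $z$ by analytic continuation.)
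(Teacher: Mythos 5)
Your proof is correct, and at bottom it is the same coefficient--stripping argument as the paper's: both peel off one coordinate of the central block per Blaschke--Potapov factor, with the same base case at $\bc_1$ and the same special role for the infinity factor $\fa(z;p_g,q_g)$. The difference is purely in the bookkeeping. The paper does not symmetrize: it propagates the matrices $\fA_{j-1}(z)=I-G_j(z)\fj$, where $G_j$ is the $2\times2$ matrix of resolvent bilinear forms $\langle (M_j-z)^{-1}s_j\vp,s_j\vp\rangle$, etc., taken with respect to the \emph{upper-triangular} $M_j$ of \eqref{mdef0}--\eqref{mdefj}, and each step of the recursion $\fA_{j-1}=\fA_j\,\fa(z,\bc_{g+1-j};p_{g-j},q_{g-j})$ is a rank-one resolvent perturbation identity rather than a bordered Schur complement. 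In fact your $\mathcal{T}_j$ is literally the paper's $\fA_{j-1}$: writing $M_j=\hat B_j-s_j\vp\,(s_j\vq)^*$ and applying the rank-one resolvent formula gives
\begin{equation*}
\langle (M_j-z)^{-1}s_j\vp,s_j\vp\rangle=\frac{\pi_j}{1-\kappa_j},\quad
\langle (M_j-z)^{-1}s_j\vp,s_j\vq\rangle=\frac{\kappa_j}{1-\kappa_j},\quad
\langle (M_j-z)^{-1}s_j\vq,s_j\vq\rangle=\frac{\theta_j}{1-\kappa_j},
\end{equation*}
together with $\langle (M_j-z)^{-1}s_j\vq,s_j\vp\rangle=\kappa_j+\pi_j\theta_j/(1-\kappa_j)$, whence $I-G_j\fj=\mathcal{T}_j$ entry by entry. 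So the ``conceptual obstacle'' you single out --- finding the right invariant $(\pi_j,\kappa_j,\theta_j)$ to propagate --- is sidestepped in the paper by keeping the non-symmetric $M_j$ and the normal form $I-G\fj$, for which the one-step factorization is almost immediate; what your version buys is that every step is an explicit, self-contained Schur-complement computation on symmetric bordered matrices, at the cost of the heavier algebra with $\sigma$ and the normalizing factor $1/(1-\kappa_j)$. All of your intermediate identities (the bordered form of $\hat B_j$, the three update formulas, the base case, and the final peel producing $\fa(z;p_g,q_g)$ from $R_{gg}=1/\sigma_0$, $R_{0g}=p_g(1-\kappa_1)/\sigma_0$, $R_{00}=\pi_1+p_g^2(1-\kappa_1)^2/\sigma_0$) check out.
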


\begin{proof} We use the representation \eqref{mdef0} and definitions \eqref{forfa1}, \eqref{thematrix} to get
$$
\fA(z)=\fA_0(z)\fa(z;p_g,q_g)
$$
where
\begin{equation}\label{facto1}
\fA_0(z)=I-\begin{bmatrix} \langle (M_1-z)^{-1} {s_1\vp},{s_1\vp}\rangle&
\langle (M_1-z)^{-1}{s_1\vq},{s_1\vp}\rangle\\
\langle (M_1-z)^{-1}{s_1\vp},{s_1\vq}\rangle & \langle (M_1-z)^{-1}{s_1\vq},{s_1\vq}\rangle
\end{bmatrix}\fj.
\end{equation}
Then, we use one after another \eqref{mdefj} and definitions \eqref{facto2} to get 
$$
\fA_{j-1}(z)=\fA_{j}(z)\fa(z,\bc_{g+1-j};p_{g-j},q_{g-j}),
$$
where
$$
\fA_{j-1}(z)=I-\begin{bmatrix} \langle (M_j-z)^{-1}{s_j\vp},{s_j\vq}\rangle&
\langle (M_j-z)^{-1}{s_j\vq},{s_j\vp}\rangle\\
\langle (M_j-z)^{-1}{s_j\vp},{s_j\vq}\rangle & \langle (M_j-z)^{-1}{s_j\vq},{s_j\vq}\rangle
\end{bmatrix}\fj.
$$
That is, we obtain \eqref{facto3}.
\end{proof}

%\begin{remark}
%$$
%\fa (z,c;p,q)=e^{-\frac{1}{c-z}\begin{bmatrix}
%p\\ q
%\end{bmatrix}\begin{bmatrix}
%p&q
%\end{bmatrix}\fj}
%$$
%\end{remark}

%\begin{theorem}
%Let $A$ be a periodic GSMP matrix with the transfer matrix $\fA(z)$, given in \eqref{facto3}. Then
%\begin{equation}\label{vofa}
%V(z)=\rho_0 z+c_0+\sum_{j=1}^g\frac{\rho_j}{c_j-z}=\tr\, \fA(z).
%\end{equation}
%Moreover 
%\begin{equation}\label{explpqg}
%p_g\rho_g=1, \quad {\rho_0}\sum_{j=0}^{g}p_j q_j+{c_0}=0,
%\end{equation}
%and
%\begin{eqnarray}\label{explrhk}
%\rho_k&=&
%\begin{bmatrix}
%-q_{k-1}&p_{k-1}
%\end{bmatrix}
%\prod_{j=k}^{g-1}\fa(c_k,c_{j+1};p_j,q_j)\fa(c_k;p_g,q_g)
%\\ &\times&
%\prod_{j=0}^{k-2}\fa(c_k,c_{j+1};p_j,q_j)\begin{bmatrix}
%p_{k-1}\\ q_{k-1}
%\end{bmatrix}.\nonumber
%\end{eqnarray}
%\end{theorem}

\begin{definition} Let 
$A\in A(E,\bC)$.
Then the product \eqref{facto3}
is called the \textit{transfer matrix} associated  with the given $A$.
\end{definition}

The role of the transfer matrix is described in the following theorem.

\begin{theorem}\label{inth7}
Let $A\in A(E,\bC)$  with the transfer matrix $\fA(z)$, given in \eqref{facto3}, and
let $V(z):=\tr\, \fA(z)$. Then the spectrum $E$ of $A$ is given by 
\begin{equation}\label{spectrum}
E=V^{-1}([-2,2])=\{x:\, V(x)\in[-2,2]\}.
\end{equation}
Moreover,
\begin{equation}\label{vofa}
V(z)=\lambda_0 z+\bc_0+\sum_{j=1}^g\frac{\lambda_j}{\bc_j-z},\quad \lambda_j>0,
\end{equation}
where
\begin{equation}\label{explpqg}
\lambda_0 p_g=1, \quad {\lambda_0}\sum_{j=0}^{g}p_j q_j+{\bc_0}=0,
\end{equation}
and
\begin{eqnarray}\label{explrhk}
\lambda_k&=&-\Res_{\bc_k}\tr \fA(z)\nonumber\\
&=&
\begin{bmatrix}
-q_{k-1}&p_{k-1}
\end{bmatrix}
\prod_{j=k}^{g-1}\fa(\bc_k,\bc_{j+1};p_j,q_j)\fa(\bc_k;p_g,q_g)
\nonumber \\ 
&\times&
\prod_{j=0}^{k-2}\fa(\bc_k,\bc_{j+1};p_j,q_j)\begin{bmatrix}
p_{k-1}\\ q_{k-1}
\end{bmatrix}.
\end{eqnarray}
\end{theorem}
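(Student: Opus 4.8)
The plan is to extract the analytic structure of $V(z)=\tr\fA(z)$ directly from the Blaschke--Potapov factorization \eqref{facto3}, and then to identify $V$ with the function $V_E$ produced by the Lemma of our first basic observation; once this identification is in place, both the spectral description \eqref{spectrum} and the positivity $\lambda_j>0$ are inherited for free.

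\textbf{Step 1 (form of $V$ and the coefficient formulas).} In \eqref{facto3} only the last factor $\fa(z;p_g,q_g)$ depends on $z$, while each factor $\fa(z,\bc_k;p_{k-1},q_{k-1})$ is holomorphic away from a simple pole at $z=\bc_k$; since every factor has determinant one, $\det\fA\equiv1$ (the normalization already noted after \eqref{forfa2}). Hence $\tr\fA$ is rational with a pole at infinity and simple poles only at $\bc_1,\dots,\bc_g$, which is the shape \eqref{vofa}. For the behaviour at infinity I would use $\fa(z,\bc_{j+1};p_j,q_j)=I+\tfrac1z\begin{bmatrix}p_j\\ q_j\end{bmatrix}\begin{bmatrix}p_j& q_j\end{bmatrix}\fj+O(z^{-2})$ together with $\fa(z;p_g,q_g)=zD+C$, where $D$ has the single nonzero entry $1/p_g$ in its lower-right corner; multiplying out and taking the trace gives $\tr\fA(z)=\tfrac{1}{p_g}z-q_g-\tfrac1{p_g}\sum_{j=0}^{g-1}p_jq_j+O(z^{-1})$, which yields $\lambda_0=1/p_g$ and the stated constant term, i.e.\ exactly \eqref{explpqg}. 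For the residue at $\bc_k$ I would write $\fA(z)=L(z)\,\fa(z,\bc_k;p_{k-1},q_{k-1})\,R(z)$ with $L,R$ regular at $\bc_k$, use $\Res_{\bc_k}\fa(z,\bc_k;p_{k-1},q_{k-1})=\begin{bmatrix}p_{k-1}\\ q_{k-1}\end{bmatrix}\begin{bmatrix}p_{k-1}& q_{k-1}\end{bmatrix}\fj$, and then collapse the rank-one residue by cyclicity of the trace; this produces the scalar quadratic form \eqref{explrhk}, with the reordered product $R(\bc_k)L(\bc_k)$ appearing precisely as displayed. All coefficients obtained this way are real because the $p_j,q_j,\bc_j$ are.

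\textbf{Step 2 (spectrum and positivity).} Here I would pass to the functional model. Because $\Delta$ is single valued, $A\in A(E,\bC)$ is block periodic and $\fA(z)$ is its monodromy over one $(g+1)$-block period. The two Bloch solutions of $(A-z)u=0$ are the character-automorphic functions governed by the multipliers $\Delta(z)$ and $\Delta(z)^{-1}$ --- this is where $\Delta=b\prod_{j=1}^g b_{\bc_j}$ and the $f$-basis \eqref{gsmpbase} enter --- so the eigenvalues of $\fA(z)$ are $\Delta(z)^{\pm1}$. Combined with $\det\fA=1$ this forces $\tr\fA(z)=\Delta(z)+\Delta(z)^{-1}$, which by \eqref{eq15} is precisely $V_E(z)$. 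That Lemma then simultaneously gives the representation \eqref{vofa} with $\lambda_j>0$ and the identity $E=V^{-1}([-2,2])$, since $|\Delta|=1$ exactly on $E$ and $\Delta+\Delta^{-1}\in[-2,2]$ iff $|\Delta|=1$.

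\textbf{Main obstacle.} The crux is the clean identification of the monodromy eigenvalues with $\Delta^{\pm1}$, i.e.\ of the algebraically computed $\tr\fA$ with the Herglotz function $V_E$; this is what makes the positivity of the residues $\lambda_k$ ($k\ge1$) transparent, since the quadratic form \eqref{explrhk} is a product of $\fj$-unitary factors and is not manifestly positive on its own. A reader who prefers to stay algebraic can obtain \eqref{spectrum} from Floquet theory ($\det\fA=1$ gives $\sigma(A)=\{x:\tr\fA(x)\in[-2,2]\}$, and $A\in A(E,\bC)$ has spectrum $E$), but the positivity still has to be read off from the uniqueness in that Lemma, which pins $V$ down to $V_E$ once one knows it is a real rational function of degree $g+1$ with the correct poles and $V^{-1}([-2,2])=E$.
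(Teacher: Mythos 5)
Your proposal is correct and follows essentially the same route as the paper, whose own proof is only two sentences long: it derives \eqref{explpqg} and \eqref{explrhk} directly from the factorization \eqref{facto3} (your Step 1 is the honest computation behind ``follow immediately'') and obtains \eqref{spectrum}, \eqref{vofa} with $\lambda_j>0$ by the standard Floquet/transfer-matrix argument combined with the identification $\tr\fA=\Delta+\Delta^{-1}=V_E$ coming from the magic formula (cf.\ the proof of Proposition \ref{prop18}), exactly as in your Step 2. One cosmetic slip: the opening claim that ``only the last factor depends on $z$'' should read that only the last factor carries the pole at infinity, since every $\fa(z,\bc_k;p_{k-1},q_{k-1})$ depends on $z$ — your subsequent expansion treats this correctly, so nothing is affected.
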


\begin{proof}
A proof of \eqref{vofa} is the same as for the transfer matrix in the Jacobi matrices case. The relations
\eqref{explpqg} and \eqref{explrhk} follow immediately from \eqref{facto3}.
\end{proof}

\begin{proof}[Proof of Proposition \ref{prop18}]
First of all, we have a parametrization of $A(E,\bC)$ by the characters $\Gamma^*$.
It is evident that, in the basis \eqref{gsmpbase}, multiplication by $\Delta$  is the shift $S^{g+1}$, $\Delta f_n^{\alpha}=f^\alpha_{n+(g+1)}$. Thus, the magic formula for GSMP matrices corresponds to the definition \eqref{eq15}. 
\end{proof}

Later we will use another representation for $q_g$.

\begin{lemma}\label{lem:c0formula}
$q_g$ allows the following alternative representation 
\begin{eqnarray}
q_g+\bc_0
&=&-\sum_{k=1}^{g}\tr \{\prod_{j=0}^{k-2}\fa(\bc_k,\bc_{j+1};p_j,q_j)\begin{bmatrix}
p_{k-1}\\ q_{k-1}
\end{bmatrix}
\begin{bmatrix}
-q_{k-1}& p_{k-1}
\end{bmatrix}\nonumber
\\
&\times& \prod_{j=k}^{g-1}\fa(\bc_k,\bc_{j+1};p_j,q_j)\begin{bmatrix}
0& 0\\ 0&\frac 1 {p_{g}}
\end{bmatrix}\}\label{alternativqg}
\end{eqnarray}
\end{lemma}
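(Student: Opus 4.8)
The plan is to read off $q_g+\bc_0$ as minus the constant term in the Laurent expansion of $V(z)=\tr\fA(z)$ at $z=\infty$, using the factorization \eqref{facto3} and Theorem \ref{inth7}. First I would isolate the single factor of $\fA(z)$ that grows at infinity, namely the last one, and split it into its leading and constant parts,
\[
\fa(z;p_g,q_g)=zD+C,\qquad
D=\begin{bmatrix}0&0\\0&\frac{1}{p_g}\end{bmatrix},\qquad
C=\fa(0;p_g,q_g)=\begin{bmatrix}0&-p_g\\ \frac{1}{p_g}&-q_g\end{bmatrix},
\]
so that $\tr D=1/p_g$ and $\tr C=-q_g$. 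Setting $\cP(z):=\prod_{k=1}^g\fa(z,\bc_k;p_{k-1},q_{k-1})$ we have $\fA(z)=\cP(z)\fa(z;p_g,q_g)$, hence
\[
V(z)=z\,\tr[\cP(z)D]+\tr[\cP(z)C].
\]

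Next I would expand $\cP(z)$ at infinity. From \eqref{facto2} and the identity $\begin{bmatrix}p\\ q\end{bmatrix}\begin{bmatrix}p&q\end{bmatrix}\fj=-\begin{bmatrix}p\\ q\end{bmatrix}\begin{bmatrix}-q&p\end{bmatrix}$, each factor reads
\[
\fa(z,\bc_k;p_{k-1},q_{k-1})=I+\frac{1}{\bc_k-z}\begin{bmatrix}p_{k-1}\\ q_{k-1}\end{bmatrix}\begin{bmatrix}-q_{k-1}&p_{k-1}\end{bmatrix}.
\]
Thus $\cP(z)$ is a rational matrix function whose only singularities are simple poles at the $\bc_k$ (each $\bc_k$ enters exactly one factor, the remaining factors being regular there), with $\cP(\infty)=I$. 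Consequently $\cP$ admits the partial-fraction representation $\cP(z)=I+\sum_{k=1}^g\frac{\cP_k}{\bc_k-z}$, where $\cP_k=\lim_{z\to\bc_k}(\bc_k-z)\cP(z)$ equals
\[
\cP_k=\prod_{j=0}^{k-2}\fa(\bc_k,\bc_{j+1};p_j,q_j)\begin{bmatrix}p_{k-1}\\ q_{k-1}\end{bmatrix}\begin{bmatrix}-q_{k-1}&p_{k-1}\end{bmatrix}\prod_{j=k}^{g-1}\fa(\bc_k,\bc_{j+1};p_j,q_j).
\]
Expanding $\frac{1}{\bc_k-z}=-\frac{1}{z}+O(z^{-2})$ then gives $\cP(z)=I-\frac{1}{z}\sum_{k=1}^g\cP_k+O(z^{-2})$.

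Finally I would collect the constant term. Substituting this expansion, $z\,\tr[\cP(z)D]=\frac{z}{p_g}-\sum_{k=1}^g\tr(\cP_kD)+O(z^{-1})$, while $\tr[\cP(z)C]\to\tr C=-q_g$. The $z^1$ coefficient reproduces $\lambda_0=1/p_g$ as in \eqref{explpqg}, and the $z^0$ coefficient is $\bc_0=-\sum_{k=1}^g\tr(\cP_kD)-q_g$. Rearranging yields $q_g+\bc_0=-\sum_{k=1}^g\tr(\cP_kD)$, and since $D=\begin{bmatrix}0&0\\0&1/p_g\end{bmatrix}$ sits at the end of the product, each $\tr(\cP_kD)$ is verbatim the summand of \eqref{alternativqg}, which proves the lemma.

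The one step that requires care is the description of the residues $\cP_k$: one must check that every $\bc_k$ is a simple pole of $\cP$ and that the neighbouring regular factors evaluate to the stated products $\prod_{j=0}^{k-2}\fa(\bc_k,\cdot)$ and $\prod_{j=k}^{g-1}\fa(\bc_k,\cdot)$. This is the main (though routine) obstacle, and it is exactly the bookkeeping that already underlies \eqref{explrhk}. Indeed, since $\fa(z;p_g,q_g)$ is regular at $\bc_k$, one has $\lambda_k=-\Res_{\bc_k}V=\tr[\cP_k\,\fa(\bc_k;p_g,q_g)]$, which by cyclicity is precisely \eqref{explrhk}; the present formula is the same quantity with the growing factor $\fa(\bc_k;p_g,q_g)$ replaced by the leading coefficient $D$ of $\fa(z;p_g,q_g)$, reflecting that here we extract the $z^0$ term of $V$ rather than a residue.
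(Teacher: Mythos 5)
Your argument is correct and is essentially the paper's own proof: both extract $q_g+\bc_0$ as the coefficient of $1/z$ in $\tr[\cP(z)D]$ (the paper writes this as $\frac{1}{2\pi i}\oint_{|z|=R}\tr\,\fA_0(z)D\,dz$ and invokes the second relation of \eqref{explpqg}, while you match the $z^0$ terms of the two representations of $V(z)$), and then evaluate it as the sum of residues at the $\bc_k$, with the same residue matrices $\cP_k$. The only cosmetic difference is that you re-derive the identification of the constant term with $\bc_0$ from Theorem \ref{inth7} instead of quoting \eqref{explpqg} directly.
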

\begin{proof}
From the second relation in \eqref{explpqg} one has
$$
q_g+\bc_0=\frac 1{2\pi i}\oint_{|z|=R}\tr\, \fA_0(z)\begin{bmatrix}
0& 0\\ 0&\frac 1{p_g}
\end{bmatrix}dz=\sum_{k=1}^g\Res_{\bc_k}
\tr\, \fA_0(z)\begin{bmatrix}
0& 0\\ 0&\frac 1{p_g}
\end{bmatrix},
$$
which is \eqref{alternativqg}.
\end{proof}

\section{GSMP matrices, general case.}
 
 We hope after Theorem \ref{inth7}, and especially \eqref{explrhk}, it would be easy to perceive the following notations. 
\begin{notation} For $k=1,\dots, g$ the functions (polynomials)
$$
\Lambda^\#_{j,k}=\Lambda^\#_k(p^{(j+1)}_0,q^{(j+1)}_0,\dots,
p^{(j+1)}_{k-1},q^{(j+1)}_{k-1};p^{(j)}_{k-1},q^{(j)}_{k-1},\dots, p^{(j)}_g,q^{(j)}_g)
$$ 
are given by 
\begin{eqnarray}\label{deflad}
\Lambda^\#_{j,k}&=&-\tr\{
\prod_{m=0}^{k-2}\fa(\bc_k,\bc_{m+1};p^{(j+1)}_m,q^{(j+1)}_m)
\begin{bmatrix}
p^{(j+1)}_{k-1}\\ q^{(j+1)}_{k-1}
\end{bmatrix}
\begin{bmatrix}
p^{(j)}_{k-1}& q^{(j)}_{k-1}
\end{bmatrix}\fj \nonumber\\
&\times&
\prod_{m=k}^{g-1}\fa(\bc_k,\bc_{m+1};p^{(j)}_m,q^{(j)}_m)
\fa(\bc_k;p^{(j)}_g,q^{(j)}_g)\}
\end{eqnarray}
If $p^{(j)}_m=p^{(j+1)}_m=p_m$  and $q^{(j)}_m=q^{(j+1)}_m=q_m$ for all $m\in [0,g]$ we simplify this notations to
\begin{equation}\label{defla}
\Lambda_k(\vp,\vq)=\Lambda^\#_k(p_0,q_0,\dots,
p_{k-1},q_{k-1};p_{k-1},q_{k-1},\dots, p_g,q_g),
\end{equation}
and $\Lambda^\#_{k}:=\Lambda^\#_{j,k}$.
\end{notation}
\begin{lemma}\label{lem:gsmpEntries}
Let $A$ belong to the GSMP class. Then the vector $f(\bc_k):=(\bc_k-A)^{-1}e_{k-1}$, $k=1,..,g$, is of the form
\begin{equation}\label{egsmp21}
f_j(\bc_k)=0, \quad j\not\in\{-1,0,1\}, \ f_j(\bc_k)=\{(f_j(\bc_k))_n\}_{n=0}^g\in\bbC^{g+1}
\end{equation}
and $(f_{-1})_0=...=(f_{-1})_{k-2}=0$, $(f_{1})_k=...=(f_{1})_{g}=0$. The non-zero entries are given by:
\begin{eqnarray}
(f_{-1})_{k-1}&=&\frac{1}{\Lambda^\#_{-1,k}},\qquad (f_{1})_{k-1}=\frac{1}{\Lambda^\#_{0,k}}, \label{eqsmp21a}\\
(f_{-1})_l&=&-\frac{1}{\bc_{l+1}-\bc_k}\begin{bmatrix}
q^{(-1)}_{k-1}& -p^{(-1)}_{k-1}
\end{bmatrix}\nonumber\\
&\times&
\prod_{j=k}^{l-1}\fa(\bc_k,\bc_{j+1},p_j^{(-1)},q_j^{(-1)})
\begin{bmatrix}
p_l^{(-1)}\\
q_l^{(-1)}
\end{bmatrix}
\frac{1}{\Lambda^\#_{-1,k}}, \label{eqsmp21b}\\
(f_{1})_{m}&=&-\frac{1}{\bc_{m+1}-\bc_k}\begin{bmatrix}
q^{(1)}_{m}& -p^{(1)}_{m}
\end{bmatrix} \nonumber\\
&\times&\prod_{j=m+1}^{k-2}\fa(\bc_k,\bc_{j+1},p_j^{(1)},q_j^{(1)})
\begin{bmatrix}
p_{k-1}^{(1)}\\
q_{k-1}^{(1)}
\end{bmatrix}
\frac{1}{\Lambda^\#_{0,k}}.\label{eqsmp21d}
\end{eqnarray}
\end{lemma}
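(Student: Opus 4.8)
The goal is to compute the vector $f(\bc_k) = (\bc_k - A)^{-1}e_{k-1}$ for a GSMP matrix $A$, establishing both its support structure \eqref{egsmp21} and the explicit entries \eqref{eqsmp21a}--\eqref{eqsmp21d}. The plan is to exploit the definition of the GSMP class directly: by definition, $S^{-k}(\bc_k - A)^{-1}S^k$ is again GSMP-structured, hence a $(2g+3)$-diagonal $(g+1)$-block Jacobi matrix. Shifting back, this immediately forces the support condition $f_j(\bc_k) = 0$ for $j \notin \{-1,0,1\}$, since the resolvent has only three nonzero block-diagonals in the shifted frame. The triangular vanishing patterns $(f_{-1})_0 = \dots = (f_{-1})_{k-2} = 0$ and $(f_1)_k = \dots = (f_1)_g = 0$ should come from matching the specific banded structure \eqref{n2}--\eqref{n3} of the GSMP blocks against $e_{k-1}$ as the right-hand side, tracking which entries of the inverse can be nonzero.

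First I would set up the equation $(\bc_k - A)f(\bc_k) = e_{k-1}$ explicitly in block form, writing out the three relevant block-rows (indices $-1$, $0$, $1$) using the off-diagonal $A(\vp) = \delta_g \vp\,^*$ and diagonal $B(\vp,\vq) = (\vq\vp\,^*)^- + (\vp\vq\,^*)^+ + \tilde\bC$ structure from \eqref{n2}. The key computational device is the factorization of the transfer/resolvent data in terms of the Blaschke--Potapov factors $\fa(z,\bc;p,q)$ from \eqref{facto2}--\eqref{facto3}. Since the $\Lambda^\#_{j,k}$ in \eqref{deflad} are built precisely from products of these factors evaluated at $z = \bc_k$, I expect the normalizing constants $(f_{-1})_{k-1} = 1/\Lambda^\#_{-1,k}$ and $(f_1)_{k-1} = 1/\Lambda^\#_{0,k}$ to emerge from solving a two-by-two system analogous to the residue computation \eqref{explrhk} in Theorem \ref{inth7}. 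The remaining entries \eqref{eqsmp21b} and \eqref{eqsmp21d} should then follow by propagating through the recursion $M_{j-1} = \dots$ \eqref{mdefj} downward (for the $f_{-1}$ entries, indices $l \ge k$) and upward (for the $f_1$ entries, indices $m \le k-2$), each step contributing one Blaschke--Potapov factor and the rational prefactor $\frac{1}{\bc_{l+1}-\bc_k}$ or $\frac{1}{\bc_{m+1}-\bc_k}$ that reflects the pole structure of $(\bc_k - A)^{-1}$ at the shifted points.

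I would organize the entry computation as two telescoping inductions. For $(f_{-1})_l$ with $l$ increasing from $k-1$ to $g$, I would show that the partial products $\prod_{j=k}^{l-1}\fa(\bc_k,\bc_{j+1};p_j^{(-1)},q_j^{(-1)})$ accumulate exactly the way the transfer matrix accumulates in \eqref{facto3}, with the row vector $\begin{bmatrix} q^{(-1)}_{k-1} & -p^{(-1)}_{k-1}\end{bmatrix}$ playing the role of the boundary data (consistent with the $\fj$-rotated structure appearing in \eqref{deflad}). Symmetrically, for $(f_1)_m$ with $m$ decreasing from $k-2$ to $0$, the products run $\prod_{j=m+1}^{k-2}$. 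The normalization by $\Lambda^\#_{-1,k}$ and $\Lambda^\#_{0,k}$ respectively ensures the single nonzero component of $e_{k-1}$ is correctly reproduced on the right-hand side.

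The main obstacle, I expect, is bookkeeping the precise indexing and sign conventions so that the Blaschke--Potapov products in \eqref{eqsmp21b}--\eqref{eqsmp21d} align exactly with the $\Lambda^\#_{j,k}$ defined in \eqref{deflad}; in particular, matching the two distinct coefficient families $\{p^{(-1)}_j,q^{(-1)}_j\}$ and $\{p^{(0)}_j,q^{(0)}_j\}$ (which enter the $j=-1$ versus $j=0$ superscripts) to the correct block-rows of $A$, and verifying that the $\fj$ matrix and the residue sign from \eqref{explrhk} propagate consistently. Rather than grinding through the full $(g+1)\times(g+1)$ linear algebra, I would reduce each band to a scalar recurrence and invoke the already-established transfer-matrix factorization \eqref{facto3}, which encapsulates exactly the product structure needed; the genuinely new content is only the boundary handling at index $k-1$ and the vanishing outside $\{-1,0,1\}$, both of which are dictated by the GSMP band structure.
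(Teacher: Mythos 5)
Your proposal is correct and follows essentially the same route as the paper: write out the block equations of $(\bc_k-A)f=e_{k-1}$ for the three relevant block rows, solve them by a recurrence along each band, and identify the telescoping partial sums with products of Blaschke--Potapov factors $\fa(\bc_k,\bc_{j+1};p_j,q_j)$, with the normalizations $1/\Lambda^\#_{-1,k}$ and $1/\Lambda^\#_{0,k}$ coming out of the boundary rows. The only cosmetic difference is that you justify the vanishing outside blocks $\{-1,0,1\}$ via the banded structure of $S^{-k}(\bc_k-A)^{-1}S^k$ from the GSMP definition, whereas the paper gets it from the consistency of the three-block ansatz (the outer equations $-A_{-1}f_{-1}=0$ and $-A_2^*f_1=0$); both are fine.
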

\begin{proof}
Using \eqref{mdef0}, we solve the following linear system 
\begin{eqnarray}
-A_{-1}f_{-1}&=&0,  \label{egsmp22}\\
(\bc_k-B_{-1})f_{-1}-A_0f_0&=&0, \label{egsmp23}\\
-A_0^*f_{-1}+(\bc_k-B_{0})f_{0}-A_1f_1&=&\delta_{k-1},\label{egsmp24}\\
-A_1^*f_{0}+(\bc_k-B_{1})f_{1}&=&0,\label{egsmp25}\\
-A_2^*f_{1}&=&0.\label{egsmp26}
\end{eqnarray}
From this we obtain \eqref{eqsmp21a}, and the following recurrence relations:
\begin{eqnarray*}
(f_{-1})_l=\frac{1}{\bc_{l+1}-\bc_k}
\Bigg(
\begin{bmatrix}
-q^{(-1)}_{k-1}&p^{(-1)}_{k-1}
\end{bmatrix}
\begin{bmatrix}
p_l^{(-1)}\\
q_l^{(-1)}
\end{bmatrix}
\frac{1}{\Lambda^\#_{-1,k}}\\
-\sum_{j=k}^{l-1}
\begin{bmatrix}
q_j^{(-1)}& -p_j^{(-1)}
\end{bmatrix}
\begin{bmatrix}
p_l^{(-1)}\\
q_l^{(-1)}
\end{bmatrix}
(f^k_{-1})_{j}
\Bigg)&
\end{eqnarray*}
and 
\begin{eqnarray*}
(f_{0})_m=-\frac{1}{\bc_{m+1}-\bc_k}
\Bigg(
\begin{bmatrix}
q^{(1)}_{m}& -p^{(1)}_{m}
\end{bmatrix}
\begin{bmatrix}
p_{k-1}^{(1)}\\
q_{k-1}^{(1)}
\end{bmatrix}
\frac{1}{\Lambda^\#_{0,k}}&\\
-\sum_{j=m+1}^{k-2}
\begin{bmatrix}
	q^{(1)}_{m}& -p^{(1)}_{m}
\end{bmatrix}
\begin{bmatrix}
	-p_{j}^{(1)}\\
	-q_{j}^{(1)}
\end{bmatrix}
(f^k_{0})_{j}
\Bigg).&
\end{eqnarray*}
By induction, we see that 
\begin{eqnarray*}
(f_{-1})_l&=&
\frac{1}{\bc_{m+1}-\bc_k}
\begin{bmatrix}
-q^{(-1)}_{k-1}& p^{(-1)}_{k-1}
\end{bmatrix}\\
&\times&\Bigg(
I
-\sum_{j=k}^{l-1}\frac{1}{\bc_{j+1}-\bc_k}
\prod_{i=k}^{j-1}\fa(\bc_k,\bc_{i+1},p_i^{(-1)},q_i^{(-1)})
\begin{bmatrix}
p_j^{(-1)}\\
q_j^{(-1)}
\end{bmatrix}
\begin{bmatrix}
p_j^{(-1)}& q_j^{(-1)}
\end{bmatrix}\fj
\Bigg)\\
&\times&\begin{bmatrix}
p_l^{(-1)}\\
q_l^{(-1)}
\end{bmatrix}
\frac{1}{\Lambda^\#_{-1,k}}.
\end{eqnarray*}
Recursively, we find
\begin{eqnarray*}
& I&
-\sum_{j=k}^{l-1}\frac{1}{\bc_{j+1}-\bc_k}
\prod_{i=k}^{j-1}\fa(\bc_k,\bc_{i+1},p_i^{(-1)},q_i^{(-1)})
\begin{bmatrix}
p_j^{(-1)}\\
q_j^{(-1)}
\end{bmatrix}
\begin{bmatrix}
p_j^{(-1)}& q_j^{(-1)}
\end{bmatrix}\fj\\
&=&\fa(\bc_k,\bc_{k+1},p^{(-1)}_{k},q^{(-1)}_{k})\\
&\times&\Bigg(I-\sum_{j=k+1}^{l-1}\frac{1}{\bc_{j+1}-\bc_k}
\prod_{i=k+1}^{j-1}\fa(\bc_k,\bc_{i+1},p_i^{(-1)},q_i^{(-1)})
\begin{bmatrix}
p_j^{(-1)}\\
q_j^{(-1)}
\end{bmatrix}
\begin{bmatrix}
p_j^{(-1)}& q_j^{(-1)}
\end{bmatrix}\fj\Bigg)\\
&=&\prod_{j=k}^{l-1}\fa(\bc_k,\bc_{j+1},p_j^{(-1)},q_j^{(-1)}).
\end{eqnarray*}
In a similar way, one can show \eqref{eqsmp21d}.
\end{proof}

\begin{theorem}\label{defaltdef}
A GSMP structured matrix $A$ belongs to the GSMP class if and only if the forming sequences $\{\vp_j,\vq_j\}$ satisfy the following conditions
\begin{equation}\label{altdef}
\inf_{j\in\bbZ}\Lambda^\#_{j,k}>0, \ \text{for all}\ k=1,\dots,g.
\end{equation}
\end{theorem}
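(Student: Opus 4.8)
The plan is to read both defining requirements of the $\GSMP$ class directly off the explicit resolvent formulas of Lemma~\ref{lem:gsmpEntries}. Recall that, by definition, $A$ belongs to $\GSMP(\bC)$ if and only if for every $k=1,\dots,g$ the operator $\bc_k-A$ is boundedly invertible and the shifted resolvent $\tilde R^{(k)}:=S^{-k}(\bc_k-A)^{-1}S^k$ is again GSMP-structured. The crucial output of Lemma~\ref{lem:gsmpEntries} is that the entries of $(\bc_k-A)^{-1}$ lying on the last, $(g+1)$-th, diagonal are exactly the reciprocals $1/\Lambda^\#_{j,k}$: indeed $(f_1)_{k-1}=1/\Lambda^\#_{0,k}$ and $(f_{-1})_{k-1}=1/\Lambda^\#_{-1,k}$ in \eqref{eqsmp21a}, and applying the lemma to the block-shifted columns $(\bc_k-A)^{-1}e_{k-1+(g+1)m}$ produces $1/\Lambda^\#_{m,k}$ in the same diagonal position. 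Conjugation by $S^k$ does not move entries off their diagonal, so these reciprocals are precisely the numbers $\tilde p^{(j)}_g$ that play, for $\tilde R^{(k)}$, the role controlled by \eqref{n5} for a GSMP-structured matrix.

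For the necessity direction, suppose $A\in\GSMP(\bC)$. Then $R:=(\bc_k-A)^{-1}$ is a bounded operator, so each of its matrix entries is bounded in modulus by $\|R\|$. In particular $|1/\Lambda^\#_{j,k}|\le\|R\|$ for all $j$, whence $\inf_{j}|\Lambda^\#_{j,k}|\ge 1/\|R\|>0$; the sign is fixed by the normalization $\tilde p^{(j)}_g>0$ inherited from \eqref{n3} for the GSMP-structured matrix $\tilde R^{(k)}$, so $\inf_j\Lambda^\#_{j,k}>0$. As $k$ was arbitrary, \eqref{altdef} holds.

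For the sufficiency direction, assume \eqref{altdef}. In particular $\Lambda^\#_{j,k}\neq0$, so the right-hand sides of \eqref{eqsmp21a}--\eqref{eqsmp21d}, together with their $m$-shifts, are well defined and produce candidate columns $f(\bc_k,m)$. I would re-read the computation in the proof of Lemma~\ref{lem:gsmpEntries} purely as the solution of the linear system \eqref{egsmp22}--\eqref{egsmp26}, valid whenever the $\Lambda^\#$ do not vanish and independent of any a~priori membership of $A$ in the class; this shows $(\bc_k-A)f(\bc_k,m)=e_{k-1+(g+1)m}$, so the $f(\bc_k,m)$ assemble into a $(2g+3)$-diagonal operator $R$ that is a two-sided inverse of $\bc_k-A$ (two-sidedness follows from symmetry of $\bc_k-A$). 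Since $A$ is bounded, the vectors $\vp_j,\vq_j$ are uniformly bounded, hence the numerators in \eqref{eqsmp21a}--\eqref{eqsmp21d} are uniformly bounded while the denominators are bounded below by $\inf_j\Lambda^\#_{j,k}>0$; therefore the entries of $R$ are uniformly bounded and $R$ is a bounded operator, i.e. $\bc_k-A$ is boundedly invertible. The support and vanishing patterns recorded in \eqref{egsmp21} then show that $\tilde R^{(k)}=S^{-k}RS^k$ has precisely the GSMP-structured shape \eqref{n1}--\eqref{n2}, with last-diagonal entries $\tilde p^{(j)}_g=1/\Lambda^\#_{j,k}$; boundedness of $A$ gives $\sup_j\Lambda^\#_{j,k}<\infty$, so $\inf_j\tilde p^{(j)}_g>0$, which is the analogue of \eqref{n5} for $\tilde R^{(k)}$. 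Hence $\tilde R^{(k)}$ is GSMP-structured for every $k$, and $A\in\GSMP(\bC)$.

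The main obstacle is the sufficiency direction, and within it two points deserve care. First, Lemma~\ref{lem:gsmpEntries} is stated under the hypothesis $A\in\GSMP(\bC)$, so to use its formulas in the converse I must isolate the purely algebraic part of its proof and verify that the explicit columns solve \eqref{egsmp22}--\eqref{egsmp26} and genuinely define a two-sided, not merely formal one-sided, bounded inverse. Second, one has to check that all the zero patterns in \eqref{egsmp21}---not only the location of the $1/\Lambda^\#_{j,k}$ on the $(g+1)$-th diagonal---combine after conjugation by $S^k$ into the exact off-diagonal form $A(\vp)=\delta_g\vp^{\,*}$ and the diagonal form required by \eqref{n2}. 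Boundedness of $A$ plays a quiet but essential double role: it simultaneously turns \eqref{altdef} into bounded invertibility (through the lower bound $\inf_j\Lambda^\#_{j,k}>0$) and makes the complementary normalization $\inf_j\tilde p^{(j)}_g>0$ automatic (through the upper bound $\sup_j\Lambda^\#_{j,k}<\infty$).
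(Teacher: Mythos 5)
Your overall strategy is the same as the paper's: the printed proof of Theorem \ref{defaltdef} is a single sentence pointing at the linear system \eqref{egsmp22}--\eqref{egsmp26} of Lemma \ref{lem:gsmpEntries}, and you are reading off the same formulas. Your necessity argument (the operator--norm bound on the entries of $(\bc_k-A)^{-1}$ combined with the sign normalization $\tilde p^{(j)}_g>0$ from \eqref{n3}) is correct and is more explicit than anything in the paper, as is your observation that the counterpart of \eqref{n5} for $S^{-k}(\bc_k-A)^{-1}S^k$ amounts to $\sup_j\Lambda^\#_{j,k}<\infty$ and therefore comes for free. One small caveat there: "$A$ bounded $\Rightarrow$ $\vp_j,\vq_j$ uniformly bounded" already uses the standing assumption \eqref{n5} for $A$ (the $\vq_j$ enter the entries of $B(\vp_j,\vq_j)$ only through products such as $q_ip_g$, and the factor $\fa(\bc_k;p_g,q_g)$ contains $1/p_g$), so the upper bound on $\Lambda^\#_{j,k}$ rests on \eqref{n5}, not on boundedness alone.

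The genuine gap is in the sufficiency direction. Lemma \ref{lem:gsmpEntries} and its block shifts produce only the columns $(\bc_k-A)^{-1}e_{k-1+(g+1)m}$, $m\in\bbZ$, i.e.\ one column out of every $g+1$. These vectors do not "assemble into" an operator on $\l^2$: they define a candidate right inverse only on the closed span of $\{e_{k-1+(g+1)m}\}_m$, and the symmetry of $\bc_k-A$ cannot upgrade a right inverse defined on a proper subspace to a two-sided inverse. To close the argument you must also solve $(\bc_k-A)f=e_n$ for the remaining residues of $n$ modulo $g+1$; these are different (though analogous) finite linear systems with different determinants. The paper in fact solves one of them immediately after this theorem: the computation of $h(\bc_k)=(\bc_k-A)^{-1}e_{-1}$ leading to \eqref{altd2bis} has determinant $p_g^{(-1)}\Lambda^\#_{-1,k}$, so its solvability and the uniform boundedness of its entries require \eqref{n5} in addition to \eqref{altdef}. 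Once all $g+1$ column types are produced with uniformly bounded, finitely supported entries, you obtain a bounded banded right inverse $R$ on all of $\l^2$, and only then does your symmetry remark ($R^*(\bc_k-A)=((\bc_k-A)R)^*=I$) legitimately give injectivity and a two-sided inverse, after which the structural identification of $S^{-k}RS^k$ goes through as you describe. The gap is fillable, and in fairness it is equally invisible in the paper's own one-line proof, but as written your construction of $R$ is incomplete.
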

\begin{proof}\label{thstr} Solvability of the system \eqref{egsmp22}-\eqref{egsmp26} is equivalent to \eqref{eqsmp21a} and \eqref{eqsmp21c}. In this case all $\bc_k-A$ are invertible.

\end{proof}
In particular, we can evaluate the \textit{diagonal} entries of $V(A)$, 
%i.e., $\langle V(A) e_{-1},e_{-1}\rangle$, 
an explicit formula for which is required in what follows.
\begin{lemma}
Let
\begin{equation}\label{altda1}
\begin{bmatrix}
\tilde \pi ^{(0)}_{k-1}\\
\tilde \rho^{(0)}_{k-1}
\end{bmatrix}=\prod_{j=0}^{k-2}\fa(\bc_k,\bc_{j+1};p^{(0)}_j,q_j^{(0)})\begin{bmatrix}p_{k-1}^{(0)}\\
q_{k-1}^{(0)}
\end{bmatrix}
\end{equation}
and
\begin{equation}\label{altda2}
\begin{bmatrix}
\pi ^{(-1)}_{k-1}&
 \rho^{(-1)}_{k-1}
\end{bmatrix}=
\begin{bmatrix}q_{k-1}^{(-1)}&
-p_{k-1}^{(-1)}
\end{bmatrix}
\prod_{j=k}^{g-1}\fa(\bc_k,\bc_{j+1};p^{(-1)}_j,q_j^{(-1)})~\fj
\end{equation}
Then
\begin{equation}\label{altd2bis}
\langle V(A) e_{-1},e_{-1}\rangle=\lambda_0 p^{(-1)}_g q^{(-1)}_g+\bc_0-\sum_{k=1}^g\lambda_k\frac{\pi^{(-1)}_{k-1}\tilde \rho^{(0)}_{k-1}}{p_g^{(-1)}\Lambda^\#_{-1,k}}.
\end{equation}

\end{lemma}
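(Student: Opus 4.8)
The plan is to expand $V(A)$ by its partial fraction decomposition \eqref{vofa},
$$
V(A)=\lambda_0 A+\bc_0 I+\sum_{k=1}^{g}\lambda_k(\bc_k-A)^{-1},
$$
and to read off the $(-1,-1)$ diagonal entry of each summand. The affine part is immediate. Since $e_{-1}$ is the last ($g$-th) basis vector of the block carrying $B(\vp_{-1},\vq_{-1})$, the structure \eqref{n2} gives
$$
\langle A e_{-1},e_{-1}\rangle=\bigl(B(\vp_{-1},\vq_{-1})\bigr)_{g,g}=p^{(-1)}_g q^{(-1)}_g,
$$
because among the three terms $(\vq\vp^{\,*})^-+(\vp\vq^{\,*})^++\tilde\bC$ only the first is nonzero on the diagonal (the second is strictly lower triangular and $\tilde\bC_{g,g}=0$). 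Thus the affine part contributes exactly $\lambda_0 p^{(-1)}_g q^{(-1)}_g+\bc_0$, the first two terms of \eqref{altd2bis}.

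It remains to prove $\langle(\bc_k-A)^{-1}e_{-1},e_{-1}\rangle=-\pi^{(-1)}_{k-1}\tilde\rho^{(0)}_{k-1}/(p^{(-1)}_g\Lambda^\#_{-1,k})$ for each $k$. I would obtain this by solving the block tridiagonal system $(\bc_k-A)h=e_{-1}$ exactly as in the proof of Lemma~\ref{lem:gsmpEntries}, the only change being that the source $\delta_g$ now sits in the block-$(-1)$ row instead of $\delta_{k-1}$ in the block-$0$ row; compare \eqref{egsmp22}--\eqref{egsmp26}. Since every off-diagonal block is the rank-one matrix $A(\vp)=\delta_g\vp^{\,*}$, the inter-block coupling only reads and feeds the last coordinate, which both localizes $h$ to the blocks neighbouring the source and turns each within-block equation into a first order recurrence. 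The quantity sought is then $\langle(\bc_k-A)^{-1}e_{-1},e_{-1}\rangle=(h_{-1})_g$.

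Because the source sits at the coupling position $g$, the solution splits into two recurrences joined through the scalar $\langle h_0,\vp_0\rangle$ at the interface of blocks $-1$ and $0$. Propagating the block-$(-1)$ coordinates downward from position $g$ to position $k-1$ telescopes, by the same induction used at the end of the proof of Lemma~\ref{lem:gsmpEntries}, into $\begin{bmatrix}q^{(-1)}_{k-1}&-p^{(-1)}_{k-1}\end{bmatrix}\prod_{j=k}^{g-1}\fa(\bc_k,\bc_{j+1};p^{(-1)}_j,q^{(-1)}_j)\fj$, i.e. into $\pi^{(-1)}_{k-1}$ from \eqref{altda2}; propagating the block-$0$ coordinates upward from position $0$ to $k-1$ telescopes into $\prod_{j=0}^{k-2}\fa(\bc_k,\bc_{j+1};p^{(0)}_j,q^{(0)}_j)\begin{bmatrix}p^{(0)}_{k-1}\\ q^{(0)}_{k-1}\end{bmatrix}$, whose lower entry is $\tilde\rho^{(0)}_{k-1}$ from \eqref{altda1}. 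The solvability of the local system supplies the normalizing denominator $\Lambda^\#_{-1,k}$, exactly as the factor $1/\Lambda^\#_{-1,k}$ appeared in \eqref{eqsmp21a}, while the remaining factor $1/p^{(-1)}_g$ comes from normalizing the coupling out of position $g$ through the block $\fa(\bc_k;p^{(-1)}_g,q^{(-1)}_g)$.

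I expect the main obstacle to be purely this bookkeeping: verifying that the two one-sided recurrences collapse to the advertised Blaschke--Potapov products, and, above all, matching the signs together with the two normalizing factors $p^{(-1)}_g$ and $\Lambda^\#_{-1,k}$ --- in particular reconciling the product $\pi^{(-1)}_{k-1}$, which lacks the last factor $\fa(\bc_k;p^{(-1)}_g,q^{(-1)}_g)$, with the definition \eqref{deflad} of $\Lambda^\#_{-1,k}$, which contains it. No new idea beyond Lemma~\ref{lem:gsmpEntries} is required; the difficulty is keeping the telescoping and the placements of $\fj$ consistent with the conventions \eqref{altda1}--\eqref{altda2}.
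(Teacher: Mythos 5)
Your proposal is correct and follows essentially the same route as the paper: the affine part of \eqref{vofa} contributes $\lambda_0 p^{(-1)}_g q^{(-1)}_g+\bc_0$ via the $(g,g)$ entry of $B(\vp_{-1},\vq_{-1})$, and each resolvent term is computed by solving the block system $(\bc_k-A)h=e_{-1}$, whose only nontrivial components $h_{-1},h_0$ reduce (after the telescoping you describe) to a $3\times3$ system in $(h_{-1})_{k-1},(h_0)_{k-1},(h_{-1})_g$ with determinant $p_g^{(-1)}\Lambda^{\#}_{-1,k}$. The bookkeeping you flag as the main obstacle is exactly the content of the paper's determinant identity, so no new idea is missing.
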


\begin{proof}
Indeed, we have to find $h=h(\bc_k)=(\bc_k-A)^{-1}e_{-1}$, $1\le k\le g$.
We note that $h$ has only two non-trivial components $h_{-1}$ and $h_0$ for which we have
$$
\begin{matrix}
-A_{-1}h_{-1}&=&0,\\
(\bc_k-B_{-1}) h_{-1}-A_0 h_0&=&\delta_g,\\
-A_0^* h_{-1}+(\bc_k-B_0)h_0&=&0,\\
-A_1^*h_0&=&0.
\end{matrix}
$$
In notations \eqref{altda1}, \eqref{altda2} we obtain 
$$
\begin{bmatrix}
-\pi^{(-1)}_{k-1}& 0&p_g^{(-1)} \\
0&\tilde \rho^{(0)}_{k-1} &1 \\
p_g^{(-1)}\rho^{(-1)}_{k-1}& -\tilde \pi_{k-1}^{(0)}&-(p_g^{(-1)}q_g^{(-1)} -\bc_k)\\
\end{bmatrix}
\begin{bmatrix}
(h_{-1})_{k-1}\\
(h_{0})_{k-1}\\
(h_{-1})_g
\end{bmatrix}=
\begin{bmatrix}
0\\
0\\
1
\end{bmatrix}.
$$
Note that the determinant of the matrix of the given system is of the form
$$
\pi^{(-1)}_{k-1}\tilde \rho^{(0)}_{k-1}(p^{(-1)}_{g}q_g^{(-1)}-\bc_k)-\rho^{(-1)}_{k-1}\tilde \rho^{(0)}_{k-1} (p_g^{(-1)})^2-
\pi^{(-1)}_{k-1}\tilde \pi^{(0)}_{k-1}=p_g^{(-1)}\Lambda^\#_{-1,k}.
$$
Thus, we get 
\begin{equation*}\label{altd2}
(h_{-1})_g=-\frac{\pi^{(-1)}_{k-1}\tilde \rho^{(0)}_{k-1}}{p_g^{(-1)}\Lambda^\#_{-1,k}},
\end{equation*}
which implies \eqref{altd2bis}.
Moreover, we have
\begin{equation*}
(h_{-1})_{k-1}=-\frac{\tilde\rho^{(0)}_{k-1}}{\Lambda^\#_{-1,k}}\quad\text{and}\quad
(h_{0})_{k-1}=\frac{\pi^{(-1)}_{k-1}}{p_g^{(-1)}\Lambda^\#_{-1,k}}.
\end{equation*}
\end{proof}
\begin{remark}
Note that in the periodic case \eqref{altd2bis} leads to \eqref{alternativqg}.
\end{remark}
\section{Jacobi flow, general case}
Let us mention once again that Theorem \ref{th211} gives already a certain hint for the correct definition of the Jacobi flow. It will be  defined via the unitary transformation, which after $g$ rotations and one shift, maps GSMP$(\bc_1,\dots,\bc_g)$ 
into itself.
The first rotation creates the matrix $\tilde A$ which belongs (up to a suitable shift) to GSMP$(\bc_g,\bc_1,...,\bc_{g-1})$ class. Then we create a matrix of the class GSMP$(\bc_{g-1},\bc_g,\bc_1,...,\bc_{g-2})$, and so on... On the last step (making the shift) we get the required  Jacobi flow transform, see \eqref{jflow}.
Having in  mind \eqref{eqn:ufunctional} and \eqref{jfp0}, we give the following definition.

\begin{definition}\label{defo}
We  define the  map 
$$
\cO:\text{GSMP}(\bc_1,\bc_2,...,\bc_{g})\to\text{GSMP}(\bc_g,\bc_1,...,\bc_{g-1})
$$ in the following way. Let 
$O=O_A$ be the block-diagonal matrix
$$
O=\begin{bmatrix} \ddots& & & \\ & O_{-1}& &\\
& & O_{0} & \\
& & &\ddots
\end{bmatrix}
$$
where $O_k$ are the $(g+1)\times(g+1)$ orthogonal matrices
$$
O_k=\begin{bmatrix}I_{g-2}& 0\\
0& \begin{bmatrix}
\sin\phi_k&
\cos\phi_k\\
\cos\phi_k&-\sin\phi_k
\end{bmatrix}
\end{bmatrix}, 
$$
and
$$
\begin{bmatrix}
\sin\phi_k&
\cos\phi_k\\
\cos\phi_k&-\sin\phi_k
\end{bmatrix}
=\frac{
\begin{bmatrix}
p^{(k)}_{g-1}&
p^{(k)}_{g}\\
p^{(k)}_{g}&-p^{(k)}_{g-1}
\end{bmatrix}}{\sqrt{(p^{(k)}_{g-1})^2+(p^{(k)}_{g})^2}}.
$$
Then
\begin{equation}\label{varpidef}
\cO A:=S O_A^* AO_A S^{-1}.
\end{equation}

\end{definition}

\begin{proof}[Correctness of Definition \ref{defo}]
Let us demonstrate the correctness of this definition.
 For $p$-entries of $\tilde A=\cO A$ we get
\begin{equation}\label{def3}
\tilde p^{(0)}_j=p^{(0)}_{j-1}\cos\phi_{-1}, \quad 1\le j\le g-1;
\end{equation}
\begin{equation}\label{def2}
\tilde p^{(0)}_g=\sqrt{(p^{(0)}_{g-1})^2+(p^{(0)}_{g})^2}\cos\phi_{-1}=
\sqrt{\frac{(p^{(0)}_{g-1})^2+(p^{(0)}_{g})^2}{(p^{(-1)}_{g-1})^2+(p^{(-1)}_{g})^2}}p^{(-1)}_g.
\end{equation}
Also,
\begin{eqnarray}\label{def0}
&\begin{bmatrix}\tilde q_g^{(-1)}\tilde p_g^{(-1)}& \tilde p^{(0)}_0\\
\tilde p_0^{(0)}& \tilde q_0^{(0)}\tilde p_0^{(0)}\end{bmatrix}+
\begin{bmatrix} 0&0\\
0&\bc_g\end{bmatrix}=&
\\
\nonumber
&\begin{bmatrix}
\sin\phi_{-1}&
\cos\phi_{-1}\\
\cos\phi_{-1}&-\sin\phi_{-1}
\end{bmatrix}
\left\{
\begin{bmatrix}\bc_g&0\\
0&0\end{bmatrix}+
\begin{bmatrix}q^{(-1)}_{g-1}p^{(-1)}_{g-1}&q^{(-1)}_{g-1}p^{(-1)}_{g}\\
q^{(-1)}_{g-1}p^{(-1)}_{g}&q^{(-1)}_{g} p^{(-1)}_{g}
\end{bmatrix}\right\}
\begin{bmatrix}
\sin\phi_{-1}&
\cos\phi_{-1}\\
\cos\phi_{-1}&-\sin\phi_{-1}
\end{bmatrix}.&
\end{eqnarray}

For the $q$-entries we have
\begin{equation}\label{def1}
\tilde q^{(0)}_0\tilde p^{(0)}_g=-\sin\phi_{-1}\sqrt{(p^{(0)}_{g-1})^2+(p^{(0)}_{g})^2}
\end{equation}
and
\begin{equation}\label{def4}
\tilde q_{j}^{(0)}\tilde p^0_g=q_{j-1}^{(0)} \sqrt{(p^{(0)}_{g-1})^2+(p_g^{(0)})^2}, \quad 1\le j\le g-1.
\end{equation}

Now we have to check the compatibility condition for $\tilde q_0^0$ because it appears twice in \eqref{def0} and \eqref{def1}. In other words, we have to check that
\begin{equation}\label{comp}
\left|\begin{matrix} \tilde p_0^{(0)}& \tilde p_g^{(0)}\\ \tilde  q_0^{(0)} \tilde p_0^{(0)} &\tilde q_{0}^{(0)}\tilde p_g^{(0)} \end{matrix}\right|=0
\end{equation}
holds,  if the first column is taken from \eqref{def0} and the second one is formed by \eqref{def1} and \eqref{def2}.
We use
$$
\begin{bmatrix}
\sin\phi_{-1}&
\cos\phi_{-1}\\
\cos\phi_{-1}&-\sin\phi_{-1}
\end{bmatrix}\begin{bmatrix} \bc_g&0\\
0&0\end{bmatrix}
\begin{bmatrix}
\sin\phi_{-1}&
\cos\phi_{-1}\\
\cos\phi_{-1}&-\sin\phi_{-1}
\end{bmatrix}
$$
$$
=\bc_g-\begin{bmatrix}
\sin\phi_{-1}&
\cos\phi_{-1}\\
\cos\phi_{-1}&-\sin\phi_{-1}
\end{bmatrix}\begin{bmatrix} 0&0\\
0&\bc_g\end{bmatrix}
\begin{bmatrix}
\sin\phi_{-1}&
\cos\phi_{-1}\\
\cos\phi_{-1}&-\sin\phi_{-1}
\end{bmatrix}
$$
to get
$$
\begin{bmatrix} \tilde p_0^{(0)}\\ \tilde  q_0^{(0)} \tilde p_0^{(0)} \end{bmatrix}=
\begin{bmatrix}\cos\phi_{-1}\\
-\sin\phi_{-1}
\end{bmatrix}\sin\phi_{-1}\bc_g
+\begin{bmatrix}
\cos\phi_{-1}\\
-\sin\phi_{-1}
\end{bmatrix}p_g^{(-1)}(q_{g-1}^{(-1)}\cos\phi_{-1}-q_{g}^{(-1)}\sin\phi_{-1}).
$$
But
$$
\begin{bmatrix}  \tilde p_g^{(0)}\\ \tilde q_{0}^{(0)}\tilde p_g^{(0)} \end{bmatrix}=\begin{bmatrix}\cos\phi_{-1}\\
-\sin\phi_{-1}
\end{bmatrix}\sqrt{(p^{(0)}_{g-1})^2+(p^{(0)}_{g})^2}.
$$
Thus, \eqref{comp} is proved.

Using \eqref{def3}, \eqref{def4} and \eqref{def2}, we obtain
$$
\tilde q_{j}^{(0)}\tilde p_{k}^{(0)}=q_{j-1}^{(0)}\frac{\sqrt{(p^{(0)}_{g-1})^2+(p^{(0)}_{g})^2}}{\tilde p^{(0)}_g}p^{(0)}_{k-1}\cos\phi_{-1}=
q_{j-1}^{(0)}p_{k-1}^{(0)}, \quad 1\le j,k\le g-1.
$$
Moreover, by  \eqref{def3}, \eqref{def2} and \eqref{def1}, we see that
$$
\tilde q^{(0)}_0\tilde p^{(0)}_j=-\sin\phi_{-1}\frac{\sqrt{(p^{(0)}_{g-1})^2+(p^{0}_{g})^2}}{\tilde p^{(0)}_g}p^0_{j-1}\cos\phi_{-1}
=-\sin\phi_{-1}p^{(0)}_{j-1}
$$
and, therefore, the GSMP$(\bc_g,\bc_1,...,\bc_{g-1})$-structure of $\tilde A$ is completely established.
\end{proof}

Our next definition is a counterpart of \eqref{perjac}.

\begin{definition}\label{defjf}
We  define the  Jacobi flow transform 
$$
\cJ:\text{GSMP}(\bc_1,\bc_2,...,\bc_{g})\to\text{GSMP}(\bc_1,\bc_2,...,\bc_{g})
$$ by 
\begin{equation}\label{jflow}
\cJ A=S^{-(g+1)}\cO^{\circ g} AS^{g+1}=\cO^{\circ g}(S^{-(g+1)} AS^{g+1}).
\end{equation}
\end{definition}

Let us  note that 
\begin{equation}\label{comvarpi}
S^{-(g+1)}\cO(A) S^{g+1}=\cO(S^{-(g+1)}A S^{g+1}).
\end{equation}
This has an important consequence.

\begin{corollary}\label{corshift}
\begin{equation}\label{shiftjflow}
\cO(\cJ^{\circ n} A)=\cJ^{\circ n}(\cO A).
\end{equation}
\end{corollary}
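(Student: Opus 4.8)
The plan is to deduce the $n$-fold statement from the single commutation $\cO\circ\cJ=\cJ\circ\cO$ by induction, so the real content is the case $n=1$. The two ingredients are the pair of representations of $\cJ$ in \eqref{jflow}, which write $\cJ$ as $\cO^{\circ g}$ composed with the shift-conjugation $A\mapsto S^{-(g+1)}AS^{g+1}$ in either order, together with the commutation relation \eqref{comvarpi}, which lets a single factor $\cO$ be pushed through this conjugation.

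First I would verify $\cO(\cJ A)=\cJ(\cO A)$. Using the first form in \eqref{jflow}, $\cJ A=\cO^{\circ g}(S^{-(g+1)}AS^{g+1})$, so the left-hand side becomes $\cO^{\circ(g+1)}(S^{-(g+1)}AS^{g+1})$. For the right-hand side I would apply the second form of \eqref{jflow} to $\cO A$ and then invoke \eqref{comvarpi} in the form $S^{-(g+1)}(\cO A)S^{g+1}=\cO(S^{-(g+1)}AS^{g+1})$; this turns $\cJ(\cO A)=\cO^{\circ g}(S^{-(g+1)}(\cO A)S^{g+1})$ into $\cO^{\circ(g+1)}(S^{-(g+1)}AS^{g+1})$ as well, so the two expressions agree. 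The induction is then immediate: assuming $\cO\circ\cJ^{\circ n}=\cJ^{\circ n}\circ\cO$, one has $\cO\circ\cJ^{\circ(n+1)}=(\cO\circ\cJ)\circ\cJ^{\circ n}=(\cJ\circ\cO)\circ\cJ^{\circ n}=\cJ\circ(\cO\circ\cJ^{\circ n})=\cJ^{\circ(n+1)}\circ\cO$.

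The point requiring attention -- and the main, purely bookkeeping obstacle -- is that $\cO$ is not an endomorphism: by Definition \ref{defo} it carries $\GSMP(\bc_1,\dots,\bc_g)$ to $\GSMP(\bc_g,\bc_1,\dots,\bc_{g-1})$, so each composition above silently cycles the ordering of the centers, and $\cO^{\circ g}$ runs through all $g$ cyclic permutations before returning to the original ordering $\bC$. I would therefore record that \eqref{jflow} and \eqref{comvarpi} are valid for every admissible ordering, check that the two sides of the $n=1$ identity both land in $\GSMP(\bc_g,\bc_1,\dots,\bc_{g-1})$, and confirm that the intermediate classes match at each composition in the inductive chain. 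Once this tracking is in place no further computation is needed, since every equality used is one of the two already-established structural identities.
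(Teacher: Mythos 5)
Your proposal is correct and follows essentially the same route as the paper: the paper's proof is exactly your $n=1$ computation, writing $\cJ(\cO A)=\cO^{\circ g}(S^{-(g+1)}\cO A S^{g+1})=\cO^{\circ(g+1)}(S^{-(g+1)}AS^{g+1})=\cO(\cJ A)$ via \eqref{jflow} and \eqref{comvarpi}, with the induction to general $n$ left implicit. Your explicit tracking of the cyclically permuted orderings through the compositions is a reasonable piece of bookkeeping that the paper omits.
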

\begin{proof} Due to \eqref{jflow} and \eqref{comvarpi} we get
$$
\cJ (\cO A)=\cO^{\circ g}(S^{-(g+1)} \cO AS^{g+1})=\cO^{\circ (g+1)}(S^{-(g+1)} AS^{g+1})=\cO(\cJ A).
$$
\end{proof}

Let us turn to explicit formulas for the given transform. First of all, we note that
\begin{equation}\label{jfex0}
\cJ A=S^{-1}U^*_A A U_A S,
\end{equation}
where  $U_A$ is a $(g+1)\times (g+1)$-block diagonal matrix
$$
U_A=U=\begin{bmatrix} \ddots& & & \\ & U(\vp_{-1})& &\\
& & U(\vp_0) & \\
& & &\ddots
\end{bmatrix}.
$$
The block matrices $U=U(\vp)$ are given by products of orthogonal matrices, i.e.,
\begin{equation}\label{eq412}
U(\vp)=\begin{bmatrix}I_{g-2}& 0\\
0& \begin{bmatrix}
\sin\phi_g&
\cos\phi_g\\
\cos\phi_g&-\sin\phi_g
\end{bmatrix}
\end{bmatrix}
\dots
\begin{bmatrix}
 \begin{bmatrix}
\sin\phi_1&
\cos\phi_1\\
\cos\phi_1&-\sin\phi_1
\end{bmatrix} & 0\\
0& I_{g-2}
\end{bmatrix}
\end{equation}
where
\begin{eqnarray}
\begin{bmatrix}
\sin\phi_g&\cos\phi_g
\end{bmatrix}&=&\frac{\begin{bmatrix}p_{g-1}&p_g
\end{bmatrix}}{\sqrt{p_{g-1}^2+p_{g}^2}}\label{ang21}\\
\begin{bmatrix}
\sin\phi_{g-1}&\cos\phi_{g-1}
\end{bmatrix}&=&\frac{\begin{bmatrix}p_{g-2}&\sqrt{p_{g-1}^2+p_g^2}
\end{bmatrix}}{\sqrt{p_{g-2}^2+p_{g-1}^2+p_{g}^2}}\\
\begin{matrix}\vdots& & \vdots&  &\vdots
\end{matrix}
&=& \begin{matrix}\vdots & &\vdots & & \vdots
\end{matrix}\nonumber\\
\begin{bmatrix}
\sin\phi_{1}&\cos\phi_{1}
\end{bmatrix}&=&\frac{\begin{bmatrix}p_{0}&\sqrt{p_{1}^2+\dots+p_g^2}
\end{bmatrix}}{\sqrt{p_{0}^2+p_{1}^2+\dots+p_{g}^2}}\label{ang24}
\end{eqnarray}

\begin{theorem}\label{th54}
Let $A(1)=\cJ A$ and let $\{p^{(j)}_k(1),  q_k^{(j)}(1)\}$ be generating coefficient sequences of $A(1)$. Then
\begin{eqnarray}\label{jfex}
\begin{bmatrix}
 q_0^{(j)}(1)\\
\vdots\\
q_{g-1}^{(j)}(1)
\end{bmatrix}
&=& 
\|\vp_{j}\|
\begin{bmatrix}
\frac{-p_0^{(j)}}{\sqrt{(p_0^{(j)})^2+\dots+(p_g^{(j)})^2}{\sqrt{(p_1^{(j)})^2+\dots+(p_g^{(j)})^2}}}\\
\vdots\\
\frac{-p_{g-1}^{(j)}}{\sqrt{(p_{g-1}^{(j)})^2+(p_g^{(j)})^2}p_g^{(j)}}
\end{bmatrix}
\\ \label{jfex1}
q_g^{(j)}(1)
&=&
\frac 1{p_g^{(j)}}
\frac{\langle B(\vp_{j+1},\vq_{j+1})\,\vp_{j+1},  \vp_{j+1}\rangle\|\vp_{j}\|}{\|\vp_{j+1}\|^3 }
\\\label{jfex3}
\begin{bmatrix}
*\\  p_0^{(j)}(1)\\ \vdots \\ p_{g-1}^{(j)}(1)
\end{bmatrix}
&=&
U^*(\vp_j)B(\vp_{j},\vq_j)U(\vp_j)\delta_0
\\ \label{jfex2}
p_g^{(j)}(1)
&=&
\frac{\|\vp_{j+1}\|}{\|\vp_{j}\|}p_g^{(j)}
\end{eqnarray}
\end{theorem}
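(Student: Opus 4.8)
The plan is to start from the factored form \eqref{jfex0}, $\cJ A=S^{-1}U^*AU S$ with $U=U_A$ block-diagonal and $j$-th block $U(\vp_j)$ the product of Givens reflections in \eqref{eq412}, and to reduce everything to one algebraic lemma: each $U(\vp)$ is the orthogonal matrix that collapses $\vp$ onto its first coordinate,
\[
U^*(\vp)\vp=\|\vp\|\,\delta_0,\qquad\text{equivalently}\qquad U(\vp)\delta_0=\frac{\vp}{\|\vp\|}.
\]
I would prove this by inducting on the reflections of \eqref{eq412} from the bottom up: the factor built from $(\sin\phi_g,\cos\phi_g)$ in \eqref{ang21} sends $(p_{g-1},p_g)$ to $(\sqrt{p_{g-1}^2+p_g^2},0)$, the next sends $(p_{g-2},\sqrt{p_{g-1}^2+p_g^2})$ to $(\sqrt{p_{g-2}^2+p_{g-1}^2+p_g^2},0)$, and so on, the partial sums telescoping exactly as prescribed by \eqref{ang21}--\eqref{ang24} until only the $\|\vp\|$-entry at position $0$ survives.

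Next I would compute $W:=U^*AU$ block by block. Since $U$ is block-diagonal, conjugation acts entrywise on blocks: the diagonal blocks become $\tilde B_j:=U^*(\vp_j)B(\vp_j,\vq_j)U(\vp_j)$, while the super-diagonal block at position $(j-1,j)$ becomes $U^*(\vp_{j-1})\,\delta_g\vp_j^*\,U(\vp_j)$. The lemma performs the essential collapse, $\vp_j^*U(\vp_j)=(U^*(\vp_j)\vp_j)^*=\|\vp_j\|\delta_0^*$, so this block is the rank-one matrix $\|\vp_j\|\,\vec u_{j-1}\delta_0^*$ with $\vec u_j:=U^*(\vp_j)\delta_g$, i.e.\ a single nonzero column (column $0$); its transpose is the sub-diagonal block. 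I would then evaluate $\vec u_j$ explicitly by propagating $\delta_g$ through the reflections from the bottom up, obtaining $(\vec u_j)_0=\prod_{m=1}^g\cos\phi_m=p_g^{(j)}/\|\vp_j\|$ and $(\vec u_j)_r=-\sin\phi_r\prod_{m=r+1}^g\cos\phi_m$ for $1\le r\le g$, where the telescoping of \eqref{ang21}--\eqref{ang24} rewrites the cosine product as $p_g^{(j)}/\sqrt{(p_r^{(j)})^2+\dots+(p_g^{(j)})^2}$.

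Finally I would apply the scalar shift via $A(1)_{m,n}=W_{m+1,n+1}$, which rotates the $(g+1)$-block grid by one position, so that each diagonal block $B(\vp_j(1),\vq_j(1))$ of $A(1)$ is assembled from the trailing $g\times g$ corner of $\tilde B_j$, the rank-one coupling $W_{j,j+1}$ and its transpose, and the single entry $(\tilde B_{j+1})_{0,0}$. Matching entries against the GSMP template \eqref{n1}--\eqref{n3} yields the four formulas. The $(g+1)$-th diagonal of $W$ carries $\|\vp_{j+1}\|(\vec u_j)_0=(\|\vp_{j+1}\|/\|\vp_j\|)p_g^{(j)}$, which is \eqref{jfex2}; the components $p_0^{(j)}(1),\dots,p_{g-1}^{(j)}(1)$, being the off-diagonal entries of $A(1)$ adjacent to this diagonal, equal the shifted $0$-th column $\tilde B_j\delta_0$ by symmetry of $\tilde B_j$, which is \eqref{jfex3}; the $(g,g)$-entry of the diagonal block equals $(\tilde B_{j+1})_{0,0}=\langle B(\vp_{j+1},\vq_{j+1})\vp_{j+1},\vp_{j+1}\rangle/\|\vp_{j+1}\|^2=p_g^{(j)}(1)q_g^{(j)}(1)$, giving \eqref{jfex1} after dividing by \eqref{jfex2}; and the coupling entries $p_g^{(j)}(1)q_s^{(j)}(1)=\|\vp_{j+1}\|(\vec u_j)_{s+1}$, combined with the explicit $\vec u_j$ and \eqref{jfex2}, give \eqref{jfex}.

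I expect the main obstacle to be this last step: the bookkeeping of how a single scalar shift splits each new block across two conjugated blocks of $W$ and fuses the rank-one couplings, together with the explicit telescoping evaluation of $\vec u_j$. Matching these consistently against the over-determined GSMP structure --- the same compatibility that had to be checked in \eqref{comp} during the correctness proof of Definition \ref{defo} --- is exactly where sign and indexing errors are easy to make, and I would guard against them by cross-checking the two independent readings of $\vp_j(1)$ (from the off-diagonal block versus the first column of the diagonal block).
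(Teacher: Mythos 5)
Your proposal is correct and takes essentially the same route as the paper's (very terse) proof: the collapse identity $U^*(\vp)\vp=\|\vp\|\delta_0$, the explicit propagation of $\delta_g$ through the Givens factors, and the conjugation-plus-shift bookkeeping on the off-diagonal block $\delta_g\vp^*$ are precisely the two displayed identities the paper combines. The only difference is one of completeness: the paper records the argument only for \eqref{jfex} and leaves \eqref{jfex1}, \eqref{jfex3} and \eqref{jfex2} implicit, whereas you work out all four, including the cross-check of the over-determined GSMP structure.
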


\begin{proof}
According to Definition \ref{defjf}, we have to introduce the angles \eqref{ang21}-\eqref{ang24} and
their product \eqref{eq412}.
In this case
$$
\begin{bmatrix}
p^{(0)}_0&\dots&p^{(0)}_g
\end{bmatrix} U(\vp_{0})=\begin{bmatrix}
1&0&\dots&0
\end{bmatrix}{\sqrt{(p^{(0)}_{0})^2+(p^{(0)}_{1})^2+\dots+(p^{(0)}_{g})^2}},
$$
and
$$
U^*(\vp_{-1})\begin{bmatrix} 0\\
\vdots\\ 0\\ 1
\end{bmatrix}=
\begin{bmatrix} 
\cos\phi^{-1}_1\cos \phi^{-1}_2\dots\cos\phi^{-1}_g\\\
-\sin\phi^{-1}_1\cos \phi^{-1}_2\dots\cos\phi^{-1}_g\\ \hdots\\ -\sin\phi^{-1}_g
\end{bmatrix}.
$$
Since
$$
\begin{bmatrix}
 p_g^{(-1)}(1)\\
p_g^{(-1)}(1) q_0^{(-1)}(1)\\
\vdots\\
 p_g^{(-1)}(1) q_{g-1}^{(-1)}(1)
\end{bmatrix}=U^*(\vp_{-1})A_0U(\vp_{0})
\begin{bmatrix}
1\\0\\ \vdots\\0
\end{bmatrix}
$$
the combination of the two above displayed identities implies \eqref{jfex}.
\end{proof}

\begin{remark}
Let
\begin{equation}\label{remjf}
a(0):=\|\vp_0\|, \quad  b(-1):=\frac{\langle B(\vp_0,\vq_0) \vp_0,\vp_0 \rangle}{\|\vp_0\|^2}.
\end{equation}
We would like to point out that in the Jacobi flow transform \eqref{jfex}-\eqref{jfex2} the coefficients of the blocks 
$A(\vp_0)$, $B(\vp_0,\vq_0)$ are involved only via these two functions of the vectors $\vq_0,\vp_0$. Indeed, in \eqref{jfex1} and \eqref{jfex2} only the  factors $a(0)$ and $b(-1)$ appear.
\end{remark}

\section{KS-functional}

\subsection{From scalar to matrix spectral function}

\begin{theorem}\label{thdensks} Let $A\in\GSMP(\bC)$.
Assume that $\sigma_+(\bc_j)=0,  \forall \bc_j\in\bC$. 
This spectral measure satisfies \eqref{eq5} if and only if the block Jacobi matrix $V(A_+)$ belongs to the Killip-Simon class.
\end{theorem}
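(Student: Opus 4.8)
The plan is to pass from the scalar spectral measure $\sigma_+$ to the matrix spectral measure $d\Sigma$ of the $(g+1)$-block Jacobi matrix $V(A_+)$ and to reduce the assertion to the matrix version of the Killip--Simon theorem \cite{KSDp}. First I would record that the hypothesis $\sigma_+(\bc_j)=0$ for every pole $\bc_j$ of $V$ makes $V(A_+)$ a bounded block Jacobi matrix: by \eqref{vofa} one has $V'(z)=\lambda_0+\sum_{j=1}^g\lambda_j(\bc_j-z)^{-2}\ge\lambda_0>0$ on $\bbR\setminus\bC$, so $V$ is strictly increasing between consecutive poles and maps each of the $g+1$ bands of $E=V^{-1}([-2,2])$ bijectively onto $[-2,2]$. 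Hence $V(A_+)$ carries a matrix spectral measure $d\Sigma$, taken with respect to the first block $\{e_0,\dots,e_g\}$, supported on $[-2,2]\cup Y$ with $Y=V(X_+)$; the inclusion $\operatorname{supp}d\Sigma\subseteq[-2,2]\cup Y$ corresponds under $y=V(x)$ to $\operatorname{supp}\sigma_+\subseteq E\cup X_+$, since $V(E)=[-2,2]$ and $V$ sends the gaps into $\bbR\setminus[-2,2]$. By the matrix Killip--Simon theorem $V(A_+)$ lies in the Killip--Simon class if and only if $d\Sigma$ obeys \eqref{eq12}, so it suffices to prove that \eqref{eq12} for $d\Sigma$ is equivalent to \eqref{eq5} for $\sigma_+$.

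The core is an explicit pushforward identity under the $(g+1)$-to-one map $y=V(x)$. Realizing $A_+$ as multiplication by $x$ on $L^2_{\sigma_+}$ via the cyclic vector $\tilde e_0$, the standard vectors $e_0,\dots,e_g$ become the first $g+1$ orthonormal SMP functions $\phi_0,\dots,\phi_g$, and $V(A_+)$ becomes multiplication by $y$. For $y\in(-2,2)$ let $x_1(y),\dots,x_{g+1}(y)$ be the $g+1$ preimages, one in each band. A direct computation of the absolutely continuous density by the coarea formula gives $\Sigma'(y)=\Phi(y)D(y)\Phi(y)^*$ with $\Phi(y)=[\phi_i(x_l(y))]_{i,l}$ and $D(y)=\Diag\!\big(\sigma'_+(x_l(y))/V'(x_l(y))\big)$, whence
\begin{equation*}
\log\det\Sigma'(y)=\sum_{l=1}^{g+1}\log\sigma'_+(x_l(y))+C(y),\qquad
C(y)=2\log|\det\Phi(y)|-\sum_{l=1}^{g+1}\log V'(x_l(y)).
\end{equation*}
The crucial point is that $C$ is \emph{bounded} on $[-2,2]$: the preimages lie in distinct bands and remain distinct even at $y=\pm2$ (where they fill the band edges), while $\phi_0,\dots,\phi_g$ are fixed Laurent polynomials continuous on $E$ (recall $0\notin E$); since the transition from the Laurent monomials $1,x^{-1},x,\dots$ to the $\phi_i$ is triangular with non-zero diagonal and the preimages are distinct and non-zero, $\det\Phi(y)$ is continuous and non-vanishing on $[-2,2]$, and $\lambda_0\le V'\le\sup_E V'<\infty$ on $E$ keeps $\log V'(x_l)$ bounded as well.

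Granting the boundedness of $C$, the equivalence of the absolutely continuous parts follows from the change of variables $dy=V'(x)\,dx$ together with the edge asymptotics of the weights. On each band $V'$ is bounded above and below, and near a band edge $e$ (with $V(e)=\pm2$ and $V'(e)\ne0$) one has $4-V(x)^2=(2-V(x))(2+V(x))\asymp|x-e|\asymp\dist(x,\bbR\setminus E)$, so $\sqrt{4-y^2}\,dy\asymp\sqrt{\dist(x,\bbR\setminus E)}\,dx$ on $E$. Splitting $\log\det\Sigma'=\sum_l\log\sigma'_+(x_l)+C$ and integrating against $\sqrt{4-y^2}$, the bounded term $C$ contributes a finite quantity, while the change of variables turns $\int_{-2}^2\big|\sum_l\log\sigma'_+(x_l)\big|\sqrt{4-y^2}\,dy$ into a quantity comparable to $\int_E|\log\sigma'_+(x)|\sqrt{\dist(x,\bbR\setminus E)}\,dx$. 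Hence the first term of \eqref{eq12} is finite exactly when the first term of \eqref{eq5} is.

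It remains to match the point masses. Each eigenvalue $x_k\in X_+$ lies in a gap or outside $[\bb_0,\ba_0]$ and produces the eigenvalue $y_k=V(x_k)\in\bbR\setminus[-2,2]$ of $V(A_+)$, with $Y=V(X_+)$ counted with multiplicity. Writing $e$ for the band edge nearest to $x_k$, the mean value theorem and $V'(e)\ne0$ give $|y_k|-2=|V(x_k)-V(e)|\asymp|x_k-e|=\dist(x_k,E)$, so $y_k^2-4=(|y_k|-2)(|y_k|+2)\asymp\dist(x_k,E)$ and thus $\sqrt{y_k^2-4}^{\,3}\asymp\sqrt{\dist(x_k,E)}^{\,3}$, uniformly as $x_k$ approaches the edges (the only possible accumulation points). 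Therefore the discrete sums in \eqref{eq12} and \eqref{eq5} converge together. Combining both parts, $d\Sigma$ satisfies \eqref{eq12} if and only if $\sigma_+$ satisfies \eqref{eq5}, and the matrix Killip--Simon theorem completes the proof. The main technical obstacle is establishing the pushforward identity and the non-vanishing of $\det\Phi(y)$ up to the band edges, which is precisely where the structure of $V$ (simple band edges and $V'>0$) is essential.
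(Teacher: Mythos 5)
Your proposal is correct and follows essentially the same route as the paper: reduce to the matrix Killip--Simon theorem, compute the pushforward matrix density $\Sigma'(y)$ over the $g+1$ preimages of $y=V(x)$, and compare the entropy integrals and eigenvalue sums under the change of variables. The only difference is in the key determinant step: the paper evaluates $\det\Sigma'(y)$ \emph{exactly} as $\prod_{V(x)=y}\sigma'(x)\big/\prod_k\lambda_k$ by combining the Cauchy determinant \eqref{densks1} with the explicit product formula \eqref{densks2} for $\prod_l V'(x_l)$, whereas you settle for showing the correction term $C(y)$ is merely bounded on $[-2,2]$ --- which suffices for the equivalence and rests on the same non-degeneracy of the generalized Vandermonde at distinct preimages.
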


A proof follows from the  lemma given below. Let
$$
V(z)=\sum_{j=1}^g\frac{\lambda_j}{\bc_j-z}
$$
and let $d\sigma$ be a scalar measure with an essential support on $E=V^{-1}([-2,2])$ such that $\sigma(\bc_j)=0$. We define the matrix measure $d\Sigma$ by
$$
\int \frac{d\Sigma(y)}{y-z}:=\int\frac 1 {V(x)-z}W^*(x)d\sigma(x)W(x),
$$
where
$$
 W(x)=
\begin{bmatrix}
\frac{1}{\bc_1-x} &\hdots&\frac{1}{\bc_g-x}
\end{bmatrix}.
$$

In other words, $d\Sigma$ is the matrix measure of the multiplication by $V(x)$ in $L^2_{d\sigma}$
with respect to a suitable cyclic subspace.
 Note that one can normalize this measure by  a triangular (constant) matrix $T$ such that
$$
T^*\int d\Sigma(y) T=I,
$$
that is, to choose an appropriate orthonormal basis in the fixed cyclic subspace.
\begin{lemma}
Let $\Sigma'(y)$ be the density of the a.c. part of the measure $d\Sigma$ on $[-2,2]$ and
$\sigma'(x)$ be the density of $d\sigma$, respectively. Then
\begin{equation}\label{densks}
 \det\Sigma'(y)=\frac{\prod_{V(x)=y}\sigma'(x)}{\prod_{k=1}^g\lambda_k}.
\end{equation}
\end{lemma}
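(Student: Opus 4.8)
The plan is to recover the density $\Sigma'(y)$ explicitly by pushing the scalar measure forward through $V$ and then to reduce its determinant to a Cauchy-matrix computation. First I would apply the Stieltjes inversion formula to the matrix Cauchy transform
$$F(z)=\int\frac{d\Sigma(y')}{y'-z}=\int\frac{W^*(x)W(x)}{V(x)-z}\,d\sigma(x).$$
On the a.c. part $d\sigma=\sigma'(x)\,dx$ supported on $E$ one has, for generic $y\in[-2,2]$, exactly $g$ simple preimages $x_1(y),\dots,x_g(y)$ with $V(x_k)=y$, and the standard limit $\frac{\e}{(V(x)-y)^2+\e^2}\to\pi\sum_k\frac{\delta(x-x_k)}{|V'(x_k)|}$ yields
$$\Sigma'(y)=\frac1\pi\Im F(y+i0)=\sum_{k=1}^g\frac{\sigma'(x_k)}{|V'(x_k)|}\,W^*(x_k)W(x_k).$$
Writing $v_k=W^*(x_k)$ (a real column with entries $1/(\bc_i-x_k)$), this is $\Sigma'(y)=MDM^*$, where $M=\begin{bmatrix}v_1&\cdots&v_g\end{bmatrix}$ is the Cauchy matrix $M_{ik}=1/(\bc_i-x_k)$ and $D=\Diag\big(\sigma'(x_k)/|V'(x_k)|\big)$; hence $\det\Sigma'(y)=(\det M)^2\prod_k\sigma'(x_k)/|V'(x_k)|$.

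The heart of the argument is a single algebraic identity, which I expect to be the cleanest route (avoiding the explicit Cauchy determinant). Set $\Lambda=\Diag(\lambda_1,\dots,\lambda_g)$ and consider $M^*\Lambda M$. Its $(k,k')$ entry is $\sum_j\lambda_j/[(\bc_j-x_k)(\bc_j-x_{k'})]$, and the partial-fraction identity $\frac{1}{(\bc_j-x_k)(\bc_j-x_{k'})}=\frac{1}{x_k-x_{k'}}\big(\frac{1}{\bc_j-x_k}-\frac{1}{\bc_j-x_{k'}}\big)$ collapses this to $\frac{V(x_k)-V(x_{k'})}{x_k-x_{k'}}$. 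Since every preimage satisfies $V(x_k)=y$, the off-diagonal entries vanish and the diagonal entries are exactly $V'(x_k)$; thus
$$M^*\Lambda M=\Diag\big(V'(x_1),\dots,V'(x_g)\big).$$
Taking determinants gives $(\det M)^2\prod_k\lambda_k=\prod_k V'(x_k)$.

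It remains to assemble these pieces and settle the sign bookkeeping. Substituting the last identity into the formula for $\det\Sigma'(y)$ gives
$$\det\Sigma'(y)=\frac{\prod_k V'(x_k)}{\prod_k\lambda_k}\cdot\frac{\prod_k\sigma'(x_k)}{\prod_k|V'(x_k)|}=\sgn\Big(\prod_k V'(x_k)\Big)\frac{\prod_{V(x)=y}\sigma'(x)}{\prod_{k=1}^g\lambda_k}.$$
Because $(\det M)^2\ge0$ and $\prod_k\lambda_k>0$, the product $\prod_k V'(x_k)=(\det M)^2\prod_k\lambda_k$ is nonnegative, so the sign factor is $+1$ for generic $y$ and the claimed formula \eqref{densks} follows. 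The main obstacle is not any single computation but the rigorous justification of the distributional limit in the Stieltjes inversion step — controlling the passage $\frac{\e}{(V-y)^2+\e^2}\to\pi\delta(V-y)$ well enough to extract the a.c. density, together with verifying that $y$ is a regular value of $V$ (distinct preimages $x_k\notin\bC$ with $V'(x_k)\ne0$) for a.e. $y\in[-2,2]$, which is precisely what makes the fiberwise sum and the invertibility of the Cauchy matrix $M$ legitimate.
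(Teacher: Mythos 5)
Your proposal is correct, and its overall skeleton matches the paper's: both write $\Sigma'(y)=\sum_{V(x)=y}W^*(x)\frac{\sigma'(x)}{V'(x)}W(x)=MDM^*$ with the Cauchy matrix $M_{ik}=1/(\bc_i-x_k)$, reduce the claim to the identity $(\det M)^2\prod_k\lambda_k=\prod_kV'(x_k)$, and then divide. Where you differ is in how that identity is established. The paper proves it by brute force: it invokes the classical Cauchy determinant formula for $\det M$ and then computes $V'(x_k)$ and $\lambda_k$ separately from the factorization $y-V(x)=y\prod_j(x-x_j)/\prod_j(x-\bc_j)$, multiplying everything out. You instead prove the single matrix identity $M^*\Lambda M=\Diag(V'(x_1),\dots,V'(x_g))$ via the partial-fraction collapse of $\sum_j\lambda_j/[(\bc_j-x_k)(\bc_j-x_{k'})]$ to the divided difference $(V(x_k)-V(x_{k'}))/(x_k-x_{k'})$, which vanishes off the diagonal precisely because all $x_k$ lie in one fiber of $V$. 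This is cleaner: it avoids the explicit Cauchy determinant, handles the sign automatically (your observation that $\prod_kV'(x_k)=(\det M)^2\prod_k\lambda_k\ge0$ is reinforced by the fact that $V$ is a Pick function, so $V'>0$ on $\bbR$ away from the poles $\bc_j$), and exhibits the positive-definiteness of $\Sigma'(y)$ along the way. The paper, for its part, simply asserts the pushforward formula for $\Sigma'(y)$ rather than deriving it by Stieltjes inversion; your worry about the rigor of the distributional limit is the standard a.e.-$y$ argument and not a genuine obstacle, and the regularity of a.e. $y\in[-2,2]$ (simple, distinct preimages off $\bC$) is automatic since $V'>0$ on each band of $E$.
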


\begin{proof}
Let $\{x_1,\dots,x_g\}=V^{-1}(y)$, $y\in[-2,2]$. Then
\begin{eqnarray*}
\Sigma'(y)&=&\sum_{V(x)=y}W^*(x)\frac{\sigma'(x)}{V'(x)}W(x)\\
&=&
\begin{bmatrix}
\frac{1}{\bc_1-x_1}&\dots&\frac 1{\bc_g-x_1}\\
\vdots&\dots&\vdots\\
\frac{1}{\bc_1-x_g}&\dots&\frac 1{\bc_g-x_g}
\end{bmatrix}^*
\begin{bmatrix}
\frac{\sigma'(x_1)}{V'(x_1)}& & \\
&\ddots& \\
& & \frac{\sigma'(x_g)}{V'(x_g)}
\end{bmatrix}
\begin{bmatrix}
\frac{1}{\bc_1-x_1}&\dots&\frac 1{\bc_g-x_1}\\
\vdots&\dots&\vdots\\
\frac{1}{\bc_1-x_g}&\dots&\frac 1{\bc_g-x_g}
\end{bmatrix}.
\end{eqnarray*}
As it is well known
\begin{equation}\label{densks1}
\det \begin{bmatrix}
\frac{1}{\bc_1-x_1}&\dots&\frac 1{\bc_g-x_1}\\
\vdots&\dots&\vdots\\
\frac{1}{\bc_1-x_g}&\dots&\frac 1{\bc_g-x_g}
\end{bmatrix}
=(-1)^{\frac{g(g-1)}{2}}\frac{\prod_{k<j}(x_k-x_j)\prod_{k<j}(\bc_k-\bc_j)}{\prod_{j,k}(\bc_j-x_k)}.
\end{equation}
On the other hand,
$$
y-V(x)=y\frac{\prod(x-x_j)}{\prod(x-\bc_j)}.
$$
Therefore,
$$
-V'(x_k)=y\frac{\prod_{k\not=j}(x_k-x_j)}{\prod_j(x_k-\bc_j)} \quad\text{and}\quad
-\lambda_k=y\frac{\prod_j(c_k-x_j)}{\prod_{k\not=j}(\bc_k-\bc_j)}.
$$
That is,
$$
V'(x_k)=\lambda_k\frac{\prod_{k\not=j}(x_k-x_j)}{\prod_j(x_k-\bc_j)}\frac{\prod_{k\not=j}(\bc_k-\bc_j)}{\prod_j(\bc_k-x_j)}.
$$
Thus,
\begin{equation}\label{densks2}
\prod V'(x_k)=\frac{\prod_{k<j}(x_k-x_j)^2(\bc_k-\bc_j)^2}{\prod_{k,j}(\bc_k-x_j)^2}\prod_k\lambda_k.
\end{equation}
Combining \eqref{densks1} and \eqref{densks2}, we obtain \eqref{densks}.
\end{proof}

\begin{proof}[Proof of Theorem \ref{thdensks}] Clearly, the eigenvalue spectral condition on $A_+$ corresponds to the eigenvalue spectral condition for $V(A_+)$ of the Killip-Simon class matrices with asymptotically constant matrix-block coefficients. By \eqref{densks}, we get the corresponding condition on the a.c. spectrum of $V(A_+)$.

\end{proof}

\subsection{``Derivative" in the Jacobi flow direction}

 Let us make the block decomposition of $V(A)$ in $(g+1)\times (g+1)$ blocks
 \begin{equation}\label{eq11}
 V(A)=\begin{bmatrix}\ddots&\ddots&\ddots & & &\\
 &\fv^*_{-1}&\fw_{-1}&\fv_0 & & \\
  & &\fv^*_{0}&\fw_{0}&\fv_1 & \\
& &  & \ddots&\ddots\ddots
 \end{bmatrix},
 \end{equation}
 where $\fw_k$ is a self-adjoint matrix and $\fv_k$ is  a lower triangular one, i.e.,
 $$
 \fw_k=\begin{bmatrix} w^{(k)}_{0,0}&\hdots &w^{(k)}_{0,g}\\ 
 \vdots&& \vdots \\
 w^{(k)}_{g,0}& \hdots&w^{(k)}_{g,g} \end{bmatrix},
\ \ 
 \fv_k=\begin{bmatrix} v^{(k)}_{0,0}&0 &0\\ 
 \vdots&\ddots& 0 \\
 v^{(k)}_{g,0}& \hdots&v^{(k)}_{g,g} \end{bmatrix}.
$$
Due to the previous subsection and general results on Jacobi block-matrices of Killip-Simon class \cite{KSDp},
 the spectral condition \eqref{eq5} is equivalent to the boundedness of
the following KS-functional
\begin{equation}\label{eq10}
H_+(A)=\frac 1 2\sum_{j\ge 0}\{ \tr (\fv_j^* \fv_j
+\fw_j^2
+\fv_{j+1} \fv_{j+1}^*)
-2(g+1)-\log\prod_{l=0}^g( v^{(j)}_{l,l}v^{(j+1)}_{l,l})^2\}.
\end{equation}

\begin{lemma}\label{lemjder}
Let
$$
\delta_J H_+(A)=\frac 1 2 \langle V(\cJ A) e_{-1}, V(\cJ A) e_{-1}\rangle-1-\log(\cJ v)_{g,g}^{(-1)}(\cJ v)_{g,g}^{(0)}.
$$
Then
$$
H_+(A)=H_+(\cJ A)+\delta_J H_+(A).
$$
\end{lemma}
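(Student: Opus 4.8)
The plan is to compare $H_+(A)$ and $H_+(\cJ A)$ block by block, exploiting that $V$ commutes with unitary conjugation: by \eqref{jfex0} we have $W':=V(\cJ A)=S^{-1}U_A^*\,W\,U_A\,S$ with $W:=V(A)$ and $U_A$ block diagonal. Since the off-diagonal blocks are lower triangular with $\det\fv_j=\prod_{l=0}^g v^{(j)}_{l,l}>0$, I would split the functional as $H_+(A)=T(A)-R(A)$, where the energy part is $T(A)=\frac12\sum_{j\ge0}\tr(\fv_j^*\fv_j+\fw_j^2+\fv_{j+1}\fv_{j+1}^*)$ and the entropy part collects the remaining terms, $R(A)=\sum_{j\ge0}\{(g+1)+\log\det\fv_j+\log\det\fv_{j+1}\}$. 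The two parts respond to the Jacobi flow by genuinely different mechanisms, and only the entropy part is delicate.

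For the energy part I would first record the identity $T(A)=\frac12\sum_{m\ge0}\langle W^2e_m,e_m\rangle$, obtained by collecting, for each column site $m$ in block $j$, the three nonvanishing block-columns of the banded matrix $W$ (these contribute exactly $\|\fv_j\|^2$, $\tr\fw_j^2$ and $\|\fv_{j+1}\|^2$). Because $(W')^2=S^{-1}U_A^*W^2U_A S$, the same formula gives $T(\cJ A)=\frac12\sum_{m\ge0}\langle W^2U_Ae_{m+1},U_Ae_{m+1}\rangle=\frac12\sum_{m\ge1}\langle W^2U_Ae_m,U_Ae_m\rangle$. Let $P_j$ be the projection onto block $j$ and $P_{[1,g]}$ the projection onto $\mathrm{span}\{e_1,\dots,e_g\}$. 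For every full block $j\ge1$ the block unitary $U_j$ satisfies $U_jP_jU_j^*=P_j$, so $\tr(W^2P_j)$ is unchanged and these contributions cancel against those of $T(A)$; only block $0$ is truncated. Hence $T(A)-T(\cJ A)=\frac12\big(\tr(W^2P_0)-\tr(W^2U_0P_{[1,g]}U_0^*)\big)=\frac12\|WU_0e_0\|^2$, and since $U_0e_0=U_Ae_0=U_ASe_{-1}$ with $S,U_A$ unitary, $\|WU_0e_0\|=\|S^{-1}U_A^*WU_ASe_{-1}\|=\|V(\cJ A)e_{-1}\|$. This produces precisely the first term $\frac12\langle V(\cJ A)e_{-1},V(\cJ A)e_{-1}\rangle$ of $\delta_J H_+(A)$, with no residual constant.

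It then remains to prove $R(A)-R(\cJ A)=1+\log\big((\cJ v)^{(-1)}_{g,g}(\cJ v)^{(0)}_{g,g}\big)$; since the energy part carried no constant, the stray $-1$ in $\delta_J H_+(A)$ must come entirely from here. The crucial observation is that the $v^{(j)}_{l,l}$ are exactly the entries of the outermost $(g+1)$-st diagonal of $W$, so I would rewrite $\log\det\fv_j=\sum_{m\in\text{block }j}\log W_{m-(g+1),m}$ and likewise for $W'$. Using $W'=S^{-1}U_A^*WU_AS$, each outermost entry of $W'$ equals a diagonal entry of a rotated off-diagonal block $U_{i}^*\fv_{i+1}U_{i+1}$, shifted by one position along the block; summing over the half-line and telescoping the differences $\log\det\fv_j-\log\det\fv_j'$ should leave only boundary contributions at blocks $-1$ and $0$. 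These boundary entries are the corner factors $(\cJ v)^{(-1)}_{g,g}$ and $(\cJ v)^{(0)}_{g,g}$, while the normalization ratios produced by the rotations — governed by \eqref{jfex2}, $p^{(j)}_g(1)=\tfrac{\|\vp_{j+1}\|}{\|\vp_j\|}p^{(j)}_g$, whose logarithms telescope through the $\|\vp_j\|$ — are what generate the remaining constant.

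The main obstacle is exactly this entropy part. Whereas the energy computation needs only the total half-line trace $\tr(P_+W^2)$, which is insensitive to the block-diagonal rotation $U_A$, the log-determinants depend on the individual off-diagonal blocks, and a single scalar step of the Jacobi flow mixes adjacent blocks of $U_A^*WU_A$ through the re-triangularization that restores the canonical $\GSMP$ structure. Tracking the diagonal entries of the off-diagonal blocks through this rotation, showing that all interior terms cancel in pairs, and pinning down both the surviving corner factors $(\cJ v)^{(-1)}_{g,g}(\cJ v)^{(0)}_{g,g}$ and the exact value $1$ of the constant, is the demanding step; I expect to complete it with the explicit Jacobi-flow formulas of Theorem~\ref{th54} together with the lower-triangular structure of $\fv_j$.
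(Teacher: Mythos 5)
Your starting point is the paper's: by \eqref{jfex0}, $V(\cJ A)=S^{-1}U_A^*V(A)U_AS$, block-diagonal orthogonal conjugation preserves the relevant block traces, and the difference of the two functionals must collapse to a boundary contribution; your treatment of the quadratic part is essentially the paper's. But the proof is not complete, and the step you defer is exactly where the content of the lemma lies. You never establish the log-determinant identity: what is needed is that over every complete block the product of the $(g+1)$-st off-diagonal entries of $U_A^*V(A)U_A$ equals $\det\fv_j$, which the paper obtains from $\prod_{l}v^{(j)}_{l,l}=\det\fv_j=\det U^*(\vp_{j-1})\fv_jU(\vp_j)$ together with the fact that the off-diagonal blocks of $V(\cJ A)$ are again lower triangular (because $\cJ A$ is GSMP-structured), so that their diagonal products are again determinants. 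Writing ``I expect to complete it'' at precisely this point leaves the lemma unproved.

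Moreover, your bookkeeping for the constant is wrong and would send you looking in the wrong place. Group \eqref{eq10} per column: $H_+(A)=\sum_{m\ge0}\bigl[\tfrac12\|V(A)e_m\|^2-1-\log\bigl(V(A)_{m-(g+1),m}\,V(A)_{m+(g+1),m}\bigr)\bigr]$, and likewise for $\cJ A$ using $V(\cJ A)_{n,m}=(U_A^*V(A)U_A)_{n+1,m+1}$. The sum of each of the three pieces over a complete block is invariant under the conjugation (the traces by unitarity, the log-products by the determinant identity above), so $H_+(A)-H_+(\cJ A)$ reduces to the contribution of the single unmatched column $m=0$ of $U_A^*V(A)U_A$, whose three pieces are exactly the three terms of $\delta_JH_+(A)$. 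In particular the $-1$ is nothing but the counting constant $-2(g+1)$ per block in \eqref{eq10} (i.e.\ $-1$ per column after the overall factor $\tfrac12$) evaluated at that one column; it is not ``generated by the normalization ratios governed by \eqref{jfex2}'', and a hunt for it among the factors $\|\vp_{j+1}\|/\|\vp_j\|$ will not terminate. Finally, your $T$ and $R$ are each divergent series (on the Killip--Simon class the block trace tends to $2(g+1)$, not to $0$), so the split $H_+=T-R$ and the manipulation $T(A)-T(\cJ A)=\tfrac12\|V(A)U_Ae_0\|^2$ are $\infty-\infty$ statements that only become meaningful after you fix the column-by-column pairing just described; with a different truncation the ``energy difference'' picks up a non-vanishing boundary term at infinity.
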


\begin{proof}
We note that the form
$$
H_+(\cJ A)+\delta_J H_+(A)
$$
is related to the $P_+$ part of the matrix $U_A^*V(A)U_A$. Since $U(A)$  is of a block diagonal form, we can use the identities
\begin{alignat*}{3}
\tr\, U^*(\vp_j)\fv_j^*\fv_j U(\vp_j)&=~&\tr\,\fv_j^*\fv_j, \ \ \tr\, U^*(\vp_j)\fw^2_jU(\vp_j)=\tr\, \fw_j^2,\\
\tr\, U^*(\vp_j)\fv_{j+1}\fv_{j+1}^* U(\vp_j)&=&\tr\, \fv_{j+1}\fv_{j+1}.\qquad\qquad\qquad\qquad\qquad
\end{alignat*}
Similarly,
$$
\prod_{l=0}^g v^{(j)}_{l,l}=\det \fv_j=\det U^*(\vp_{j-1})\fv_j U(\vp_{j}).
$$

\end{proof}

\section{Proof of Theorem \ref{th73}}

\begin{lemma}\label{lem51}
Let $A\in\KSA(E,\bC)$ and $A(n+1)=\cJ A(n)$, $A(0)=A$. Then
\begin{equation}\label{cond1}
\{\lambda_0 p_g^{(-1)}{(n)}-1\}_{n\ge 0}\in\l_+^2,
\end{equation}
\begin{equation}\label{cond2}
\{ p_{m}^{(-1)}{(n)}-p_{m}^{(0)}{(n)}\}_{n\ge 0}\in\l_+^2,\
0\le m\le g-1,
\end{equation}
and
\begin{equation}\label{cond3}
\{ q_{m}^{(-1)}{(n)}-q_{m}^{(0)}{(n)}\}_{n\ge 0}\in\l_+^2,\
0\le m\le g.
\end{equation}
\end{lemma}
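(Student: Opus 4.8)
The plan is to read the three conditions off the finiteness of the Killip--Simon functional along the orbit of the Jacobi flow. By Theorem \ref{thdensks} the hypothesis $A\in\KSA(E,\bC)$ is equivalent to $V(A_+)$ being of block Killip--Simon class, hence $H_+(A)<\infty$ for the functional \eqref{eq10}. Each summand of \eqref{eq10} is nonnegative (the matrix Killip--Simon entropy estimate), so $H_+\ge 0$ on the whole orbit. Iterating Lemma \ref{lemjder} along $A(n)=\cJ^{\circ n}A$ gives the telescoping identity
\[
H_+(A)=H_+(A(N))+\sum_{n=0}^{N-1}\delta_J H_+(A(n)),
\]
and, once $\delta_J H_+\ge 0$ is verified, the nonnegativity of $H_+(A(N))$ forces
\[
\sum_{n\ge 0}\delta_J H_+(A(n))\le H_+(A)<\infty.
\]

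The next step is to make the ``sum of squares'' structure of $\delta_J H_+$ explicit. Writing $B=\cJ A$ and using the block form \eqref{eq11}, the vector $V(B)e_{-1}$ has exactly three nonzero block components, produced by $\fv_{-1}\delta_g=v^{(-1)}_{g,g}\delta_g$, by $\fw_{-1}\delta_g$, and by $\fv_0^{*}\delta_g$, whence
\[
\delta_J H_+(A)=\varphi\big(v^{(-1)}_{g,g}\big)+\varphi\big(v^{(0)}_{g,g}\big)+\tfrac12\|\fw_{-1}\delta_g\|^2+\tfrac12\sum_{i=0}^{g-1}\big(v^{(0)}_{g,i}\big)^2,
\]
where $\varphi(x)=\tfrac12(x^2-1)-\log x\ge 0$ and all entries are those of $V(B)$. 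This confirms the positivity used above and shows that the convergent series dominates each individual square. Using the uniform bound \eqref{n5} (preserved along the precompact orbit $\{A(n)\}$) the quantities $v_{g,g}$ stay in a fixed compact subset of $(0,\infty)$, so $\varphi(x)\ge c(x-1)^2$ there, and I obtain that $\{v^{(-1)}_{g,g}(A(n))-1\}$, $\{v^{(0)}_{g,g}(A(n))-1\}$, $\{v^{(0)}_{g,i}(A(n))\}_{i<g}$ and $\{w^{(-1)}_{i,g}(A(n))\}$ all lie in $\l^2_+$.

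It remains to convert these statements about the boundary blocks of $V(A)$ into the coefficient conditions. The corner entry is immediate: since $(\bc_k-A)^{-1}e_{-1}$ is supported in blocks $-1,0$ only (this is exactly the structure used in the computation leading to \eqref{altd2bis}), the resolvent part of $V(A)=\lambda_0A+\bc_0+\sum_k\lambda_k(\bc_k-A)^{-1}$, see \eqref{vofa}, never reaches the $(g,g)$-corner of the block $\fv_{-1}$. Hence $v^{(-1)}_{g,g}=\lambda_0 p^{(-1)}_g$, which gives \eqref{cond1} at once, and likewise $v^{(0)}_{g,g}=\lambda_0 p^{(0)}_g$. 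For the remaining entries I would use the explicit formulas of Section 3 (Lemma \ref{lem:gsmpEntries} together with the computation producing \eqref{altd2bis}) to express the last row of $\fv_0$ and the last column of $\fw_{-1}$ as functions of $\vp_{-1},\vq_{-1},\vp_0,\vq_0$.

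The key structural fact is that on the isospectral set these boundary entries vanish: by the magic formula \eqref{magic}, $V(\oc A)=S^{g+1}+S^{-(g+1)}$, so $\fv_0=I$ and $\fw_{-1}=0$; and there every GSMP matrix is block-Toeplitz, because multiplication by $\Delta$ is precisely the block shift (cf. the proof of Proposition \ref{prop18}), so that simultaneously $p^{(-1)}_m=p^{(0)}_m$ and $q^{(-1)}_m=q^{(0)}_m$. A count shows that the $2g+1$ controlled boundary entries match exactly the $2g+1$ differences in \eqref{cond2}--\eqref{cond3}. The main work, and the principal obstacle, is to verify that the map from these differences to the boundary entries has nonsingular Jacobian, \emph{uniformly} over the precompact orbit $\{A(n)\}$; this uniform nondegeneracy then bounds $|p^{(-1)}_m-p^{(0)}_m|$ and $|q^{(-1)}_m-q^{(0)}_m|$ by the $V$-deviations and yields \eqref{cond2} and \eqref{cond3}. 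The quantitative invertibility of this passage from the boundary blocks of $V(A)$ back to the generating coefficients is exactly where the explicit Blaschke--Potapov factorization of the transfer matrix \eqref{facto3} and the uniform bounds of Section 3 are indispensable.
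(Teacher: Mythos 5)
Your opening is sound and matches the paper's strategy: finiteness of $H_+(A)$, the telescoping identity from Lemma \ref{lemjder}, nonnegativity of each increment, and hence $\sum_n\delta_JH_+(A(n))<\infty$, which places the boundary entries of the blocks $\fv_{-1},\fw_{-1},\fv_0$ of $V(A(n))$ (suitably normalized) in $\l^2_+$. Your derivation of \eqref{cond1} is also correct: since $(\bc_k-A)^{-1}e_{-1}$ is supported in blocks $-1$ and $0$ only, the corner entry is $v^{(-1)}_{g,g}=\lambda_0p^{(-1)}_g$, and the entropy term $\tfrac12(x^2-1)-\log x\ge c(x-1)^2$ does the rest.

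The gap is in the passage to \eqref{cond2}--\eqref{cond3}. You reduce everything to the claim that the map from the differences $p^{(-1)}_m-p^{(0)}_m$, $q^{(-1)}_m-q^{(0)}_m$ to the boundary entries of $V(A)$ has a Jacobian that is nonsingular \emph{uniformly} along the orbit, and you explicitly leave this unverified. This is not a technicality one can wave at: the only coefficient bounded away from zero along the orbit is $p^{(j)}_g$ (condition \eqref{n5}); the individual components $p^{(j)}_m$, $m<g$, may approach zero, and in the explicit formulas of Lemma \ref{lem:gsmpEntries} the coefficient differences enter the $V$-entries multiplied by precisely such quantities. The paper itself runs into this degeneration one step later (in the proof of Theorem \ref{th73}): from $\{(\Lambda^\#_{-1,g}(n)-\lambda_g)p^{(-1)}_{g-1}(n)\}\in\l^2_+$ one \emph{cannot} conclude $\{\Lambda^\#_{-1,g}(n)-\lambda_g\}\in\l^2_+$ because $p^{(-1)}_{g-1}(n)$ may tend to zero, and a second, independent estimate involving $q^{(-1)}_{g-1}(n)$ must be produced. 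So the uniform nondegeneracy you need is false in the naive form. The paper circumvents this entirely: it applies the same KS-functional argument not only to $A$ but to the rotated matrix $\tilde A=\cO A$ (which lies in the Killip--Simon class for the permuted ordering, since $\cO$ is a unitary conjugation composed with a shift and preserves the spectral data, and commutes with the flow by \eqref{shiftjflow}); the identity \eqref{eqar} then couples the two families of controlled $V$-entries through the rotation angles $\phi_g^{-1}(n),\phi_g^0(n)$, and the elementary Lemma \ref{lem52} on pairs of rotation matrices extracts $\{\sin\phi^{-1}_g(n)-\sin\phi^0_g(n)\}\in\l^2_+$ using only the lower bound $\cos\phi\ge\eta$, which is guaranteed by \eqref{n5}. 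The relations \eqref{def3}, \eqref{def1}, \eqref{def4} then propagate this to all $m$. To repair your argument you would either have to reproduce this rotation mechanism or find a genuinely different way to combine the $p$- and $q$-information so that only the nondegenerate quantities $p_g$ and $\|\vp\|$ appear in denominators.
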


First we prove the following sublemma.
\begin{lemma}\label{lem52}
Assume that for sequences $\psi_n$ and $\tilde \psi_n$ there are sequences $\tau_n$ and $\tilde \tau_n$ such that
\begin{equation}\label{eqsubl}
\begin{bmatrix}
\tau_{n}&0\\
0
&1
\end{bmatrix}
\begin{bmatrix}
\sin\psi_{n}&\cos\psi_{n}\\
\cos\psi_{n}
&-\sin\psi_{n}
\end{bmatrix}-
\begin{bmatrix}
\sin\tilde\psi_{n}&\cos\tilde\psi_{n}\\
\cos\tilde\psi_{n}
&-\sin\tilde \psi_{n}
\end{bmatrix}\begin{bmatrix}
1&0\\0
&\tilde\tau_{n}
\end{bmatrix}\in \l^2_+,
\end{equation}
that is, all entries of the above matrix form $\l^2_+$-sequences. Assume in addition that there is $\eta>0$ such that
for all $n$
\begin{equation}\label{eqsubl1}
\cos\psi_n\ge\eta, \ \cos\tilde \psi_n\ge\eta, \ \frac 1\eta\ge \tau_n\ge\eta, \ \frac 1 \eta\ge\tilde\tau_n\ge\eta.
\end{equation}
Then $\{e^{i\psi_n}-e^{i\tilde\psi_n}\}_{n\ge 0}\in\l^2_+$.
\end{lemma}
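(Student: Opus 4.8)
The plan is to split the target into real and imaginary parts, $e^{i\psi_n}-e^{i\tilde\psi_n}=(\cos\psi_n-\cos\tilde\psi_n)+i(\sin\psi_n-\sin\tilde\psi_n)$, and to show each is in $\l^2_+$. First I would write the matrix identity \eqref{eqsubl} out entry by entry. Abbreviating $c_n=\cos\psi_n$, $s_n=\sin\psi_n$, $\tilde c_n=\cos\tilde\psi_n$, $\tilde s_n=\sin\tilde\psi_n$, the four entries assert that
$$\tau_n s_n-\tilde s_n,\quad \tau_n c_n-\tilde\tau_n\tilde c_n,\quad c_n-\tilde c_n,\quad s_n-\tilde\tau_n\tilde s_n$$
all lie in $\l^2_+$. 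The $(2,1)$-entry already delivers the real part $\cos\psi_n-\cos\tilde\psi_n\in\l^2_+$ for free, so the whole problem reduces to the imaginary part $s_n-\tilde s_n$.

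The difficulty is that the sine-type entries come ``twisted'' by the scaling factors $\tau_n,\tilde\tau_n$, which a priori need not tend to $1$; hence one cannot simply read off $s_n-\tilde s_n$ from them. I would untwist this in three moves, using throughout the bounds \eqref{eqsubl1} (so $\tau_n,\tilde\tau_n\in[\eta,1/\eta]$ and $c_n\ge\eta$) together with the elementary fact that a bounded sequence times an $\l^2_+$-sequence is again in $\l^2_+$. First, from the two cosine-type entries I would substitute $\tilde c_n=c_n-(c_n-\tilde c_n)$ to obtain $(\tau_n-\tilde\tau_n)c_n\in\l^2_+$, and then divide by $c_n\ge\eta$ to conclude $\tau_n-\tilde\tau_n\in\l^2_+$. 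Second, from the two sine-type entries I would eliminate $\tilde s_n$ by inserting $\tilde s_n=\tau_n s_n$ (modulo $\l^2_+$) into $s_n=\tilde\tau_n\tilde s_n$ (modulo $\l^2_+$), which yields $s_n(1-\tau_n\tilde\tau_n)\in\l^2_+$.

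The key algebraic trick is the third move: writing $\tau_n\tilde\tau_n=\tau_n^2-\tau_n(\tau_n-\tilde\tau_n)$ and invoking the first move, one upgrades $s_n(1-\tau_n\tilde\tau_n)$ to $s_n(1-\tau_n^2)\in\l^2_+$. Since $1-\tau_n^2=(1-\tau_n)(1+\tau_n)$ with $1+\tau_n\ge 1+\eta$ bounded below, dividing off this harmless factor gives $s_n(1-\tau_n)\in\l^2_+$. Finally the $(1,1)$-entry supplies $s_n-\tilde s_n=s_n(1-\tau_n)+(\tau_n s_n-\tilde s_n)\in\l^2_+$, which is the sought imaginary part, and combining with the real part completes the proof.

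I expect the main obstacle to be precisely the handling of the unknown factors $\tau_n,\tilde\tau_n$: the entire point of the first and third moves is that one never needs $\tau_n\to 1$, only the $\l^2_+$-closeness $\tau_n-\tilde\tau_n\in\l^2_+$ plus the factorization $1-\tau_n^2=(1-\tau_n)(1+\tau_n)$ and the lower bound $1+\tau_n\ge 1+\eta$. It is worth noting that the Pythagorean identities $s_n^2+c_n^2=1$ are never used, so the argument is purely formal once the entries of \eqref{eqsubl} are displayed.
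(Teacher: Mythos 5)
Your proof is correct and follows essentially the same route as the paper's: read off the four entries, deduce $\tau_n-\tilde\tau_n\in\l^2_+$ from the cosine entries, combine the sine entries to get $\sin\psi_n(1-\tau_n\tilde\tau_n)\in\l^2_+$, upgrade this to $(1-\tau_n)\sin\psi_n\in\l^2_+$ via $\tau_n-\tilde\tau_n\in\l^2_+$ and the factorization $1-\tau_n^2=(1-\tau_n)(1+\tau_n)$, and conclude from the $(1,1)$-entry. The only difference is that you make explicit a few boundedness details (e.g.\ $1+\tau_n\ge 1+\eta$) that the paper leaves implicit.
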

\begin{proof}
Directly from \eqref{eqsubl} we have 
$$
\{\cos\psi_n-\cos\tilde\psi_n\}_{n\ge 0}\in\l^2_+\ \text{and}\  
\{\tau_n\cos\psi_n-\tilde \tau_n\cos\tilde\psi_n\}_{n\ge 0}\in\l^2_+.
$$
Then \eqref{eqsubl1} implies $\{\tau_n-\tilde\tau_n\}_{n\ge 0}\in\l^2_+$.
Now, we have another two conditions
$$
\{\tau_n\sin\psi_n-\sin\tilde\psi_n\}_{n\ge 0}\in\l^2_+\ \text{and}\  
\{\sin\psi_n-\tilde \tau_n\sin\tilde\psi_n\}_{n\ge 0}\in\l^2_+.
$$
Therefore,
$$
\sin{\psi_n}-\tau_n\tilde \tau_n\sin\psi_n-\tilde\tau_n(\sin\tilde\psi_n-\tau_n\sin\psi_n)
$$
belongs to $\l^2_+$, that is, $\{\sin{\psi_n}(1-\tau_n\tilde \tau_n)\}_{n\ge0}\in\l^2_+$. Thus, $(\tau_n^2-1)\sin\psi_n$ forms an $\l^2_+$-sequence, as well as $(\tau_n-1)\sin\psi_n$. Finally, since 
$$
\sin\psi_n-\sin\tilde\psi_n=\tau_n\sin\psi_n-\sin\tilde\psi_n-(\tau_n-1)\sin\psi_n,
$$
both $\{\sin\psi_n-\sin\tilde\psi_n\}_{n\ge 0}$ and $\{\cos\psi_n-\cos\tilde\psi_n\}_{n\ge 0}$ are $\l^2_+$-sequences.
\end{proof}

\begin{proof}[Proof of Lemma \ref{lem51}]
The relations \eqref{cond1} follows immediately from Lemma \ref{lemjder}. 

Let $\tilde A=\cO(A)$. We use tilde for all entries related to $\tilde A$ and $V(\tilde A)$, respectively. The entries of $A(n)$ we denote by $\{p_j^{(k)}(n),q_j^{(k)}(n) \}$ and we use a similar notation for the entries of $V(A(n))$ and $V(\tilde A(n))$. Due to Definition \ref{defo},
\begin{eqnarray}\label{eqar}\nonumber
\begin{bmatrix}
v^{(0)}_{g-1,g-1}(n)& 0\\
v^{(0)}_{g,g-1}(n)& \lambda_0 p^{(0)}_g(n)
\end{bmatrix}\begin{bmatrix}
\sin\phi_{g}^0(n)&\cos\phi_{g}^0(n)\\
\cos\phi_{g}^0(n)
&-\sin\phi_{g}^0(n)
\end{bmatrix}
\\
=
\begin{bmatrix}
\sin\phi_{g}^{-1}(n)&\cos\phi_{g}^{-1}(n)\\
\cos\phi_{g}^{-1}(n)
&-\sin\phi_{g}^{-1}(n)
\end{bmatrix}
\begin{bmatrix}
\lambda_0\tilde p^{(0)}_g(n)
& 0\\
\tilde w^{(0)}_{0,g}(n)& \tilde v^{(1)}_{0,0}(n)
\end{bmatrix}.
\end{eqnarray}
Applying  Lemma \ref{lemjder} to the matrix $A$, we obtain
$$
\{\lambda_0 p^{(0)}_g(n)-1\}_{n\ge 0}\in \l^2_+,\quad \{v^{(0)}_{g,g-1}(n)\}_{n\ge 0}\in \l^2_+.
$$
Similarly for the entries related to $\tilde A$ we have
$$
\{\lambda_0 \tilde p^{(0)}_g(n)-1\}_{n\ge 0}\in \l^2_+,\quad  \{\tilde w^{(0)}_{0,g}(n)\}_{n\ge 0}\in \l^2_+.
$$
 Thus, we can apply Lemma \ref{lem52} with respect to \eqref{eqar}. We get $\{\sin\phi_{-1}(n)-\sin\phi_0(n)\}$ belongs to $\l^2_+$. That is, $\{p^{(-1)}_{g-1}(n)-p^{(0)}_{g-1}(n)\}_{n\ge 0}\in\l^2_+$.
 
 Using \eqref{def3}, we get similar relations for all others $j$'s, $0\le j\le g-2$, i.e.: $\{p^{(-1)}_{j}(n)-p^{(0)}_{j}(n)\}_{n\ge 0}\in\l^2_+$.
 
 Using \eqref{def1} and \eqref{def4}, we prove \eqref{cond3}.
\end{proof}

%\begin{theorem}
%Let $A$ be a GSMP matrix and let $\{p^k_j(n),q^k_j(n)\}_{0\le j\le g,\,k\ge -1}$ denote the coefficients related to $\cJ^{\circ n} A$. $A$ belongs to the Killip-Simon class if and only if the following conditions
%\begin{equation}\label{cond4}
%\{\lambda_{-1,m}(n)-\rho_m\}_{n\ge 0}\in\ell^2_+, \quad 1\le m\le g,
%\end{equation}
%hold
%in addition to the relations \eqref{cond1}-\eqref{cond3}.
%\end{theorem}
%
\begin{proof}[Proof of Theorem \ref{th73}]
Lemma \ref{lem52} implies that 
$$
(v^{(-1)}_{g-1,g-1}(n)-1)\sin\phi^{-1}_g(n)
$$
form an $\l^2_+$-sequence, or, equivalently,
\begin{equation}\label{lambdal2}
\{(\Lambda^\#_{-1,g}(n)-\lambda_g) p^{(-1)}_{g-1}(n)\}_{n\ge 0}\in \l^2_+.
\end{equation}
Since $p^{(-1)}_{g-1}(n)$ may approach to zero, it does not imply yet that $\{\Lambda^\#_{-1,g}(n)-\lambda_g\}$ belongs to $\l ^2_+$. 
Let us show that
\begin{equation}\label{lambdal21}
\{(\Lambda^\#_{-1,g}(n)-\lambda_g) q^{(-1)}_{g-1}(n)\}_{n\ge 0}\in \l^2_+.
\end{equation}
Since $\inf_{n}\left((q^{(-1)}_{g-1}(n))^2+(p^{(-1)}_{g-1}(n))^2\right)>0$, both \eqref{lambdal2} and \eqref{lambdal21} give us
\eqref{m31} for $m=g$.

To this end, we note that 
\begin{equation}\label{lambdal22}
\Lambda^\#_{-1,g}(n+1)=\frac{\cos\phi^{-1}_g(n)}{\cos\phi^{-2}_g(n)}\Lambda^\#_{-1,g}(n).
\end{equation}
 Indeed, by definition
of the Jacobi flow
$$
U_{-2}\begin{bmatrix} v^{(-2)}_{g,g}(n+1)& & & \\
\ddots&v^{(-1)}_{0,0}(n+1) & &\\
 \ddots&\ddots & \ddots &\\
 \ddots&\ddots &\ddots &v^{(-1)}_{g-1,g-1}(n+1)
\end{bmatrix}=\fv_{-1}(n) U_{-1}
$$
the second from below entry in the last column in this matrix identity means exactly \eqref{lambdal22}.

Therefore, by Lemma \ref{lem52}, we get 
\begin{equation}\label{lambdal23}
\{\Lambda^\#_{-1,g}(n+1)-\Lambda^\#_{-1,g}(n)\}_{n\ge 0}\in \l^2_+. 
\end{equation}
Now,
by \eqref{jfex}
\begin{align*}
(\Lambda^\#_{-1,g}(n)-\lambda_g) p^{(-1)}_{g-1}(n)&=
-(\Lambda^\#_{-1,g}(n)-\lambda_g) q^{(-1)}_{g-1}(n+1)\\
&\times\frac{p_g^{(-1)}(n)\sqrt{\left(p_{g-1}^{(-1)}(n)\right)^2+\left(p_g^{(-1)}(n)\right)^2}}{
\|\vp_{-1}(n)\|}.
\end{align*}
In combination with \eqref{lambdal23} we have  \eqref{lambdal21}, and therefore \eqref{m31} for $m=g$.

The same arguments with respect to $\cO^m A$, $m=1,..,g-1$, give \eqref{m31} for all other $m$.

To show the opposite direction, we evaluate the entries of $V(A)e_{-1}$. Due to Lemma \ref{lem:gsmpEntries} and Lemma \ref{altda2}, we have
\begin{align*}
(h_{-1}(\bc_{k}))_{l}=&~\frac{(f_{-1})_l}{(f_{-1})_{k-1}}(h_{-1})_{k-1}\\
=\frac{1}{\bc_{l+1}-\bc_k}&\begin{bmatrix}
q^{(-1)}_{k-1}&-p^{(-1)}_{k-1}
\end{bmatrix}
\prod_{j=k}^{l-1}\fa(\bc_k,\bc_{j+1},p_j^{(-1)},q_j^{(-1)})
\begin{bmatrix}
p_l^{(-1)}\\
q_l^{(-1)}
\end{bmatrix}
\frac{\tilde\rho^{(0)}_{k-1}}{\Lambda^\#_{-1,k}},
\end{align*}
for $k<l+1$ and
\begin{align*}
 (h_{-1}(\bc_{l+1}))_{l}=-\frac{\tilde\rho^{(0)}_{k-1}}{\Lambda^\#_{-1,k}}.\qquad\qquad\qquad\qquad\qquad\qquad\qquad\qquad\qquad\qquad~
\end{align*}
Thus, 
\begin{align*}
w_{l,g}^{(-1)}&=\lambda_0p_g^{(-1)}q_l^{(-1)}+\sum_{k=1}^{l+1}\lambda_k(h_{-1}(\bc_{k}))_{l}\\
&=\lambda_0p_g^{(-1)}q_l^{(-1)}+\sum_{k=1}^{l}\Bigg(\frac{\lambda_k}{\Lambda^\#_{-1,k}}
\begin{bmatrix}
0 & 1
\end{bmatrix}
\prod_{j=0}^{k-2}\fa(\bc_k,\bc_{j+1};p^{(0)}_j,q_j^{(0)})\begin{bmatrix}p_{k-1}^{(0)}\\
q_{k-1}^{(0)}
\end{bmatrix}\\
&\times 
\prod_{j=k}^{l-1}\fa(\bc_k,\bc_{j+1},p_j^{(-1)},q_j^{(-1)})
\begin{bmatrix}
p_l^{(-1)}\\
q_l^{(-1)}
\end{bmatrix}
\frac{1}{\bc_{l+1}-\bc_k}\begin{bmatrix}
p^{(-1)}_{k-1}&q^{(-1)}_{k-1}
\end{bmatrix}\fj\Bigg)\\
&-\begin{bmatrix}
0 & 1
\end{bmatrix}
\prod_{j=0}^{l-1}\fa(\bc_{l+1},\bc_{j+1};p^{(0)}_j,q_j^{(0)})\begin{bmatrix}p_{l}^{(0)}\\
q_{l}^{(0)}
\end{bmatrix}.
\end{align*}
Using \eqref{m29}, \eqref{m30} and \eqref{m31}, we get $w_{l,g}^{(-1)}(n)\in\l^2_+$ by evaluating the identity 
\begin{align*}
\prod_{j=0}^{l-1}\fa(z,\bc_{j+1};p_j,q_j)=\sum_{k=1}^{l}\frac{\Res_{\bc_k} \prod_{j=0}^{l-1}\fa(z,\bc_{j+1};p_j,q_j)}{z-\bc_k}+I
\end{align*}
at the point $\bc_{l+1}$. $w^{(-1)}_{-1,g}(n)\in\l^2_+$ follows by \eqref{altd2bis} and the same considerations as in the proof of \eqref{explpqg} and Lemma \ref{lem:c0formula}. Similarly, one can show that $v_{g,g}^{(-1)}(n)$ and $v_{g,l}^{(0)}(n)$, for $0\leq l\leq g$, form $\l^2_+$ sequences.
\end{proof}

\section{Proof of the main Theorem \ref{mainhy}}
 
 \subsection{From GSMP to Jacobi}

Assume that $A\in\GSMP(\bC)$. Let $A(n)=\cJ^{\circ n} A$. Recall that the coefficients of the Jacobi matrix $J=\cF A$ are
given by \eqref{coefflow} and $\l^2$ properties of the coefficients $\{\vp_{-1}(n),\vq_{-1}(n),\vp_0(n),\vq_0(n)\}$ are given in Theorem \ref{th73}. We consider the isospectral surface $\is$ given by
$$
p_g=\frac 1{\lambda_0}, \quad q_g=-\bc_0-\lambda_0\sum_{j=1}^{g-1} p_jq_j, \quad \Lambda_k(\vp,\vq)=\lambda_k, \ k=1,...,g,
$$
with the identification $(p_j,q_j)\equiv(-p_j,-q_j)$, $j=0,\dots,g-1$.
Note that this is a $g$ dimensional torus, which we can parametrize by $\alpha\in\bbR^g/\bbZ^g	$ according to 
Theorem \ref{thm:multbyzsmp}. Moreover, for the given manifold
\begin{equation}\label{grad}
0<\inf_{\{\vp,\vq\}\in\is} \|T(\vp,\vq)\|\le \sup_{\{\vp,\vq\}\in\is} \|T(\vp,\vq)\|<\infty,
\end{equation}
where 
\begin{equation}\label{gradw}
T(\vp,\vq)=\begin{bmatrix}
\frac{\partial\Lambda_1}{\partial p_0}&\hdots &\frac{\partial\Lambda_1}{\partial q_{g-1}}\\
\vdots&\hdots&\vdots\\
\frac{\partial\Lambda_g}{\partial p_0}& &\frac{\partial\Lambda_g}{\partial q_{g-1}}
\end{bmatrix}.
\end{equation}

We define a periodic GSMP matrix $A(\alpha_n)$ generated by $\{\oc\vp(\alpha_n),\oc\vq(\alpha_n)\}\in \is$ such that
\begin{equation}\label{estmain1}
\dist(\{\vp_0(n),\vq_0(n)\},\is)=\dist(\{\vp_0(n),\vq_0(n)\},\{\oc\vp(\alpha_n),\oc\vq(\alpha_n)\}).
\end{equation}
By \eqref{m30}, \eqref{m31} and \eqref{grad}, we have
\begin{equation}\label{estmain}
\sum_{n=0}^\infty\dist^2(\{\vp_0(n),\vq_0(n)\},\{\oc\vp(\alpha_n),\oc\vq(\alpha_n)\})<\infty
\end{equation}
and also
\begin{equation}\label{estmain2}
a(n)^2-\cA(\alpha_n)\in\l^2,\quad b(n)-\cB(\alpha_n)\in\l^2.
\end{equation}

On the other hand, by \eqref{m29}-\eqref{m31} and the uniform smoothness of the Jacobi flow transform
\begin{eqnarray*}
\dist(\{\vp_0(n+1),\vq_0(n+1)\},\{\oc\vp(\alpha_n-\mu),\oc\vq(\alpha_n-\mu)\})
\\
\le C(E,J)
\dist(\{\vp_0(n),\vq_0(n)\},\{\oc\vp(\alpha_n),\oc\vq(\alpha_n)\})
\end{eqnarray*}
That is,
\begin{eqnarray*}
& &\dist(\{\oc\vp(\alpha_{n+1}),\oc \vq(\alpha_{n+1})\},\{\oc\vp(\alpha_n-\mu),\oc\vq(\alpha_n-\mu)\})
\\
&\le& C(E,J)
\dist(\{\vp_0(n),\vq_0(n)\},\{\oc\vp(\alpha_{n}),\oc\vq(\alpha_{n})\})
\\
&+&\dist(\{\vp_0(n+1),\vq_0(n+1)\},\{\oc\vp(\alpha_{n+1}),\oc\vq(\alpha_{n+1})\}).
\end{eqnarray*}
Since
$$
\|\alpha-\beta\|\le C_1(E)
\dist(\{\oc\vp(\alpha),\oc \vq(\alpha)\},\{\oc\vp(\beta),\oc\vq(\beta)\}),
$$
\eqref{estmain} implies
$$
\sum_{n=0}^\infty\|\epsilon_\alpha(n)\|^2<\infty, \quad \text{where}\quad \epsilon_\alpha(n):=\alpha_{n+1}-(\alpha_n-\mu).
$$
In combination with \eqref{estmain2}, we obtain \eqref{132} and \eqref{133}.

\begin{remark}\label{rem71}
Of course in this proof it is not necessary to choose $\alpha_n$ as the best approximation to $\{\vp(n),\vq(n)\}$,
see \eqref{estmain1}. It is enough to have the distance under an appropriate control. This explains a certain ambiguity in the representation \eqref{132}-\eqref{133}. 
\end{remark}

\subsection{From Jacobi to GSMP}\label{subs72}

In this section our goal is to estimate
$p_j(n)-\oc p_j(\alpha_n)$ and $q_j(n)-\oc q_j(\alpha_n)$
by means of the related distances $\dist((S^{-n}JS^n)_+, J(E))<\infty$.
In fact, we prove the following lemma. Note that \eqref{distalpn} evidently implies a word-by-word counterpart of \eqref{opp} in DKST, see Remark \ref{rem7}.
%\begin{equation}\label{distks}
%\sum_{n\ge 0}\dist^2((S_+^*)^nJ_+ S_+^n, J(E))<\infty.
%\end{equation}

\begin{lemma}\label{lem72}
Let $J$ be  of the form \eqref{132}-\eqref{133}, then
\begin{equation}\label{distalpn}
\sum_{n=0}^\infty \dist^2_\eta((S^{-n}JS^n)_+,J(\alpha_n)_+)<\infty, \quad 
\alpha_n=\sum_{k=0}^n\epsilon_{\alpha}(k)-\mu n.
\end{equation}
\end{lemma}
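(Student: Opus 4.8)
The plan is to prove \eqref{distalpn} by comparing the two half-line Jacobi matrices coefficient by coefficient and showing the weighted $\l^2$-distance is summable in $n$. First I would identify the two coefficient sequences. The shifted matrix $S^{-n}JS^n$ has coefficients $\{a(n+m),b(n+m)\}$, so $(S^{-n}JS^n)_+$ is the half-line Jacobi matrix with off-diagonal entries $a(n+m)$ ($m\ge 1$) and diagonal entries $b(n+m)$ ($m\ge 0$); by \eqref{eq} the matrix $J(\alpha_n)_+$ has entries $\sqrt{\cA(\alpha_n-\mu m)}$ and $\cB(\alpha_n-\mu m)$. The crucial algebraic observation is that, setting $\delta_{n,m}:=\sum_{k=n+1}^{n+m}\epsilon_\alpha(k)$, the character appearing in \eqref{132}--\eqref{133} at index $n+m$ satisfies
\[
\sum_{k=0}^{n+m}\epsilon_\alpha(k)-\mu(n+m)=(\alpha_n-\mu m)+\delta_{n,m}.
\]
Hence the two sequences are evaluations of the \emph{same} functions $\cA,\cB$ at nearby points differing by $\delta_{n,m}$, plus the perturbations $\epsilon_a(n+m),\epsilon_b(n+m)$.

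Next I would use that $\cA$ and $\cB$ are real-analytic and $\bbZ^g$-periodic, hence Lipschitz on the torus, and that $\inf_n a(n)>0$ (since $\cA$ is bounded below on $J(E)$ by a positive constant and $\epsilon_a(n)\to 0$). This yields $|b(n+m)-\cB(\alpha_n-\mu m)|\le L\|\delta_{n,m}\|+|\epsilon_b(n+m)|$, and, after dividing the difference of squares by $a(n+m)+\sqrt{\cA(\alpha_n-\mu m)}\ge c>0$, the analogous bound $|a(n+m)-\sqrt{\cA(\alpha_n-\mu m)}|\le C(L\|\delta_{n,m}\|+|\epsilon_a(n+m)|)$. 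Squaring, weighting by $\eta^{2m}$ and summing over $m$, I would obtain the per-$n$ estimate
\[
\dist_\eta^2((S^{-n}JS^n)_+,J(\alpha_n)_+)\le C'\sum_{m\ge 0}\eta^{2m}\bigl(\|\delta_{n,m}\|^2+|\epsilon_a(n+m)|^2+|\epsilon_b(n+m)|^2\bigr).
\]

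Finally I would sum over $n$. For the $\epsilon_a,\epsilon_b$ contributions, interchanging the summation order gives $\sum_n\sum_m\eta^{2m}|\epsilon_a(n+m)|^2\le(1-\eta^2)^{-1}\|\epsilon_a\|_{\l^2_+}^2<\infty$, and likewise for $\epsilon_b$. The genuinely delicate term is $\sum_n\sum_m\eta^{2m}\|\delta_{n,m}\|^2$, and this is the main obstacle: the accumulated sums $\delta_{n,m}$ are \emph{not} individually small. Here I would apply Cauchy--Schwarz in the form $\|\delta_{n,m}\|^2\le\bigl(\sum_{j=1}^m\|\epsilon_\alpha(n+j)\|\bigr)^2\le m\sum_{j=1}^m\|\epsilon_\alpha(n+j)\|^2$, interchange the $n$- and $j$-sums to get $\sum_{n\ge 0}\sum_{j=1}^m\|\epsilon_\alpha(n+j)\|^2\le m\,\|\epsilon_\alpha\|_{\l^2_+}^2$, and conclude
\[
\sum_{n\ge 0}\sum_{m\ge 1}\eta^{2m}\|\delta_{n,m}\|^2\le\|\epsilon_\alpha\|_{\l^2_+}^2\sum_{m\ge 1}m^2\eta^{2m}<\infty.
\]
The point is that the geometric weight $\eta^{2m}<1$ dominates the polynomial factor $m^2$ produced by the Cauchy--Schwarz estimate; this is precisely why the distance \eqref{eq3} is measured in the compact-open ($\eta$-weighted) topology rather than in $\l^2_+$, and it is exactly the mechanism behind Remark \ref{rem7}. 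Combining the three bounds proves \eqref{distalpn}.
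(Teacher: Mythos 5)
Your proposal is correct and follows essentially the same route as the paper: a coefficientwise Lipschitz bound for $\cA,\cB$ reducing everything to $\epsilon_a,\epsilon_b$ and the accumulated sums $\sum_{j=n+1}^{n+k}\epsilon_\alpha(j)$, followed by Cauchy--Schwarz and an interchange of summation so that the geometric weight $\eta^{2k}$ absorbs the resulting factor $k^2$. The only (welcome) extra detail is your explicit passage from $|a^2-\cA|$ to $|a-\sqrt{\cA}|$ via the uniform lower bound on $a(n)$, which the paper leaves implicit in its ``similar estimation'' remark.
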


\begin{proof}[Proof of Lemma \ref{lem72}]
We have
$$
|b(k+n)-\cB(\alpha_n-\mu k)|\le |\e_b(k+n)|+C_1(E)
%\left
\|\sum_{j=n+1}^{n+k}\epsilon_\alpha(j)
%\right
\|,
$$
where 
$$
C_1(E)=\sup_{\alpha\in\bbR^g/\bbZ^g}\|\text{\rm grad}\, \cB(\alpha)\|.
$$
For $\eta<1$, we have
$$
\sum_{n\ge 0} \left(\sum_{k\ge 1}
%\left
\|\sum_{j=n+1}^{n+k}\epsilon_\alpha(j)
%\right
\|^2\eta^{2k}\right)\le
\sum_{n\ge 0} \sum_{k\ge 1}\sum_{j=n+1}^{n+k}\|\epsilon_\alpha(j)\|^2k\eta^{2k}
$$
$$
=\sum_{k\ge 1} \sum_{n\ge 0}\sum_{j=n+1}^{n+k}\|\epsilon_\alpha(j)\|^2k\eta^{2k}\le
\sum_{k\ge 1} k\eta^{2k}\sum^\infty_{j\ge 1}k \|\epsilon_\alpha(j)\|^2
$$
$$
\le 
\sum^\infty_{j\ge 1}\|\epsilon_\alpha(j)\|^2
\sum_{k\ge 1} k^2\eta^{2k}.
$$
Making a similar estimation for $|a(k+n)^2-\cA(\alpha_n-\mu k)|$ we obtain \eqref{distalpn}. 
\end{proof}

Now, let $J=\cF A$, that is, $J=F^*AF$, where $F:\l^2\to\l^2$ is the unitary map such that  
$Fe_{-1}=e_{-1}$ and  $FP_+=P_+ F$, in particular, $Fe_0=\tilde e_0=\frac{1}{a(0)}P_+Ae_{-1}$.
We note that 
\begin{equation}\label{aort}
\{h=(A-\bc_1) f:\ f\in \l^2_+, \ \langle f,\tilde e_0 \rangle=0\}=\{h\in\l^2_+, \langle h, e_0 \rangle=0\}.
\end{equation}
Thus, $F^* e_0$ can be described by means of an orthogonal complement in the following construction.

Let $\bc\not\in \sigma(J)$ and $\zeta_\bc\in\bbD$ such that $\fz(\zeta_\bc)=\bc$. We assume that $\bc$ is real. We define
\begin{equation}\label{jort}
\l^2_{+,\bc}:=\{h=(J-\bc) f:\ f\in \l^2_+, \ \langle f,e_0 \rangle=0\}.
\end{equation}
Recall that
$r_+(z)=\langle (J_+-z)^{-1}e_0,e_0 \rangle$. 
\begin{lemma}
Let $\fK_\bc=\l^2_+\ominus \l^2_{+,\bc}$. This is a one dimensional space, i.e.,
$\fK_\bc=\{\k_{\bc}\}$. Moreover, we can choose
\begin{equation}\label{defkap}
\k_{\bc}=(J-\bc)^{-1}(e_{-1}a(0)\sin\varphi+e_0\cos\varphi),
\end{equation}
where
\begin{equation}\label{defkap2}
\tan\varphi=\tan\varphi(\bc)=r_+(\bc),\ -\frac{\pi}{2}<\varphi\le \frac{\pi}{2},
\end{equation}
including $\varphi=\frac{\pi} 2$ if $r_+(\bc)=\infty$, that is,
 $\bc$ is a pole of this function. 
In this notations
 \begin{equation}\label{defkap3}
\|\k_\bc\|^2=\frac{r'_+(\bc)}{1+r_+(\bc)^2}=\varphi'(\bc).
 \end{equation}
 Moreover, the following two-sided estimation holds
  \begin{equation}\label{defkap4}
 \frac{\min\{a(0)^2,1\}}{(|\bc|+\|J\|)^2}\le\varphi'(\bc)\le  \frac{\max\{a(0)^2,1\}}{\dist^2(\bc,\sigma(J))}.
\end{equation}
 \end{lemma}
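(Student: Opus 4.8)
The plan is to identify $\fK_\bc$ explicitly as a one–dimensional space, exhibit the generator \eqref{defkap}, and then read off both the norm \eqref{defkap3} and the two–sided bound \eqref{defkap4} from elementary resolvent estimates. First I would check that $\l^2_{+,\bc}$ is a closed subspace of $\l^2_+$, so that $\fK_\bc$ is well defined. If $f\in\l^2_+$ and $\langle f,e_0\rangle=0$, then $f$ lies in the closed span of $\{e_n\}_{n\ge1}$; since $J$ is tridiagonal, $(J-\bc)e_n\in\l^2_+$ for every $n\ge1$, hence $(J-\bc)f\in\l^2_+$. Because $\bc\notin\sigma(J)$, the operator $J-\bc$ is a homeomorphism of $\l^2$, so it maps the closed span of $\{e_n\}_{n\ge1}$ onto a closed subspace, which is the desired $\l^2_{+,\bc}\subseteq\l^2_+$.

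Next I would describe $\fK_\bc$ by duality. For $\k\in\l^2_+$, self–adjointness of $J$ gives $\k\perp\l^2_{+,\bc}$ iff $\langle(J-\bc)\k,e_n\rangle=0$ for all $n\ge1$. For $\k\in\l^2_+$ the vector $(J-\bc)\k$ can have a nonzero component only at $e_n$, $n\ge-1$, the one at $e_{-1}$ being $a(0)\langle\k,e_0\rangle$; thus the condition says precisely $(J-\bc)\k\in\mathrm{span}\{e_{-1},e_0\}$. Since $J$ and $J_+$ agree on $\{e_n\}_{n\ge1}$, for $\k\in\l^2_+$ this is equivalent to $(J_+-\bc)\k=\gamma e_0$ for some scalar $\gamma$, i.e. $\fK_\bc$ is the preimage of the line $\mathrm{span}\{e_0\}$ under $J_+-\bc$. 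If $\bc\notin\sigma(J_+)$ this preimage is $\{\gamma(J_+-\bc)^{-1}e_0:\gamma\in\bbC\}$, one–dimensional; if $\bc\in\sigma(J_+)$ (equivalently $r_+$ has a pole at $\bc$), then every eigenvector $u$ has $u_0\neq0$, so $e_0\notin\mathrm{Ran}(J_+-\bc)=\ker(J_+-\bc)^{\perp}$, forcing $\gamma=0$ and $\fK_\bc=\ker(J_+-\bc)$, again one–dimensional.

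To pin down the normalization I would compute $(J-\bc)\k$ for $\k=\gamma(J_+-\bc)^{-1}e_0$: its $e_0$–component is $\gamma$ and its $e_{-1}$–component is $a(0)\langle\k,e_0\rangle=a(0)\gamma\,r_+(\bc)$, so $(J-\bc)\k=\gamma\bigl(a(0)r_+(\bc)e_{-1}+e_0\bigr)$. Writing $r_+(\bc)=\tan\varphi$ with $-\tfrac\pi2<\varphi\le\tfrac\pi2$ and taking $\gamma=\cos\varphi$ gives exactly the generator \eqref{defkap}; the pole case is $\varphi=\tfrac\pi2$, where $(J-\bc)\k_\bc=a(0)e_{-1}$ and $\k_\bc$ is the eigenvector found above. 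For the norm, away from poles $\k_\bc=\cos\varphi\,(J_+-\bc)^{-1}e_0$, so
\begin{equation*}
\|\k_\bc\|^2=\cos^2\varphi\,\langle(J_+-\bc)^{-2}e_0,e_0\rangle=\cos^2\varphi\;r_+'(\bc),
\end{equation*}
using $r_+'(\bc)=\langle(J_+-\bc)^{-2}e_0,e_0\rangle$. Since $\cos^2\varphi=(1+r_+(\bc)^2)^{-1}$ and differentiating $\tan\varphi=r_+(\bc)$ gives $\varphi'(\bc)=\cos^2\varphi\,r_+'(\bc)$, this is \eqref{defkap3}, the pole case following by continuity.

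Finally, for \eqref{defkap4} I would use the single representation $\k_\bc=(J-\bc)^{-1}v$ with $v=a(0)\sin\varphi\,e_{-1}+\cos\varphi\,e_0$, valid uniformly in $\bc$. Then $\|v\|^2=a(0)^2\sin^2\varphi+\cos^2\varphi$ lies between $\min\{a(0)^2,1\}$ and $\max\{a(0)^2,1\}$, while self–adjointness of $J$ yields $\|(J-\bc)^{-1}\|=\dist(\bc,\sigma(J))^{-1}$ together with $\|(J-\bc)^{-1}v\|\ge\|v\|/\|J-\bc\|\ge\|v\|/(|\bc|+\|J\|)$; combining these produces both inequalities. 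The one place I would be careful is precisely the pole case $r_+(\bc)=\infty$: there $(J_+-\bc)^{-1}$ is unavailable, so one–dimensionality must be read off from the eigenspace and the identity $\|\k_\bc\|^2=\varphi'(\bc)$ obtained by a continuity argument, whereas the two–sided bound, resting only on $\bc\notin\sigma(J)$, already covers this case directly.
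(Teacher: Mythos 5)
Your proposal is correct. For the identification of $\fK_\bc$, the generator \eqref{defkap} and the norm identity \eqref{defkap3}, you follow essentially the same route as the paper, just with the one--dimensionality argument (duality against $\l^2_{+,\bc}$, the dichotomy $\bc\notin\sigma(J_+)$ versus $\bc\in\sigma(J_+)$ with the nonvanishing of the first component of the eigenvector) spelled out where the paper simply asserts $\k_\bc=(J_+-\bc)^{-1}e_0\cos\varphi$, respectively collinearity with the eigenvector. Where you genuinely diverge is \eqref{defkap4}: the paper bounds $r_+'(\bc)$ by sandwiching the derivative of the $2\times2$ matrix resolvent function $R(z)=\cE^*(J-z)^{-1}\cE$ on the cyclic subspace $\mathrm{span}\{e_{-1},e_0\}$ between $(|\bc|+\|J\|)^{-2}I$ and $\dist^{-2}(\bc,\sigma(J))I$ and then extracting the $(2,2)$ entry, which yields $\frac{1+a(0)^2r_+(\bc)^2}{(|\bc|+\|J\|)^2}\le r_+'(\bc)\le\frac{1+a(0)^2r_+(\bc)^2}{\dist^2(\bc,\sigma(J))}$ and hence \eqref{defkap4} after dividing by $1+r_+(\bc)^2$. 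You instead exploit the representation \eqref{defkap} directly: since $\varphi'(\bc)=\|\k_\bc\|^2=\|(J-\bc)^{-1}v\|^2$ with $\|v\|^2=a(0)^2\sin^2\varphi+\cos^2\varphi$ a convex combination of $a(0)^2$ and $1$, the two bounds follow from $\|(J-\bc)^{-1}\|=\dist(\bc,\sigma(J))^{-1}$ and $\|(J-\bc)^{-1}v\|\ge\|v\|/\|J-\bc\|$. This is more elementary, avoids the matrix Herglotz function and the block formula for $R(z)$ entirely, and has the added advantage of covering the pole case $\varphi=\pi/2$ uniformly, since it rests only on $\bc\notin\sigma(J)$; the price is that it gives no separate information on $r_+'(\bc)$ itself, which the paper's computation does produce as a by-product. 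Both arguments are valid and yield exactly \eqref{defkap4}.
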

 \begin{proof} If $r_+(\bc)\not=\infty$, we have $\k_\bc=(J_+-c)^{-1}e_0\cos\varphi$.
 Otherwise $\k_{\bc}$ is collinear to the corresponding eigenvector of $J_+$. These prove
 \eqref{defkap}, \eqref{defkap2}.
 
 Further, we have
 $$
 \|\k_{\bc}\|^2=\langle (J_+-\bc)^{-2}e_0,e_0 \rangle\cos^2\varphi=\frac{r'_+(\bc)}{1+r_+(\bc)^2},
 $$
 which proves \eqref{defkap3}.
 Now, let
$$
R(z):=\cE^*(J-z)^{-1}\cE=\int\frac{d\sigma}{x-z}.
$$
Since $\int d\sigma=I$, we have
$$
\frac{1}{(|\bc|+\|J\|)^2}
\le R'(\bc)=\int\frac{d\sigma}{(x-\bc)^2}\le \frac{1}{\dist^2(\bc,\sigma(J))}.
$$
Recall that
$$
R(z)=\begin{bmatrix}
r_{-}(z)^{-1}& a(0)\\
a(0)& r_+(z)^{-1}
\end{bmatrix}.
$$
That is,
$$
\frac{1}{(|\bc|+\|J\|)^2}
\le R(\bc)\begin{bmatrix}
\frac{r_-'(\bc)}{r_-(\bc)^2}&0
\\0&\frac{r_+'(\bc)}{r_+(\bc)^2}
\end{bmatrix}R(\bc)\le \frac{1}{\dist^2(\bc,\sigma(J))},
$$
or
$$
\frac{R(\bc)^{-2}}{(|\bc|+\|J\|)^2}
\le
\begin{bmatrix}
\frac{r_-'(\bc)}{r_-(\bc)^2}&0
\\0&\frac{r_+'(\bc)}{r_+(\bc)^2}
\end{bmatrix}\le \frac{R(\bc)^{-2}}{\dist^2(\bc,\sigma(J))}.
$$
Comparing the second entry of the second row of these matrices, we get
$$
\frac{1+a_0^2r_+(\bc)^2}{(|\bc|+\|J\|)^2}
\le
r_+'(\bc)
\le \frac{1+a_0^2r_+(\bc)^2}{\dist^2(\bc,\sigma(J))}.
$$
Thus, \eqref{defkap4} is also proved.
 \end{proof}
 
  Defining $\k_{\bc}$ by \eqref{aort} and \eqref{jort}, we obtain $F^*e_0=\frac{1}{\|\k_{\bc_1}\|}\k_{\bc_1}$.
Therefore,
 $$
 p^{(0)}_0(0)=\langle Ae_{-1},e_0 \rangle=\langle Je_{-1},\frac{\k_{\bc_1}}{\|\k_{\bc_1}\|} \rangle=\frac{a(0)\sin\varphi(\bc_1)}{\varphi'(\bc_1)}.
 $$
 The main estimations are based on the following lemma.
 \begin{lemma}
 Let $\oc J\in J(E)$. In the previous notations, 
 \begin{equation}\label{mlkap1}
\langle (J-\oc J)\k_{\bc},\oc\k_{\bc} \rangle=\sin(\oc\varphi(\bc)-\varphi(\bc)).
\end{equation}
Consequently, there exists $C=C(\sigma(J),\bc)<\infty$ such that
\begin{equation}\label{mlkap2}
|\sin(\oc\varphi(\bc)-\varphi(\bc))|\le C\dist_\eta(J_+,\oc J_+)
\end{equation}
and simultaneously for the  derivatives
\begin{equation}\label{mlkap3}
|(\oc\varphi)^{(m)}(\bc)-\varphi^{(m)}(\bc)|\le C\dist_\eta(J_+,\oc J_+)
\end{equation}
for $m=1,2,3$ and $\eta>|b(\bc)|$.
 \end{lemma}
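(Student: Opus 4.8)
The plan is to reduce every assertion to the half-line, where both $\k_\bc$ and $\oc\k_\bc$ live, and to use the explicit form $\k_\bc=\cos\varphi\,(J_+-\bc)^{-1}e_0$ furnished by the proof of \eqref{defkap}. From this and \eqref{defkap2} one reads off the two relations $(J_+-\bc)\k_\bc=\cos\varphi\,e_0$ and $\k_\bc(0)=\cos\varphi\,r_+(\bc)=\sin\varphi$, together with their counterparts $(\oc J_+-\bc)\oc\k_\bc=\cos\oc\varphi\,e_0$, $\oc\k_\bc(0)=\sin\oc\varphi$ for $\oc J$; the pole case $\varphi=\tfrac\pi2$ is the limit in which $\k_\bc$ is the $J_+$-eigenvector and both relations survive. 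Since $\k_\bc,\oc\k_\bc$ are supported on $n\ge0$ and the only entry of $J$ coupling the two half-lines in \eqref{ijf2} involves the index $-1$, the two-sided form collapses to $\langle(J-\oc J)\k_\bc,\oc\k_\bc\rangle=\langle(J_+-\oc J_+)\k_\bc,\oc\k_\bc\rangle$. Writing $\langle(J_+-\oc J_+)\k_\bc,\oc\k_\bc\rangle=\langle(J_+-\bc)\k_\bc,\oc\k_\bc\rangle-\langle\k_\bc,(\oc J_+-\bc)\oc\k_\bc\rangle$ (self-adjointness of $\oc J_+$ and reality of $\bc$ cancel the common term $\bc\langle\k_\bc,\oc\k_\bc\rangle$) and substituting the two relations above yields $\cos\varphi\,\sin\oc\varphi-\cos\oc\varphi\,\sin\varphi=\sin(\oc\varphi-\varphi)$, which is \eqref{mlkap1}.

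For \eqref{mlkap2} I would estimate this bilinear form directly. The matrix $J_+-\oc J_+$ is tridiagonal with entries $a(n)-\oc a(n)$ and $b(n)-\oc b(n)$, while both vectors obey $|\k_\bc(n)|\le C\eta^n$ for any $\eta>|b(\bc)|$, the Green-function decay rate at the interior gap point $\bc$ (cf.\ the bound \eqref{estF}). Hence $|\langle(J_+-\oc J_+)\k_\bc,\oc\k_\bc\rangle|\le C\sum_{n\ge0}(|a(n)-\oc a(n)|+|b(n)-\oc b(n)|)\eta^{2n}$, and splitting this sum by Cauchy--Schwarz against $\sum_n\eta^{2n}<\infty$ bounds it by $C\dist_\eta(J_+,\oc J_+)$ in the sense of \eqref{eq3}; combined with \eqref{mlkap1} this is \eqref{mlkap2}.

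To obtain the derivative bounds \eqref{mlkap3} I would move to a complex neighborhood. The vector $\k_{\bc'}=\cos\varphi(\bc')\,(J_+-\bc')^{-1}e_0$ continues holomorphically in $\bc'$ to a fixed disk $\{|\bc'-\bc|\le\rho\}$ disjoint from $\sigma(J_+)$: at a Dirichlet pole the simple pole of the resolvent is exactly cancelled by the simple zero of $\cos\varphi$, so the singularity is removable, and the bound $|\k_{\bc'}(n)|\le C\eta^n$ persists uniformly on the disk; likewise for $\oc\k_{\bc'}$. Thus $D(\bc'):=\langle(J_+-\oc J_+)\k_{\bc'},\oc\k_{\bc'}\rangle=\sin(\oc\varphi(\bc')-\varphi(\bc'))$ is holomorphic near $\bc$ and, by the estimate of the preceding paragraph applied along the circle $|\bc'-\bc|=\rho$, satisfies $|D|\le C\dist_\eta(J_+,\oc J_+)$ there; Cauchy's inequality then gives $|D^{(m)}(\bc)|\le C_m\dist_\eta(J_+,\oc J_+)$. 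It remains to read off $\delta^{(m)}$, $\delta:=\oc\varphi-\varphi$. If $\dist_\eta$ exceeds a threshold depending only on $(\sigma(J),\bc,E)$, the claim is trivial because $\varphi^{(m)}(\bc)$ and $\oc\varphi^{(m)}(\bc)$ are individually bounded (the latter uniformly over the compact torus $J(E)$). Otherwise $|D|=|\sin\delta|$ stays small on the disk, so $\delta=\arcsin D$ on the correct branch and $\delta^{(m)}$ is a universal polynomial in $D,\dots,D^{(m)}$ times powers of $(1-D^2)^{-1/2}$; the prefactor is bounded and every monomial carries a factor $D^{(j)}$ with $j\ge1$, so $|\delta^{(m)}(\bc)|\le C\dist_\eta(J_+,\oc J_+)$ for $m=1,2,3$, which is \eqref{mlkap3}.

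The algebraic identity and the Cauchy--Schwarz estimate are routine; the genuinely delicate points are the holomorphy and uniform geometric decay of $\k_{\bc'}$ through the Dirichlet poles on a fixed complex disk, and the branch bookkeeping used to pass from $\sin\delta$ to $\delta^{(m)}$ --- which is precisely why only the derivatives $m\ge1$, insensitive to the global $\pi$-ambiguity of $\oc\varphi-\varphi$, are asserted.
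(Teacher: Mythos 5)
Your derivation of \eqref{mlkap1} is correct and is essentially the paper's computation: split off $(J-\bc)\k_\bc=e_{-1}a(0)\sin\varphi+e_0\cos\varphi$ and use $\k_\bc(0)=\sin\varphi$, $\oc\k_\bc(0)=\sin\oc\varphi$ (the paper obtains the same numbers by writing $e_0=\tfrac{1}{\oc a(0)}P_+(\oc J-\bc)e_{-1}$). The first real problem is in \eqref{mlkap2}: you estimate the bilinear form by imposing the pointwise decay $|\k_\bc(n)|\le C\eta^n$, $\eta>|b(\bc)|$, on \emph{both} vectors. For $\oc\k_\bc$ this is legitimate, since $\oc J=J(\alpha)$ and $\oc\k_\bc/\|\oc\k_\bc\|$ is the normalized reproducing kernel $k^\alpha_{\zeta_\bc}/\|k^\alpha_{\zeta_\bc}\|$, whose coefficients obey \eqref{estF}. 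For the \emph{general} $J$, however, nothing gives decay of $(J_+-\bc)^{-1}e_0$ at a rate beating an $\eta$ arbitrarily close to $|b(\bc)|$: Combes--Thomas only yields some $\sigma(J)$-dependent rate, and the sharp rate would require a regularity argument for $\sigma_+$ that you do not supply and that is not needed. The paper's fix is the asymmetric Cauchy--Schwarz $|\langle(J-\oc J)\k_\bc,\oc\k_\bc\rangle|\le\|\k_\bc\|\,\|(J-\oc J)\oc\k_\bc\|$, with $\|\k_\bc\|^2=\varphi'(\bc)$ controlled by \eqref{defkap4}, so that all the geometric decay is loaded onto $\oc\k_\bc$ alone.

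The more serious gap is in \eqref{mlkap3}. Your Cauchy-estimate argument needs $D(\bc')=\sin(\oc\varphi(\bc')-\varphi(\bc'))$ to be holomorphic, with uniform bounds, on a disk whose radius depends only on $(\sigma(J),\bc)$. But $\cos\varphi(\bc')=(1+r_+(\bc')^2)^{-1/2}$, so $D=(\oc r_+-r_+)\bigl((1+r_+^2)(1+\oc r_+^2)\bigr)^{-1/2}$ has branch points where $r_+(\bc')=\pm i$. Near a real pole $\bc_0$ of $r_+$ (a Dirichlet eigenvalue of $J_+$, which can sit arbitrarily close to $\bc$ and is \emph{not} controlled by $\sigma(J)$) with mass $m$, these branch points lie at distance $\approx m$ from the real axis; since $m$ can be arbitrarily small, no fixed disk of holomorphy exists, and Cauchy's inequality with a controlled constant is unavailable. (There is also the minor point that for complex $\bc'$ the sesquilinear pairing must be replaced by a bilinear one to have any holomorphy.) The paper stays on the real axis: differentiate \eqref{mlkap1} in $\bc$ to get $\cos(\oc\varphi-\varphi)\bigl((\oc\varphi)'-\varphi'\bigr)=\langle(J-\oc J)\k_\bc',\oc\k_\bc\rangle+\langle(J-\oc J)\k_\bc,(\oc\k_\bc)'\rangle$, bound $|\cos(\oc\varphi-\varphi)|$ from below via \eqref{mlkap2}, and estimate the right-hand side exactly as in the previous step, using the explicit form of $\k_\bc'$ and the fact that the coefficients of $(k^\alpha_{\zeta_\bc})'$ also satisfy \eqref{estF}; the cases $m=2,3$ follow by differentiating again. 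You would need to adopt this (or justify a branch-free holomorphic substitute for $D$) for your argument to close.
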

 \begin{proof}
 We have
 $$
 \langle (J-\oc J)\k_{\bc},\oc\k_{\bc} \rangle=
 \langle (J-\bc)\k_{\bc},\oc\k_{\bc} \rangle-
 \langle (\oc J-\bc)\k_{\bc},\oc\k_{\bc} \rangle.
 $$
 We simplify the first term
 $$
  \langle e_{-1}a(0)\sin\varphi+e_0 \cos\varphi,\oc\k_{\bc} \rangle=
  \frac{1}{\oc a(0)}\langle (\oc J-c)e_{-1},\oc\k_{\bc} \rangle  \cos\varphi=\sin\oc\varphi\cos\varphi.
 $$
 Thus,
 $$
 \langle (J-\oc J)\k_{\bc},\oc\k_{\bc} \rangle=\sin\oc\varphi\cos\varphi
 -\sin\varphi\cos\oc \varphi
 $$
 and \eqref{mlkap1} is proved. 
 
  The upper estimation in \eqref{defkap4}
in combination with \eqref{mlkap1},
 \eqref{defkap3} implies
 \begin{equation}\label{mlkap7}
|\sin(\oc\varphi(\bc)-\varphi(\bc))|\le \sqrt{\varphi'(\bc)(\oc \varphi)'(\bc)}\frac{\| (J-\oc J)\oc\k_{\bc} \|}{\|\oc \k_{\bc}\|}
\le C\frac{\| (J-\oc J)\oc\k_{\bc} \|}{\|\oc \k_{\bc}\|}.
\end{equation}
Now, the vector $\frac 1{\|\oc\k_\bc\|}\oc\k_\bc$ in the functional model for $\oc J=J(\alpha)$ corresponds to the normalized reproducing kernel $\frac{1}{\| k^\alpha_{\zeta_\bc}\|} k^\alpha_{\zeta_\bc}$. The components of this vector were estimated in \eqref{estF}. Thus,
$$
\frac 1{\|\oc \k_{\bc}\|}{\| (J-\oc J)\oc\k_{\bc} \|}\le C(E)\dist_\eta(J_+,\oc J_+), \quad |b(\bc)|<\eta<1,
$$
and \eqref{mlkap7} implies \eqref{mlkap2}.

To get \eqref{mlkap3} we differentiate \eqref{mlkap1} with respect to $\bc$
\begin{equation}\label{firstde}
\cos(\oc\varphi(\bc)-\varphi(\bc))((\oc\varphi)'(\bc)-\varphi'(\bc))=\langle (J-\oc J)\k'_\bc,\oc\k_\bc \rangle+
\langle (J-\oc J)\k_\bc,(\oc\k_\bc)' \rangle.
\end{equation}
Since $\sin(\oc\varphi(\bc)-\varphi(\bc))$ was estimated from above, we have a uniform estimation for 
$|\cos(\oc\varphi(\bc)-\varphi(\bc))|$ from below. Using \eqref{defkap}, we evaluate $\k'_\bc$. Based on its explicit form and the estimation for $\varphi'_\bc$, we obtain that $\|k'_{\zeta_\bc}\|$ is also bounded by the distance from $\bc$ to $\sigma(J)$. Evidently, the coefficients of $(k^\alpha_{\zeta_\bc})'$ also satisfies \eqref{estF}. Thus,
$$
|(\oc\varphi)'(\bc)-\varphi'(\bc)|=\frac{\|(J-\oc J)\oc\k_c \|\|\k'_\bc\|+
\|(J-\oc J)(\oc\k_c)' \|\|\k_\bc\|}{|\cos(\oc\varphi(\bc)-\varphi(c))|}
$$
implies \eqref{mlkap3}. Taking the second and third derivatives in \eqref{firstde}, we obtain \eqref{mlkap3} for $m=2,3$.
 \end{proof}

\begin{corollary} If $J$ is of the form \eqref{132}-\eqref{133} and $A(n)=\cF^{-1}(S^{-n} J S^n)$,
 then
 \begin{equation}\label{zzz}
\sum_{n=0}^\infty|p^{(0)}_0(n)-p_0(\alpha_n)|^2<\infty, \quad \alpha_n= \sum_{k=0}^n\phi_k-\mu n.
\end{equation}

\end{corollary}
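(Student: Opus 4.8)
The plan is to read off $p^{(0)}_0(n)$ from the identity for $p^{(0)}_0(0)$ established just above, applied to the shifted operator, and then to compare it coefficientwise with the periodic value $p_0(\alpha_n)$. Write $J^{(n)}:=S^{-n}JS^n$, so that $A(n)=\cF^{-1}J^{(n)}$, the coupling constant of $J^{(n)}$ (its $(0,-1)$ entry) equals $a(n)$, and the half-line restriction is $(S^{-n}JS^n)_+$. Applying the formula for $p^{(0)}_0(0)$ to $J^{(n)}$ gives $p^{(0)}_0(n)=\Phi\big(a(n),\varphi_n(\bc_1),\varphi_n'(\bc_1)\big)$, where $\varphi_n$ is the phase \eqref{defkap2} of $(S^{-n}JS^n)_+$ and $\Phi$ is the right-hand side of that formula, a smooth function whose denominator is a positive power of $\varphi_n'(\bc_1)$. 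Running the identical construction on the reflectionless matrix $J(\alpha_n)=\cF A(\alpha_n;\bC)$ produces $p_0(\alpha_n)=\Phi\big(\oc a(\alpha_n),\oc\varphi_n(\bc_1),\oc\varphi_n'(\bc_1)\big)$, with $\oc\varphi_n$ the phase of $J(\alpha_n)_+$ and $\oc a(\alpha_n)=\cA(\alpha_n)^{1/2}$.

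Two of the three arguments of $\Phi$ are controlled immediately. Since $J$ is of the form \eqref{132}, one has $a(n)^2-\oc a(\alpha_n)^2=\epsilon_a(n)$; as $\cA$ is bounded above and away from zero on $\bbR^g/\bbZ^g$ while $\epsilon_a\in\l^2_+$, the coupling constants $a(n),\oc a(\alpha_n)$ are uniformly bounded above and, for all large $n$, away from zero, whence $\{a(n)-\oc a(\alpha_n)\}_{n\ge0}\in\l^2_+$. For the denominators, \eqref{defkap4} applied with the $n$-independent data $\|J^{(n)}\|=\|J\|$ and $\dist(\bc_1,\sigma(J^{(n)}))=\dist(\bc_1,\sigma(J))>0$ shows that $\varphi_n'(\bc_1)$ and $\oc\varphi_n'(\bc_1)$ are bounded above and below uniformly in $n$, so $\Phi$ is Lipschitz in its three arguments on the relevant region.

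It then remains to estimate the phases. Fixing $\eta$ with $|b(\bc_1)|<\eta<1$ and applying \eqref{mlkap2} and \eqref{mlkap3} (with $m=1$) to the pair $(J^{(n)},J(\alpha_n))$, whose constants depend only on $\sigma(J)$, $\bc_1$ and the uniform kernel bound \eqref{estF}, yields
$$
|\sin(\oc\varphi_n(\bc_1)-\varphi_n(\bc_1))|+|\oc\varphi_n'(\bc_1)-\varphi_n'(\bc_1)|\le C\,\dist_\eta\big((S^{-n}JS^n)_+,J(\alpha_n)_+\big).
$$
By Lemma \ref{lem72} the right-hand side tends to $0$, so for all large $n$ the angle difference lies in a region where $\cos(\oc\varphi_n-\varphi_n)$ is bounded below, and hence $|\oc\varphi_n(\bc_1)-\varphi_n(\bc_1)|\le C'\dist_\eta$ as well. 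Combining this with the Lipschitz property of $\Phi$,
$$
|p^{(0)}_0(n)-p_0(\alpha_n)|\le C\big(|a(n)-\oc a(\alpha_n)|+|\varphi_n(\bc_1)-\oc\varphi_n(\bc_1)|+|\varphi_n'(\bc_1)-\oc\varphi_n'(\bc_1)|\big).
$$
Squaring and summing, the first term is square-summable by the second paragraph, while the remaining two are dominated by $\sum_{n\ge0}\dist^2_\eta((S^{-n}JS^n)_+,J(\alpha_n)_+)$, which is finite by Lemma \ref{lem72}; this is exactly \eqref{zzz}.

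The genuinely delicate point is the uniformity in $n$ of every constant, together with the passage from the bound on $|\sin(\oc\varphi_n-\varphi_n)|$ to a bound on the angle difference itself: the latter needs $\cos(\oc\varphi_n-\varphi_n)$ to stay away from zero, which is automatic once $\dist_\eta$ is small but must simply be absorbed, for the finitely many initial indices, into a finite sum. The same remark covers any finite set of $n$ for which the coupling $a(n)$ happens to be atypically small; such indices contribute only finitely to the series and therefore do not affect square-summability.
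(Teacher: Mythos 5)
Your proposal is correct and follows essentially the same route as the paper: express $p^{(0)}_0(n)$ and $p_0(\alpha_n)$ through the formula $a\sin\varphi(\bc_1)/\varphi'(\bc_1)$, control the difference by $\dist_\eta((S^{-n}JS^n)_+,J(\alpha_n)_+)$ via \eqref{mlkap2}--\eqref{mlkap3} together with the uniform bounds \eqref{defkap4}, and sum using Lemma \ref{lem72}. The paper states this in two lines; your write-up merely makes explicit the uniformity of constants, the Lipschitz dependence on the three arguments, and the handling of finitely many exceptional indices, all of which are consistent with the intended argument.
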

\begin{proof}
Thus, by \eqref{mlkap2}, \eqref{mlkap3} we can estimate the difference
$$
p^{(0)}_0(n)-p_0(\alpha_n)=\frac{a(n)\sin\varphi(\bc_1)}{\varphi'(\bc_1)}-
\frac{\oc a(0)\sin\oc\varphi(\bc_1)}{(\oc\varphi)'(\bc_1)},\quad \oc J=J(\alpha_n),
$$
by means of $\dist((S^{-n}J S^n)_+,J(\alpha_n)_+)$. Due to \eqref{distalpn}, we have \eqref{zzz}.
\end{proof}

\begin{proof}[Finishing the proof of Theorem \ref{mainhy}]
It remains to show that \eqref{132}-\eqref{133} imply \eqref{m29}-\eqref{m31}.

Consider the ordered system of vectors
\begin{equation}\label{mthos}
e_{-1},\k_{\bc_1},\dots,\k_{\bc_g},e_0,\k'_{\bc_1},\dots,\k'_{\bc_g},e_1.
\end{equation}
Let us point out that the orthogonalization of the system
\begin{equation}\label{mthos1}
e_{-1},\oc\k_{\bc_1},\dots,\oc\k_{\bc_g},e_0,(\oc\k_{\bc_1})',\dots,(\oc \k_{\bc_g})',e_1
\end{equation}
leads to the family $\{f^\alpha_j\}_{j=-1}^{2g+2}$, see \eqref{smpbase}, where $\oc J=J(\alpha)$.

To evaluate the Gram-Schmidt matrix of the system \eqref{mthos} we use
$$
\langle \k_{\bc_j},\k_{\bc_m} \rangle=\frac{r_+(\bc_j)-r_+(\bc_m)}{\bc_j-\bc_m}\cos\varphi(\bc_j)\cos\varphi(\bc_m)=
\frac{\sin(\varphi(\bc_j)-\varphi(\bc_m))}{\bc_j-\bc_m}.
$$
Therefore,
$$
\langle \k'_{\bc_j},\k_{\bc_m} \rangle=\frac{\cos(\varphi(\bc_j)-\varphi(\bc_m))}{\bc_j-\bc_m}\varphi'(\bc_j)-
\frac{\sin(\varphi(\bc_j)-\varphi(\bc_m))}{(\bc_j-\bc_m)^2}
$$
and
\begin{eqnarray*}
\langle \k'_{\bc_j},\k'_{\bc_m} \rangle&=&
\frac{\cos(\varphi(\bc_j)-\varphi(\bc_m))}{(\bc_j-\bc_m)^2}(\varphi'(\bc_j)+\varphi'(\bc_m))
\\
&+&
\frac{\sin(\varphi(\bc_j)-\varphi(\bc_m))}{\bc_j-\bc_m}\varphi'(\bc_j)\varphi'(\bc_m)
-2\frac{\sin(\varphi(\bc_j)-\varphi(\bc_m))}{(\bc_j-\bc_m)^3}
\end{eqnarray*}
for $j\not=m$.

Having uniform estimations from below for all Gram-Schmidt determinants of the system \eqref{mthos1},
from  \eqref{mlkap2}, \eqref{mlkap3}, similarly to \eqref{zzz}, we obtain
$$
\sum_{n=0}^\infty|p^{(m)}_j(n)-p_j(\alpha_n)|^2<\infty,
\ \sum_{n=0}^\infty|q^{(m)}_j(n)-q_j(\alpha_n)|^2<\infty,\quad m=0,1,
$$
for all $j=0,\dots,g.$ This implies \eqref{m29}-\eqref{m31}, in particular,
$$
\sum_{n=0}^\infty|p^{(1)}_j(n)-p^{(0)}_j(n)|^2<\infty,\quad\text{for}\ j=0,..,g-1.
$$

\end{proof}
%%%%%%%%%%%%%

%%%%%%%%%%%
%%%%%%%%%%%%

 \bibliographystyle{amsplain}
 \bibliography{lit2}

\providecommand{\bysame}{\leavevmode\hbox to3em{\hrulefill}\thinspace}
\providecommand{\MR}{\relax\ifhmode\unskip\space\fi MR }
% \MRhref is called by the amsart/book/proc definition of \MR.
\providecommand{\MRhref}[2]{%
  \href{http://www.ams.org/mathscinet-getitem?mr=#1}{#2}
}
\providecommand{\href}[2]{#2}
\begin{thebibliography}{10}

\bibitem{ALF}
L.~V. Ahlfors, \emph{{Bounded analytic functions}}, Duke. Math. J. \textbf{14}
  (1947), 1--11.

\bibitem{AKH}
N.~I. Akhiezer, \emph{{A Generalization of a Minimal Problem of
  Korkin-Zolotarev kind}}, Academic Press \textbf{4} (1936), no.~XIII.

\bibitem{Akh60}
\bysame, \emph{{Orthogonal polynomials on several intervals}}, Soviet Math.
  Dokl. (1936), 989--992.

\bibitem{AKHmp}
\bysame, \emph{{The classical moment problem and some related questions in
  analysis}}, Hafner Publishing Co., New York, 1965.

\bibitem{AKHef}
\bysame, \emph{{Elements of the Theory of Elliptic Functions}}, Amer. Math.
  Soc., Providence, 1990.

\bibitem{BD2}
Y.~M. Berezansky and M.~E. Dudkin, \emph{The strong {H}amburger moment problem
  and related direct and inverse spectral problems for block {J}acobi-{L}aurent
  matrices}, Methods Funct. Anal. Topology \textbf{16} (2010), no.~3, 203--241.

\bibitem{jreview}
J.~S. Christiansen, B.~Simon, and M.~Zinchenko, \emph{{Finite gap Jacobi
  matrices: A review.}}, Proc. Sympos. in Pure Math. \textbf{87} (2013),
  87--103.

\bibitem{CD}
M.~J. Cowen and R.~G. Douglas, \emph{{Complex geometry and operator theory}},
  Acta Math \textbf{141} (1978), 187--261.

\bibitem{KSDp}
D.~Damanik, R.~Killip, and B.~and Simon, \emph{{Perturbations of orthogonal
  polynomials with periodic recursion coefficients}}, Annals of Math.
  \textbf{171} (2010), no.~3.

\bibitem{DK}
P.A. Deift and R.~Killip, \emph{{On the absolutely continuous spectrum of
  one-dimensional Schr\"odinger operators with square summable potentials}},
  Comm. Math. Phys. \textbf{203} (1999), 341--347.

\bibitem{DUD}
M.~E. Dudkin, \emph{The inner structure of the {J}acobi-{L}aurent matrix
  related to the strong {H}amburger moment problem}, Methods Funct. Anal.
  Topology \textbf{19} (2013), no.~2, 97--107.

\bibitem{EPY}
B.~Eichinger, F.~Puchhammer, and P.~Yuditskii, \emph{{Jacobi Flow on SMP
  Matrices and Killip-Simon Problem on Two Disjoint Intervals}}, submitted to
  Computational Methods and Function Theory.

\bibitem{Fay}
J.~Fay, \emph{{Theta Functions on Riemann Surfaces}}, Lecture Notes in
  Mathematics, Springer-Verlag, 1970.

\bibitem{GNR}
F.~Gamboa, J.~Nagel, and Rouault A., \emph{{Sum rules via large deviations}},
  arXiv: 1407.1384 (2014).

\bibitem{GZ}
L.~Golinski and A.~Zlatos, \emph{{Coefficients of orthogonal polynomials on the
  unit circle and higher-order Szeg\"o theorems}}, Constr. Approx. \textbf{26}
  (2007), no.~3.

\bibitem{Has}
M.~Hasumi, \emph{{Hardy Classes on Infinitely Connected Riemann Surfaces}},
  Lecture Notes in Math., Springer, 1983.

\bibitem{HN}
E.~Hendriksen and C.~Nijhuis, \emph{Laurent-{J}acobi matrices and the strong
  {H}amburger moment problem}, Proceedings of the {I}nternational {C}onference
  on {R}ational {A}pproximation, {ICRA}99 ({A}ntwerp), vol.~61, 2000,
  pp.~119--132.

\bibitem{JN}
W.~B. Jones and O.~Nj{\aa}stad, \emph{{Orthogonal Laurent polynomials and
  strong moment theory: a survey. Continued fractions and geometric function
  theory}}, J. Comput. Appl. Math. \textbf{105} (1999), no.~1-2.

\bibitem{KS}
R.~Killip and B.~Simon, \emph{{Sum rules for Jacobi matrices and their
  applications to spectral theory}}, Annals of Math. \textbf{158} (2003),
  no.~2.

\bibitem{K2004}
S.~Kupin, \emph{{On a spectral property of Jacobi matrices}}, Proc. Amer. Math.
  Soc. \textbf{132} (2004), no.~5.

\bibitem{LNS}
A.~Laptev, S.~Naboko, and O.~Safronov, \emph{{On new relations between spectral
  properties of Jacobi matrices and their coefficients}}, Comm. Math. Phys.
  \textbf{241} (2003), no.~1.

\bibitem{LU}
M.~Lukic, \emph{{On a conjecture for higher-order Szeg\"o theorems}}, Constr.
  Approx. \textbf{38} (2013), 161--169.

\bibitem{MaT}
V.~Matveev, \emph{{30 years of finite-gap integration theory}}, Phil. Trans. R.
  Soc. A \textbf{366} (2008).

\bibitem{MUM}
D.~Mumford, \emph{{Tata lectures on theta, vol. I, II.}}, MA: Birkh\"auser,
  Boston, 1983.

\bibitem{NPVY}
F.~Nazarov, F.~Peherstorfer, A.~Volberg, and P.~Yuditskii, \emph{{On
  generalized sum rules for Jacobi matrices}}, Int. Math. Res. Not. (2005),
  no.~3, 155--186.

\bibitem{PVY}
F.~Peherstorfer, A.~Volberg, and P.~Yuditskii, \emph{{CMV matrices with
  asymptotically constant coefficients. Szego-Blaschke class, scattering
  theory}}, Journal of Functional Analysis \textbf{256} (2009), 2157--2210.

\bibitem{PY}
F.~Peherstorfer and P.~Yuditskii, \emph{{Asymptotic behaviour of polynomials
  orthonormal on a homogeneous set}}, J. Anal. Math. \textbf{89} (2003),
  113--154.

\bibitem{Pom}
Ch. Pommerenke, \emph{{On the Green's function of Fuchsian groups}}, Ann. Acad.
  Sci. Fenn. \textbf{2} (1976), 409--427.

\bibitem{POT}
V.P. Potapov, \emph{{The Multiplicative Structure of J-contractive Matrix
  Functions}}, American Mathematical Society translations, American
  Mathematical Society, 1960.

\bibitem{REMA11}
C.~Remling, \emph{{The absolutely continuous spectrum of Jacobi matrices}},
  Annals of Math. \textbf{174} (2011), no.~2, 125--171.

\bibitem{2005v1}
B.~Simon, \emph{{Orthogonal polynomials on the unit circle. Part 1. Classical
  theory }}, American Mathematical Society Colloquium Publications, American
  Mathematical Society, Providence, 2005.

\bibitem{2005v2}
\bysame, \emph{{Orthogonal polynomials on the unit circle. Part 2. Spectral
  theory}}, American Mathematical Society Colloquium Publications, American
  Mathematical Society, Providence, 2005.

\bibitem{BS}
\bysame, \emph{{Szeg\"o's Theorem and Its Descendants: Spectral Theory for
  $L^2$ Perturbations of Orthogonal Polynomials}}, Princeton University Press,
  New Jersey, 2011.

\bibitem{SZ}
B.~Simon and A.~Zlatos, \emph{{Higher-order Szeg\"o theorems with two singular
  points}}, J. Approx. Theory \textbf{134} (2005), no.~1, 114--129.

\bibitem{Sim1}
K.~Simonov, \emph{{Orthogonal Matrix Laurent Polynomials}}, Mathematical Notes
  \textbf{79} (2006), no.~2, 291--295.

\bibitem{Sim2}
\bysame, \emph{{Strong matrix moment problem of Hamburger}, methods of
  functional analysis and topology}, Mathematical Notes \textbf{12} (2006),
  no.~2, 183--196.

\bibitem{SY}
M.~Sodin and P.~Yuditskii, \emph{{Almost periodic Jacobi matrices with
  homogeneous spectrum, infinite-dimensional Jacobi inversion, and Hardy spaces
  of character-automorphic functions}}, J. Geom. Anal. \textbf{7} (1997),
  387--435.

\bibitem{GT}
G.~Teschl, \emph{{Jacobi Operators and Completely Integrable Nonlinear
  Lattices}}, Mathematical surveys and monographs, vol.~72, American
  Mathematical Society, Providence.

\bibitem{VY}
A.~Volberg and P.~Yuditskii, \emph{{On the inverse scattering problem for
  Jacobi matrices with the spectrum on an interval, a finite system of
  intervals or a Cantor set of positive length}}, Commun. Math. Phys.
  \textbf{226} (2002), no.~3, 567--605.

\bibitem{vN}
J.~von Neumann, \emph{{Charakterisierung des Spektrums eines
  Integraloperators}}, Actualit\'es Sci. Indust. \textbf{229} (1935).

\bibitem{WID}
H.~Widom, \emph{{$H_p$ sections of vector bundles over Riemann surfaces}}, Ann.
  Math. \textbf{94} (1971), 304--324.

\end{thebibliography}

\bigskip

{Institute for Analysis, Johannes Kepler University Linz,
A-4040 Linz, Austria} 

\noindent\textit{E-mail address}:
{petro.yudytskiy@jku.at,}\\
\textit{E-mail address}:
{benjamin.eichinger@jku.at.}
\end{document}